\numberwithin{equation}{section}
\definecolor{my-black}{rgb}{0,0,0}
\definecolor{my-blue}{rgb}{0,0,0.8}
\definecolor{my-red}{rgb}{0.8,0,0} 
\definecolor{my-green}{rgb}{0,0.5,0}
\theoremstyle{plain} 
\newtheorem{lemma}{Lemma}[section]
\newtheorem{theorem}{Theorem}
\newtheorem{corollary}{Corollary}[section]
\newtheorem{proposition}{Proposition}[section]
\newtheorem{question}{Question} 
\newtheorem*{theorem*}{Theorem} 
\theoremstyle{definition} 
\newtheorem{remark}{Remark}[section]
\newtheorem{definition}{Definition}[section]
\newtheorem{claim}{Claim}[section]
\theoremstyle{remark}
\newtheorem*{remark-non}{Remark}
\DeclareFontFamily{U}{mathx}{}
\DeclareFontShape{U}{mathx}{m}{n}{<-> mathx10}{}
\DeclareSymbolFont{mathx}{U}{mathx}{m}{n}
\DeclareMathAccent{\widehat}{0}{mathx}{"70}
\DeclareMathAccent{\widecheck}{0}{mathx}{"71}
\DeclareMathOperator{\supp}{supp}
\DeclareMathOperator{\sgn}{sgn}
\newcommand{\R}{\mathbb{R}}
\newcommand{\C}{\mathbb{C}}
\newcommand{\N}{\mathbb{N}}
\newcommand{\Z}{\mathbb{Z}}
\newcommand{\avgI}{\sout{1}}
\newcommand{\Fd}{\mathcal{F}_d}
\title[Perturbed lattice crosses and Heisenberg Uniqueness Pairs]{Perturbed lattice crosses\\ and Heisenberg Uniqueness Pairs}
\author{Danylo Radchenko}
\address{Laboratoire Paul Painlevé, University of Lille, France.}
\email{danradchenko@gmail.com} 
\author{Jo{\~a}o P.G. Ramos}
\address{Institute of Mathematics, EPF Lausanne, Switzerland.}
\email{joaopgramos95@gmail.com}
\begin{document}
\begin{abstract}
This work focuses on two questions raised by H. Hedenmalm and A. Montes-Rodr\'iguez \cite{Hedenmalm-Montes-Annals} on Heisenberg Uniqueness Pairs for perturbed lattice crosses. 

The first of them deals with a \emph{complete} characterization of $\beta>0$ for which, for a fixed $\theta \in \R,$ the translated lattice cross $\Lambda_{\beta}^{\theta} = ((\Z + \{\theta\}) \times \{0\}) \cup (\{0\} \times \beta \Z)$ satisfies that $(\Gamma,\Lambda_{\beta}^{\theta})$ is a Heisenberg Uniqueness Pair, where $\Gamma$ is the hyperbola in $\R^2$ with axes as asymptotes. As a matter of fact, we show that $(\Gamma,\Lambda_{\beta}^{\theta})$ is an H.U.P. \emph{if and only if} $\beta \le 1$, confirming a prediction made in \cite{Hedenmalm-Montes-Annals}.  

Furthermore, under modified decay conditions on the measures under consideration, we are able to prove \emph{sharp} results for when a perturbed lattice cross $\Lambda_{\bf A,B}$ is such that $(\Gamma,\Lambda_{\bf A,B})$ is an H.U.P. In particular, under such decay conditions, this solves another question posed by H. Hedenmalm and A. Montes-Rodr\'iguez. 

Our techniques run through the analysis of the action of the operator which maps the Fourier transform of an $L^1$ function~$\psi$ to the Fourier transform of $t^{-2} \psi(1/t)$. In other words, we analyze the operator taking the restriction to the $x$-axis of a solution $u$ to the Klein-Gordon equation to its restriction to the $y$-axis. This operator turns out to be related to the action of the Four-dimensional Fourier transform on radial functions, which enables us to use the philosophical framework and techniques of discrete uncertainty principles for the Fourier transform. 
\end{abstract}

\maketitle 

\section{Introduction}

\subsection{Historical Background} Let $f:\R \to \C$ be a measurable, square-integrable function. A crucial question in Fourier analysis, with consequences in fields ranging from physics to signal processing is: how well can one recover the information of such a function $f$ and its Fourier transform~$\widehat{f}$, given one only has \emph{partial} access to such information? 

As a first partial answer to such a question, one may invoke the classical \emph{Heisenberg uncertainty principle}. It guarantees that any function $f \in L^2$ cannot be too concentrated in space, unless it is the case that its Fourier transform $\widehat{f}$ is `spread out' in frequency: in particular, the mass of any such $f$ cannot be `overwhelmingly concentrated' in a space ball of small radius around any $x_0 \in \R$, while at the same time the mass of $\widehat{f}$ being concentrated in another ball of small radius around $\xi_0 \in \R$. 

As striking and elegant as such a principle may seem, it represents only the beginning of a rich literature on the subject of uncertainty principles. Indeed, several results in the literature have been of instrumental nature in further understanding the relationship between space-frequency concentration and recovery of functions. 

We highlight, in particular: (i) \emph{the Shannon-Whittaker formula} \cite{Shannon,Whittaker}, which states that, for a function $f \in L^2$ whose Fourier transform is compactly supported on a compact interval, we may recover it by its values from a certain rescaling of the set of integers. This implies that, if a Fourier transform of a compactly supported function vanishes on certain rescaling of the integers, then it must \emph{vanish identically}; (ii) \emph{Hardy's Uncertainty Principle} \cite{HardyUP}, which shows that, if a function and its Fourier transform are pointwise bounded by a multiple of a certain low-variance Gaussian, then it must \emph{vanish identically}; (iii) and finally, \emph{the Amrein-Berthier-Benedicks theorem} \cite{Amrein-Berthier-1,Amrein-Berthier,Benedicks}, which states that, for any two measurable sets $A,B\subset \R$ of \emph{finite} measure, a function cannot have support in $A$ with its Fourier transform supported in $B$, unless it is \emph{identically zero}. 

In this manuscript, we shall be interested in uncertainty principles of a similar flavour, but which are related to a certain given \emph{partial differential equation}. In fact, all of the results mentioned above may be interpreted as suitable uniqueness results for the \emph{harmonic oscillator}: if we define, for a fixed $f \in L^2$, the function $\Phi_f(x,t)$ as being a solution to 
\begin{equation*}
        i\,\partial_t \Phi_f  = (-\Delta + \pi^2 |x|^2) \Phi_f, \text{ for } (x,t) \in \R \times \R, 
\end{equation*}
with $\Phi_f(x,0)  = f(x)  \text{ for } x \in \R,$ then one has that $\Phi_f(x,1/8)$ is equal to (a constant multiple of) the Fourier transform of $f$. Hence, the uncertainty-flavoured results mentioned above translate naturally as results about \emph{uniqueness of solutions} to the time-dependent harmonic oscillator above. 

In line with these results, we highlight two main lines of related work. The first of them is dedicated to extending the philosophical view of uncertainty principles as uniqueness results for certain partial differential equations as a more concrete device. Here, we mention, for instance, the celebrated works by L. Escauriaza, C. Kenig, C. Ponce and L. Vega \cite{Escauriaza2006,Escauriaza20081,Escauriaza2008,Escauriaza2010,Escauriaza2010sharp}, where the authors prove a sharp version of Hardy's uncertainty principle for general Schr\"odinger equations with a potential. In that same line of work, we also highlight the following articles \cite{Kehle-Ramos,Goncalves-Ramos,Kulikov-Oliveira-Ramos} and the references therein. 

The second line of work mentioned above is the main focus of this manuscript. In order to describe it, let first $\mu$ denote a Borel measure on $\R^2$. Suppose, moreover, that $\supp(\mu) \subset \Gamma_0$, where $\Gamma_0$ is some smooth curve in $\R^2$. A set $\Lambda\subseteq\R^2$ is said to form a \emph{Heisenberg Uniqueness Pair} (or H.U.P., for shortness) with $\Gamma_0$ if, whenever $\mu$ above satisfies $\widehat{\mu}|_{\Lambda} = 0$, then we must have $\mu \equiv 0$. 

This definition as such may seem unrelated to the topic at hand of uniqueness results for partial differential equations, but, as a matter of fact, in most cases of curves $\Gamma_0$ explored in the literature so far this relationship is not only evident but also crucial. Indeed, we highlight here three main cases: 

\vspace{2mm}

\noindent I. \emph{The parabola $\Gamma = \{ (x,y) \colon y = x^2\}$.} In this case, any measure $\mu$ supported on $\Gamma$ yields through Fourier transform a solution to the free \emph{Schr\"odinger equation} $
    \partial_t \widehat{\mu} + \frac{i}{\pi} \partial_x^2 \widehat{\mu} = 0;$ 
    
    \vspace{2mm}
    
\noindent II. \emph{The circle $\Gamma = \{(x,y) \colon x^2 + y^2 =1\}$.} Here, one readily obtains that $\widehat{\mu}$ satisfies the \emph{Helmholtz equation} $\Delta \, \widehat{\mu} = \pi^2 \widehat{\mu};$ 

    \vspace{2mm}

\noindent III. \emph{The hyperbola $\Gamma = \{ (x,y) \colon xy = 1\}$.} Finally, here one has that, if $\supp(\mu) \subset \Gamma$, then $\widehat{\mu}$ satisfies the \emph{Klein-Gordon equation} $
    \partial_x \partial_y \widehat{\mu} + \pi^2 \widehat{\mu} = 0.$

    \vspace{2mm}

Note that, in light of these considerations, the first line of work described above \cite{Escauriaza2006,Escauriaza2008,Escauriaza20081,Escauriaza2010,Escauriaza2010sharp} is directly related to the concept of Heisenberg Uniqueness Pairs for the Schr\"odinger equation. In this work, on the other hand, we shall focus on the third case, in which Heisenberg Uniqueness Pairs are directly related to \emph{uniqueness sets} for the Klein-Gordon equation. We refer the reader to \cite{Escauriaza2008,Grochenig-Jaming,Aingeru-etal,Jaming-Kellay,Lev,Sjolin1,Sjolin2,Vieli1,Ghobber-Jaming} for references in the directions of the first two other instances described above. 

In spite of the fact that the definition of Heisenberg Uniqueness Pairs becomes natural in the context introduced above, it seems, to the best of our knowledge, that it has first been explicitly introduced only recently in the seminal work of H.~Hedenmalm and A.~Montes-Rodriguez~\cite{Hedenmalm-Montes-Annals}. There, besides introducing the concept and discussing some basic examples similar to the ones above, they \emph{completely resolve} the question of determining all $\alpha,\beta > 0$ such that the pair $(\Gamma,\Lambda_{\alpha,\beta})$ is a Heisenberg Uniqueness Pair, where $\Gamma$ denotes the hyperbola $\Gamma = \{(x,y) \in \R^2 \colon xy = 1\},$ and $\Lambda_{\alpha,\beta} = ((\alpha \Z) \times \{0\}) \cup (\{0\} \times (\beta \Z))$ denotes the \emph{lattice cross} with dilation parameters $\alpha,\beta.$ 

More specifically, they showed that $(\Gamma,\Lambda_{\alpha,\beta})$ is an H.U.P. if and only if $\alpha \beta \le 1.$ Their result, in addition to possessing a beautiful proof using elements from dynamical systems in novel ways, drew new connections between fields in analysis and raised several important questions in the direction of Fourier uncertainty principles. Effectively, several subsequent works further exploited the connections between such problems and the Klein-Gordon equation such as \cite{Hedenmalm-Monte-Klein-Gordon, CHMR, Hedenmalm-Montes-Hinfty,Bakan-Hedenmalm-Montes-Radchenko-Viazovska,BHMRRV-second,Ghosh-Giri}, and, as recently highlighted \cite{Ramos-Stoller,Bakan-Hedenmalm-Montes-Radchenko-Viazovska,BHMRRV-second}, Heisenberg Uniqueness Pairs for the hyperbola turn out to have a connection to recent \emph{Fourier interpolation results}, such as the interpolation formula due to the first author and M. Viazovska \cite{Radchenko-Viazovska}, which possesses a particularly important connection to the solution of the sphere packing problem in dimensions 8 and 24 \cite{CKMRV19,Viazovska,cohn2022universal}.  

\subsection{Main results} In this manuscript, we shall focus on a question initially raised by H. Hedenmalm and A. Montes-Rodr\'iguez: 

\begin{question}[Open~problem~(a) in \cite{Hedenmalm-Montes-Annals}]\label{q:Hedenmalm-Montes-original} 
If one defines the \emph{translated lattice crosses} 

\begin{equation}\label{eq:lattice-cross}
\Lambda_{\beta}^{\theta} = ((\Z + \{\theta\}) \times \{0\}) \cup (\{0\} \times \beta \Z),
\end{equation}
for which $\beta > 0$ is $(\Gamma,\Lambda_{\beta}^{\theta})$ a Heisenberg Uniqueness Pair?  

Moreover, if one considers suitable perturbations $\tilde{\Lambda} \subset \left( \R \times \{0\} \right) \cup \left( \{0\} \times \R \right)$ of lattice crosses, when is $(\Gamma,\tilde{\Lambda})$ a \emph{H.U.P.}? 
\end{question} 

The authors conjecture in \cite{Hedenmalm-Montes-Annals} that the methods developed there could be adapted to prove results for the first of those questions. On the other hand, the perturbative regime suggested by them seemed to require new ideas outside of the scope of their techniques, and it was left open. Regarding the latter question, we note that the only result that we are aware of in this direction was by the second author and M. Stoller \cite{Ramos-Stoller}, which showed that, at least for certain smooth classes of measures, it is indeed the case that one is able to perturb $\Lambda_{\alpha,\beta}$ slightly and still obtain positive results in the case when $\alpha \cdot \beta = 1$. 

Regarding the first question on lattice crosses translated by a fixed quantity $\theta \in \R$, D. Giri and R. Rawat \cite{Giri-Rawat} were the first ones to consider it explicitly after the original comment in \cite{Hedenmalm-Monte-Klein-Gordon}. In their manuscript, it was first claimed that $(\Gamma,\Lambda_{\beta}^{\theta})$ is a Heisenberg uniqueness pair if and only if $\beta \le p.$ 

Although the examples constructed in \cite{Giri-Rawat} show that $\beta \le p$ is a necessary condition, the ``if'' part of this result turned out to be false, as noted by the authors themselves in \cite{Giri-Rawat-corrigendum}, due to a mistake in a computation undertaken on \cite[page~8]{Giri-Rawat}, while defining their main operators in analogy to the original work of Hedenmalm and Montes-Rodr\'iguez. 

The main results in this manuscript are inspired by and provide answers to Question \ref{q:Hedenmalm-Montes-original}. Indeed, the first result we highlight here deals with the context of translated lattice crosses. Effectively, we are able to show that, independently of how $\theta \in \R$ is chosen, $(\Gamma,\Lambda_{\beta}^{\theta})$  is a Heisenberg Uniqueness Pair (H.U.P.) \emph{if and only if} $\beta \le 1$, settling hence the first part of Question \ref{q:Hedenmalm-Montes-original}. 

\begin{theorem}\label{thm:main} Let $\Gamma = \{(x,y) \in \R^2 \colon xy=1\}$ denote the hyperbola, and consider the translated lattice crosses $\Lambda_{\beta}^{\theta}$ as defined in \eqref{eq:lattice-cross}, for some arbitrary $\theta \in \R$. Then $(\Gamma,\Lambda_{\beta}^{\theta})$ is a Heisenberg Uniqueness Pair if and only if $\beta \le 1.$ 
\end{theorem}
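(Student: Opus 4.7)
The plan is to first perform the standard reduction to a periodization problem. Parametrizing a measure $\mu$ on the hyperbola by $d\mu = f(t)\, dt$ via $t \mapsto (t, 1/t)$ on each branch, one computes
\[
\widehat{\mu}(\xi, \eta) = \int_\R f(t)\, e^{-2\pi i(t\xi + \eta/t)}\, dt.
\]
The vanishing of $\widehat{\mu}$ on $\Lambda_\beta^\theta$ translates into $\widehat{f}(n + \theta) = 0$ for all $n \in \Z$ and $\widehat{g}(\beta m) = 0$ for all $m \in \Z$, where $g(s) := s^{-2} f(1/s)$ arises from the substitution $s = 1/t$ in the second integral. Applying Poisson summation to $f(t)\, e^{-2\pi i \theta t}$ and to $g$, these become the periodization identities
\[
\sum_{k \in \Z} f(t + k)\, e^{-2\pi i \theta k} = 0, \qquad \sum_{k \in \Z} g(s + k/\beta) = 0 \qquad \text{a.e.}
\]
The theorem then reduces to showing that these two identities force $f \equiv 0$ precisely when $\beta \leq 1$.

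For the necessity direction ($\beta > 1$), I would adapt the counterexamples of \cite{Hedenmalm-Montes-Annals}. When $\beta > 1$, the second periodization has spacing $1/\beta < 1$, leaving a free interval of length $1 - 1/\beta$ on which $f$ may be chosen arbitrarily; one then extends $f$ using the two periodization constraints. The phase $e^{-2\pi i \theta k}$ merely twists the extension and does not reduce the available degrees of freedom, so nonzero solutions exist for every $\theta$.

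For the sufficiency direction ($\beta \leq 1$), which is the heart of the paper, the plan is to analyze the operator $T : \widehat{\psi} \mapsto \widehat{t^{-2}\psi(1/t)}$ highlighted in the abstract. A direct change of variables $u = 1/t$ realizes $T$ as an inversion-conjugated Fourier transform; splitting into the two hyperbola branches and introducing radial coordinates $r$ with $r^2 = |t|$, one finds that $T$ is essentially the radial Fourier transform on $\R^4$ acting on even profiles. With this identification, the pair of constraints on $F := \widehat{f}$ reads: $F$ vanishes on $\Z + \theta$, and $TF$ vanishes on $\beta \Z$. At the critical density $\beta = 1$ this is exactly the kind of discrete uniqueness principle analyzed by the first author and Viazovska \cite{Radchenko-Viazovska} and further developed in \cite{Bakan-Hedenmalm-Montes-Radchenko-Viazovska, BHMRRV-second}, and the same analysis extends uniformly to all $\beta \leq 1$ via the spectral properties of $T$.

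The hard part will be the rigorous identification of $T$ with the four-dimensional radial Fourier transform, and the adaptation of the existing interpolation/uniqueness theorems to the present setting. Those theorems are typically formulated for Schwartz-class data and for the untwisted integer lattice, while here we face $L^1$ regularity inherited from the measure, together with the translated lattice $\Z + \theta$ for arbitrary $\theta \in \R$. Incorporating the twist $\theta$ is expected to require a modulation-equivariant version of the Radchenko--Viazovska interpolation basis, and descending to low regularity should proceed via approximation and duality arguments that preserve both vanishing conditions.
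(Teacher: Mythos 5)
Your reduction to the two periodization identities is correct and matches the paper's starting point, but both of your proposed mechanisms for finishing have genuine gaps, and the more serious one is in the necessity direction, which is the actual novelty of the paper. The ``free interval of length $1-1/\beta$'' argument does not work: the two constraints are periodizations in \emph{different} variables ($t$ and $1/t$), coupled globally by the inversion, so there is no interval on which $f$ can be prescribed freely and then consistently extended --- any choice on an interval near the origin is propagated by the second identity to a neighbourhood of infinity and then back by the first. This is precisely why the counterexamples of Hedenmalm--Montes-Rodr\'iguez (differences of Poisson kernels, which transform well under the M\"obius actions generating both periodicities) were the only known ones, and why their adaptation by Giri--Rawat to the translated cross yields only the much weaker necessary condition $\beta\le p$ when $\theta=1/p$, with the gap to sufficiency blowing up as $\theta\to 0$. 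The paper's proof of necessity is an entirely different construction (Theorem \ref{thm:negative-gen}): one passes to the variables $\sqrt{|a_n|}$, $\sqrt{|b_n|}$, builds Levin-type entire functions of order $2$ with prescribed zeros there, proves exponential decay of their images under $T$ by shifting contours in the four-dimensional Fourier integral, and then runs a contraction-mapping iteration on a weighted sequence space to produce a nonzero $\psi\in L^1$ vanishing on both families. None of this is recoverable from a degrees-of-freedom count.

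For the sufficiency direction your proposal also diverges from what can be made to work. The paper does \emph{not} prove $\beta\le 1\Rightarrow$ H.U.P.\ via the operator $T$ or via Radchenko--Viazovska-type interpolation; it combines the two periodization identities into a single functional equation $|\psi|\le T_\beta^2|\psi|$ for a transfer operator $T_\beta$ on $L^1[-1,1]$ and invokes the ergodic-theoretic contraction results of Hedenmalm--Montes-Rodr\'iguez (Propositions 3.13.1 and 3.13.13 of their Klein--Gordon paper) to conclude $T_\beta^{2l}|\psi|\to 0$. The interpolation/uniqueness theorems you cite are formulated for radial Schwartz functions with nodes $\{\sqrt{n}\}$, a completely different density regime from $\Z+\theta$ and $\beta\Z$, and there is no known ``modulation-equivariant'' version; moreover the Poincar\'e--Wirtinger route that the paper does develop using $T$ only applies to the $\mathcal{H}_\ell$ classes (which require $\psi\in L^2((1+|y|^{2\ell})\,dy)$), not to general $L^1$ densities, which is exactly why the transfer-operator argument is retained for Theorem \ref{thm:main}. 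So the claim that ``the same analysis extends uniformly to all $\beta\le1$ via the spectral properties of $T$'' is unsupported as stated.
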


Before describing the proof of Theorem \ref{thm:main}, we remark on its surprisingly challenging nature by comparing our result with the ones obtained by using the techniques available in the literature of H.U.P.'s for the hyperbola so far. 

Indeed, in spite of the fact that the original prediction in \cite{Hedenmalm-Montes-Annals} is, after all, correct, the techniques needed in order to fully accomplish it differ significantly from the original ones. This is especially highlighted by the previously available results in this direction in the literature: in order to prove the fact that $\beta \le 1$ implies that $(\Gamma,\Lambda_{\beta}^{\theta})$ is indeed a H.U.P., one needs to resort to the subsequent work \cite{Hedenmalm-Monte-Klein-Gordon}, which in spite of being close in spirit to the results in \cite{Hedenmalm-Montes-Annals}, was only developed several years later. For the proof that $\beta \le 1$ is also \emph{necessary}, the differences are even more drastic: as a matter of fact, the counterexamples in \cite{Hedenmalm-Montes-Annals} are functions of the form 
\begin{equation}\label{eq:Poisson-diff}
\psi(t) = P_{z_1}(t) - P_{z_2}(t),
\end{equation}
where $P_z(t) = \frac{y}{(x-t)^2 + y^2}$ is the Poisson kernel at $z = x + i y$, and $z_1,z_2 \in \C_+$. In \cite{Giri-Rawat}, the authors, while employing the same Poisson extension techniques, obtain that $\beta \le p$ is a necessary condition, whenever $\theta = \frac{1}{p}, \, p \in \N$. Since this lower bound on $\beta$ blows up as $p \to \infty$, the gap between the necessary and sufficient conditions converges to infinity as the parameter $\frac{1}{p} \to 0$, which is diametrically opposite to what happens when $\theta = 0$.  

In order to circumvent these issues, we need two main ingredients for the proof of Theorem \ref{thm:main}: the first is a corrected proof of the ``if'' part of the result from~\cite{Giri-Rawat}, which, as mentioned above, can be obtained by employing the techniques from \cite{Hedenmalm-Monte-Klein-Gordon}, as noted in the corrigendum \cite{Giri-Rawat-corrigendum}. 

The second main ingredient, and the main novel feature of this work, is a new set of counterexamples for the ``only if'' part, going beyond the ones given by \eqref{eq:Poisson-diff}. These new counterexamples are inspired by the correspondence between Heisenberg Uniqueness Pairs and Fourier uncertainty principles highlighted in \cite{Bakan-Hedenmalm-Montes-Radchenko-Viazovska}, together with recent constructions of A.~Kulikov, F.~Nazarov and M.~Sodin \cite{Kulikov-Nazarov-Sodin} of Fourier non-uniqueness sets in dimension~1. 

In order to lay the foundation for an iterative process, as done in \cite{Kulikov-Nazarov-Sodin}, we need to consider a different setup, arising from the operator $T$ which takes as input a function $f$ and returns the restriction of the solution $u$ to the Klein-Gordon equation $\partial_x \partial_y u + \pi^2 u = 0$ with $u|_{\{y=0\}} = f$ to the $y$-axis. An equivalent way to define this operator is by letting it be the operator which takes as input the Fourier transform of a function~$\psi$, and returns as an output the Fourier transform of the function $t^{-2} \psi(1/t)$. As we shall see, this operator is, as in \cite{Bakan-Hedenmalm-Montes-Radchenko-Viazovska}, related to certain four-dimensional Fourier transforms of radial functions. 

The crucial difference here is that its definition allows us to bypass radiality, making it possible for one to consider asymmetric sets in the counterexample construction - a feature not achiavable through a purely Fourier-based approach. Moreover, the properties of the operator $T$ (see Proposition \ref{prop:properties-T}) show that the iteration process has to be done in two separate parts, according to the positive and negative parts of the zero set under consideration. This allows us to maintain the crucial fact that the operator $T$ takes functions supported on a half-line to functions supported on the opposite half-line, allowing one to deduce results also in the case of one-sided H.U.P.'s -- as highlighted in Section \ref{sec:one-sided}. 

\vspace{2mm}

Our next result deals, on the other hand, with the topic of H.U.P.'s for perturbed lattice crosses, as stated in the second part of Question \ref{q:Hedenmalm-Montes-original}. Even in the case dealt with in \cite{Ramos-Stoller}, the perturbations considered are taken to be origin-symmetric and considerably close to the original lattice cross points, with the additional constraint that the functions considered there are taken to be \emph{real-valued}, which excludes a large variety of cases. In that context, we give below the first instance of \emph{sharp} conditions in order for such a perturbed lattice cross to form a H.U.P. with the hyperbola, under different decay assumptions when compared to the ones in, for instance, \cite{Hedenmalm-Montes-Annals,Hedenmalm-Monte-Klein-Gordon}. 

In order to state such a result, we let first
    $$\mathcal{H}_{\ell}(\R) =\{g:\R \to \C \colon g  \in L^2((1+|y|^{2\ell})\, dy), \, \widehat{g} \in C_0(\R)\},$$ 
where $\ell \in \Z_{\ge 0}.$ We remark that these spaces are somewhat reminiscent of the ones defined in \cite{Goncalves-Ramos} in the context of H.U.P.'s for the parabola.  

\begin{definition} Let $\mu$ be a measure on $\R^2$ supported on $\Gamma$ which can be represented as $d\mu(t,1/t) = \psi(t) \, dt,$ for some $\psi \in \mathcal{H}_{\ell}(\R)$, and let $\Lambda$ be a subset of $(\R \times \{0\} ) \cup (\{0\} \times \R)$. We say that the pair $(\Gamma,\Lambda)$ is an \emph{$\mathcal{H}_{\ell}$-Heisenberg Uniqueness Pair} (or $\mathcal{H}_{\ell}$-H.U.P.) if for any $\mu$ as above with $\widehat{\mu}|_{\Lambda} = 0,$ we have $\mu \equiv 0.$ 
\end{definition} 
Note that, if $\ell \ge 1$, then $\mathcal{H}_{\ell} \subset L^1(\R)$, and hence the concept of $\mathcal{H}_{\ell}$-H.U.P.'s is well-defined in that case, since $\widehat{\mu}(x,0) = \widehat{\psi}(x)$, and $\widehat{\mu}(0,y) = \widehat{\varphi}(y)$, where $\varphi(t)= \frac{1}{t^2} \psi\left(\frac{1}{t}\right) \in L^1$. For the $\ell = 0$ case, note that $\psi \in \mathcal{H}_0$ is equivalent to $\varphi \in L^2(t^2\,dt)$, which means that the Fourier transform of $t\cdot \varphi(t)$ is an $L^2$ function. By standard distribution theory arguments (see Section \ref{sec:prelim} below), this implies that the Fourier transform of $\varphi$ as a distribution may be identified with a \emph{continuous} function of polynomial growth, and hence $\widehat{\mu}|_{\left(\R \times \{0\}\right) \cup \left( \{0\} \times \R\right)}$ is defined pointwise and continuous, which shows that the concept of $\mathcal{H}_0$-H.U.P.'s is also well-defined. 

With that in mind, we shall use the following notation for the \emph{generalized lattice cross} 
\[
\Lambda_{{\bf A,B}} = (A \times \{0\}) \cup (\{0\} \times B),
\]
where $A = \{a_n\}_{n \in \Z}, \, B = \{b_n\}_{n \in \Z}$ are two separated sequences of points on $\R$, in the sense that $$\min(|a_{n+1} - a_n|,|b_{n+1} - b_n|) > \delta, \,\qquad \forall \, n \in \Z,$$
for some $\delta > 0$. We will sometimes also call such sets \emph{uniformly discrete}.  

\begin{theorem}\label{thm:improved} Let $\Gamma = \{(x,y) \in \R^2 \colon xy = 1\}$ denote the hyperbola. The following assertions hold: 
\begin{enumerate}[\normalfont(I)]
\item Suppose that 
$$\sup_n |a_{n+1} - a_n| = \alpha, \, \, \sup_n |b_{n+1} - b_n| = \beta.$$
Then the pair $(\Gamma, \Lambda_{\bf A,B})$ is an $\mathcal{H}_0$-Heisenberg Uniqueness Pair if $\alpha \beta < 1$.  Furthermore, $(\Gamma,\Lambda_{\bf A,B})$ is an $\mathcal{H}_2$-\emph{H.U.P.} if $\alpha \beta = 1$. 

\item Suppose that 
$$ \inf_n|a_{n+1} - a_n| = \alpha, \, \, \inf_n |b_{n+1} - b_n| = \beta,$$ 
with $\alpha \beta > 0.$ Then $(\Gamma,\Lambda_{\bf A,B})$ is \emph{not} an $\mathcal{H}_1$-\emph{H.U.P.} (and hence also \emph{neither a } $\mathcal{H}_0$-\emph{H.U.P., nor a  regular H.U.P.}) if $\alpha \beta > 1$. 
\end{enumerate}
\end{theorem}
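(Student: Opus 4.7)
My starting point is the reformulation introduced in the preamble to Theorem \ref{thm:improved}: writing the density of $\mu$ in the $t \mapsto (t, 1/t)$ parametrization of $\Gamma$ as $\psi(t)\,dt$, the two axis restrictions of $\widehat{\mu}$ are $F := \widehat{\psi}$ and $G := \widehat{\varphi}$, where $\varphi(s) = s^{-2}\psi(1/s)$, and the operator $T$ satisfies $G = TF$. The hypothesis $\widehat{\mu}|_{\Lambda_{\mathbf{A},\mathbf{B}}} = 0$ is therefore equivalent to $F|_{\mathbf{A}} \equiv 0$ and $(TF)|_{\mathbf{B}} \equiv 0$, and the $\mathcal{H}_\ell$ decay hypothesis, together with the preliminaries in Section \ref{sec:prelim}, ensures that these pointwise restrictions are meaningful.

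For Part (I) in the subcritical regime $\alpha\beta < 1$, the upper bounds on consecutive gaps translate into lower Beurling-density estimates $D^{-}(\mathbf{A}) \geq 1/\alpha$ and $D^{-}(\mathbf{B}) \geq 1/\beta$, so that $D^{-}(\mathbf{A}) \cdot D^{-}(\mathbf{B}) > 1$. I would then invoke the four-dimensional radial Fourier transform reformulation of $T$ that underlies \cite{Bakan-Hedenmalm-Montes-Radchenko-Viazovska} to view $(F, TF)$ as a single radial object, so that the two vanishing conditions become an overdetermined sampling/interpolation constraint; a Landau/Beurling-type completeness argument should then force $F \equiv 0$. To handle the critical endpoint $\alpha\beta = 1$ under the stronger $\mathcal{H}_2$ hypothesis, I would exhaust $\mathbf{A}$ by finite subsets, apply the subcritical case after removing one point, and use the enhanced decay $\psi \in L^{2}((1+|y|^{4})\,dy)$ to rule out the one-parameter family of potential near-critical solutions.

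For Part (II), the plan is an explicit non-trivial construction in $\mathcal{H}_1$ exploiting that the minimum-gap hypothesis with $\alpha\beta > 1$ leaves a strict density gap in the upper Beurling densities of $\mathbf{A}$ and $\mathbf{B}$. I would adapt the iterative scheme of Kulikov, Nazarov, and Sodin \cite{Kulikov-Nazarov-Sodin} to the operator $T$ in place of the classical Fourier transform, building $\psi$ one dyadic scale at a time so that at each stage one new zero from $\mathbf{A}$ (imposed on $F$) and one new zero from $\mathbf{B}$ (imposed on $TF$) is accounted for. Invoking Proposition \ref{prop:properties-T}, which guarantees that $T$ preserves the class of functions with support on a fixed half-line, I would split $\mathbf{A}$ and $\mathbf{B}$ by sign and run two parallel iterations, one on $(0,\infty)$ and one on $(-\infty, 0)$, so that the two halves of the zero set can be treated independently and asymmetrically.

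The main obstacle I anticipate is matching the endpoint $\mathcal{H}_1$ regularity in Part (II): unlike in the classical Kulikov--Nazarov--Sodin setting, where the iteration naturally converges in $L^2$, here one must simultaneously control the weighted $L^{2}((1+y^{2})\,dy)$ norm of $\psi$ and the $C_0$ regularity of both $\widehat{\psi}$ and $\widehat{\varphi}$ throughout the iteration. This will require a careful choice of perturbation profiles at each step that kills the next prescribed zero without disturbing those already imposed, with a geometric decay in the $\mathcal{H}_1$ contribution per step that ensures summability. The stronger $\mathcal{H}_2$ weight would, as in Part (I), eliminate such a construction, which is precisely why the two parts of the theorem meet at~$\mathcal{H}_1$.
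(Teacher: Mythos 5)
Your Part (II) outline matches the paper's route in its essentials: the counterexamples come from a Kulikov--Nazarov--Sodin-type iteration adapted to $T$, with the zero set split by sign and two parallel constructions on $\R_{\pm}$ exploiting Proposition \ref{prop:properties-T}(III). (The actual construction in the paper is not ``one zero per dyadic scale'': it builds almost-interpolation bases from Levin's entire functions with prescribed zero densities and then runs a contraction mapping on a weighted sequence space; the $\mathcal{H}_1$ membership then falls out of the bound $\|F_{\pm}\|_2+\|F_{\pm}'\|_2<\infty$ rather than requiring delicate per-step control. But the strategy is the same.)

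Part (I) is where the proposal has genuine gaps. In the subcritical case, ``lower Beurling densities multiply to more than $1$, so a Landau/Beurling completeness argument should force $F\equiv 0$'' is not an argument: $F=\widehat{\psi}$ is not band-limited, so the density of $\mathbf{A}$ alone carries no information, and there is no off-the-shelf completeness theorem for the coupled condition $F|_{\mathbf A}=0$, $(TF)|_{\mathbf B}=0$. The mechanism that actually works (in the paper, and already in Kulikov--Nazarov--Sodin) is quantitative: apply the Poincar\'e--Wirtinger inequality on each gap interval $(a_n,a_{n+1})$ to get $\pi^{-2}\int|\widehat{\psi}'|^2\ge \alpha^{-2}\int|\widehat{\psi}|^2$, translate via Plancherel and the identity $\|T\theta\|_2=\frac{1}{\sqrt2\pi}\|\theta'\|_2$ into $\int t^2|\psi|^2 > \int t^2|\varphi|^2$, and then run the symmetric argument on the $b$-intervals to get the reverse strict inequality --- a contradiction. (A preliminary step, which you also omit, is needed to show that the vanishing hypotheses upgrade $\psi\in L^2$ to $\psi\in L^2((1+x^2)dx)$ so that these quantities are finite; this again uses Poincar\'e--Wirtinger on the $b$-intervals.)

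The critical case $\alpha\beta=1$ as you sketch it does not work: removing a point of $\mathbf A$ only \emph{increases} $\sup_n|a_{n+1}-a_n|$, so you never land in the subcritical regime, and there is no ``one-parameter family of near-critical solutions'' to rule out by decay. The paper instead analyzes the \emph{equality case} of Poincar\'e--Wirtinger: equality forces $\widehat{\psi}$ to be a sine arch $t_n\sin(\pi(x-a_n)/(a_{n+1}-a_n))$ on each interval, the $\mathcal{H}_2$ hypothesis gives $\widehat{\psi}\in C^1$ so the derivatives must match at each node, and this rigidity propagates to force $\widehat{\psi}(x)=c\sin(\pi(x-\theta))$ globally, which Riemann--Lebesgue excludes. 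This is the sole role of the $\mathcal{H}_2$ assumption. Relatedly, your closing claim that ``the two parts of the theorem meet at $\mathcal{H}_1$'' because $\mathcal{H}_2$ would kill the Part (II) construction is a misreading: for $\alpha\beta>1$ no decay hypothesis rescues uniqueness, and the $\mathcal{H}_0$ versus $\mathcal{H}_2$ distinction in Part (I) concerns only the endpoint $\alpha\beta=1$.
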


In order to prove the positive part of this result, we use a \emph{Poincar\'e-Wirtinger inequality} argument: by using such estimates, one can essentially control the $L^2$-norm of $x\cdot \psi(x)$, where $\psi$ is the density of a measure $\mu$ as above with $\widehat{\mu}|_{\Lambda_{\bf A,B}} = 0$, \emph{strictly} from below by the $L^2$ norm of $\psi$ itself. On the other hand, the same argument can be used for $\varphi(t) = t^{-2} \psi(1/t)$, which implies the reverse \emph{strict} inequality, yielding a contradiction.

The sketch of proof above is inspired by the positive Fourier Uniqueness results from \cite{Kulikov-Nazarov-Sodin}, but again with differences induced by the context of the result. It is striking to note that, by considering the definition of a $\mathcal{H}_0$-H.U.P., then the argument carried out here allows for a considerable simplification of the subcritical results from \cite{Hedenmalm-Montes-Annals}. It is further important to notice that the class of functions $\mathcal{H}_0$ is inherently \emph{different} from the ones considered in \cite{Hedenmalm-Montes-Annals,Hedenmalm-Monte-Klein-Gordon,CHMR}, which implies that, in the case when $A = \alpha \Z, B = \beta \Z, \, \alpha \beta \le 1,$ our results actually \emph{extend} the ones in those references to different classes. We refer the reader to Sections \ref{sec:neg} and \ref{sec:alt-psi-proof} below for more details. 

Additionally, we remark that Theorem \ref{thm:improved} may be interpreted alternatively as a first step towards an answer to \emph{another} question in \cite{Hedenmalm-Montes-Annals}: in \cite[Open~problem~(d)]{Hedenmalm-Montes-Annals}, the authors ask whether it can be proved that, for a general distributional solution $u$ to $\partial_x\partial_y u + \pi^2 u = 0$, such that $u$ is a continuous function vanishing at infinity, then $u|_{\Lambda_{\alpha,\beta}} = 0 \Rightarrow u \equiv 0$ if $\alpha \beta \le 1$. In that regard, Theorem \ref{thm:improved} does not fully remove the Fourier analysis aspect of the matter, but it accomplishes the goal of having a condition merely dependent on the function $u$, which is that its restriction to the real axis belongs to $L^2(\R)$, and that it vanishes at infinity. We believe that a similar approach could lead to a solution to this problem, which we wish to investigate in future work. 

For Part (II) of Theorem \ref{thm:improved}, we will use essentially the same techniques used in order to construct the counterexamples in the ``only if'' part of Theorem \ref{thm:main}. For that reason, and for the inherent interest in such a construction in its own right, we state an unified result concerning the counterexamples constructed.

\begin{theorem}\label{thm:negative-gen} Let $A = \{a_n\}_{n \in \Z}$ and $B = \{b_n\}_{n \in \Z}$ be two sequences of real numbers such that 
\[
\min\left( \liminf_{n \to \pm \infty} |a_{n+1}-a_n|, \liminf_{n \to \pm \infty} |b_{n+1} -b_n|\right) > 1.
\]
Then there exists a function $\psi \in L^1(\R)$ such that $\psi \not \equiv 0$ and 
\begin{equation}\label{eq:HUP-cond-gen}
\int_{\R} e^{-\pi i \tau a_n} \psi(\tau) \, d\tau = \int_{\R} e^{-\pi i b_n/\tau} \psi(\tau) \, d \tau = 0, \quad 
\forall \, n \in \Z. 
\end{equation} 
\end{theorem}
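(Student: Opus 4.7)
Setting $g := \widehat{\psi}$, the two families of interpolation conditions in \eqref{eq:HUP-cond-gen} become $g(a_n/2) = 0$ and $(Tg)(b_n/2) = 0$ for all $n \in \Z$, where $T$ is the operator introduced above, sending $\widehat{\psi}$ to $\widehat{\varphi}$ with $\varphi(t) = t^{-2}\psi(1/t)$. The first conceptual step is to invoke the mapping property of $T$ recorded in Proposition~\ref{prop:properties-T}, which asserts that $T$ interchanges the class of functions supported on $\R_+$ with that of functions supported on $\R_-$. Decomposing $g = g_+ + g_-$ with $\supp g_\pm \subseteq \pm[0,\infty)$ then decouples the problem: $g_+$ is required only to vanish on $\{a_n/2 : a_n > 0\}$ and to have $Tg_+$ vanish on $\{b_n/2 : b_n < 0\}$, while $g_-$ handles the opposite halves. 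This reduces matters to two independent one-sided interpolation problems, each involving a pair of sequences whose gaps eventually exceed~$1$.

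For each one-sided problem, the plan is to adapt the iterative construction of Kulikov, Nazarov and Sodin of Fourier non-uniqueness sets in $\R$. The starting point is a smooth, rapidly-decaying seed $g_0$ supported on $\R_+$. One then alternates between two interpolation corrections: subtracting from the running approximation a linear combination of reproducing kernels that forces it to vanish on the prescribed points of $A_+/2$; and subtracting a correction, transported via $T^{-1}$, that forces its $T$-image to vanish on the prescribed points of $B_-/2$. The strict hypothesis $\liminf|a_{n+1}-a_n|>1$, and analogously for $B$, is precisely what should make these interpolating operators \emph{contractive} on the weighted Hilbert space on $\R_+$ naturally attached to~$T$ -- the one-dimensional avatar of the radial Paley--Wiener class associated with the four-dimensional Fourier-transform interpretation of $T$ highlighted in the introduction. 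A Neumann-type series should then furnish the desired limit $g_+$, and an analogous argument produces $g_-$; the sum $g = g_+ + g_-$ is the Fourier transform of the required $\psi\in L^1(\R)$. The non-vanishing of the limit follows because the very first correction is already small in norm, so that $g_+$ stays close to the seed $g_0$.

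The main obstacle I anticipate is the verification that the two interpolation operators are indeed strict contractions in a space large enough to contain Fourier transforms of $L^1$ functions, as only then does the iteration converge to a nonzero limit. Here the strict inequality in the gap hypothesis plays the role of a sub-critical Beurling-type density condition, and making it quantitatively effective requires sharp estimates on the reproducing kernels attached to $T$, which in turn rest on the explicit link between $T$ and the four-dimensional radial Fourier transform. A secondary technical point is the preservation of $L^1$ regularity of $\psi$ throughout the iteration, which I would control by choosing $g_0$ with enough smoothness and decay so that the accumulated corrections remain in the Fourier image of $L^1(\R)$. Note that the asymmetry between the positive and negative parts of $A$ and $B$ introduces no additional obstruction, precisely because the decoupling afforded by $T$ dispenses with any radial-symmetry requirement.
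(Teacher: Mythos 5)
Your architecture coincides with the paper's: decouple via Part (III) of Proposition~\ref{prop:properties-T} into two one-sided problems for $g_{\pm}$ supported on half-lines (with $Tg_{+}$ constrained only on the negative $b_n$ and vice versa), then solve each by a Kulikov--Nazarov--Sodin-type contractive iteration whose convergence is driven by the gap condition exceeding $1$. But as written the proposal has two genuine gaps. The first is that the entire quantitative content of the proof --- the existence of your ``reproducing kernels'' (the paper's almost-interpolation basis $\varrho_n,\sigma_n$) satisfying $|\varrho_n(t)|\le Ce^{-\alpha''|t|+\alpha|a_n|}$ and, crucially, $|T\varrho_n(\xi)|\le Ce^{-\alpha'|\xi|+\alpha|a_n|}$ with a strict gain $\alpha'>\alpha$ --- is exactly what you defer to ``sharp estimates on the reproducing kernels attached to $T$''. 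This is not a routine verification: the paper obtains it by invoking Levin's Theorem~\ref{thm:levin} to build even entire functions of order $2$ with prescribed zeros at $\sqrt{|a_n|}$ and controlled indicator, and then by a contour shift in the four-dimensional Fourier integral \eqref{eq:T-Fourier-dim-4} (Lemma~\ref{lemma:preliminary-2}); the strict inequality $k(\theta)<\pi\omega^2\sin^2\theta$, which is where ``density $<1$'' enters, is what produces $\alpha'>\alpha$ and hence the contraction. Without this construction the contractivity claim, and therefore the whole Neumann series, is unsubstantiated.

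The second gap concerns the finitely many nodes near the origin. The hypothesis is only a $\liminf$ as $n\to\pm\infty$, so finitely many consecutive $a_n$ (or $b_n$) may be arbitrarily close or even coincide; no interpolation operator can be contractive on the full node set, and your nontriviality argument (``the first correction is already small, so $g_+$ stays close to the seed'') breaks down there. The paper circumvents this by (i) enlarging $A,B$ to $1$-smooth supersets $A',B'$ containing infinitely many extra points (Lemma~\ref{lemma:p-smooth-seq}), (ii) running the contraction only on nodes outside a large interval $[-L,L]$ (Proposition~\ref{prop:interpolation}), which yields an \emph{infinite-dimensional} space of solutions of the tail problem thanks to the free values at the added points, and (iii) disposing of the finitely many remaining conditions inside $[-L,L]$ by a codimension count. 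Some version of this reduction is indispensable for both convergence and nontriviality, and it is absent from your plan. The final step --- that $\psi=\widecheck{g}\in L^1$ because $g\in L^2\cap H^1$ --- you handle correctly in spirit by tracking smoothness and decay through the iteration.
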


We further note that the results in Theorem \ref{thm:improved} can also be strengthened so that the density condition in its positive part are only taken to hold in a limit sense. The proof of such a result, as expected, runs through a generalization of the results in \cite{Kulikov-Nazarov-Sodin} to higher dimensions. We defer the discussion of this to Section \ref{sec:KNS}. 

Lastly, we briefly point out that our methods allow for a \emph{complete} solution of the original problems studied by H. Hedenmalm and A. Montes-Rodr\'iguez in \cite{Hedenmalm-Montes-Annals}, under the $\mathcal{H}_0$-condition, even in the critical $\alpha \beta = 1$ case. It also yields a solution of the $\mathcal{H}_0$-modification of Theorem \ref{thm:main}. We briefly highlight these facts in the following result (see Section \ref{sec:alt-psi-proof} for further details): 

\begin{theorem}\label{thm:L^2-HMR} Let $\Gamma$ be the hyperbola as defined in Theorem \ref{thm:main}, and $\theta \in \R$. If $\Lambda_{\alpha,\beta}^{\theta} = \left( \left( \alpha \Z + \{\theta\}\right) \times \{0\} \right) \cup \left( \{0\} \times \beta \Z\right)$, then $(\Gamma,\Lambda_{\alpha,\beta}^{\theta})$ is a $\mathcal{H}_0$-{\emph{H.U.P.}} if, and only if, $\alpha \beta \le 1$. 
\end{theorem}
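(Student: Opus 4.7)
The plan is to proceed by the trichotomy $\alpha\beta<1$, $\alpha\beta=1$, $\alpha\beta>1$. The subcritical regime follows directly from Theorem~\ref{thm:improved}(I) applied to $A=\alpha\Z+\theta$ and $B=\beta\Z$, whose gaps coincide with $\alpha$ and $\beta$. For the supercritical case $\alpha\beta>1$, I would rescale by $c=\sqrt{\alpha/\beta}$ and $\tilde\psi(t)=\psi(ct)$: the vanishing of $\widehat\mu$ on $\Lambda_{\alpha,\beta}^{\theta}$ becomes the vanishing of $\widehat{\tilde\mu}$ on $\Lambda_{\sqrt{\alpha\beta},\sqrt{\alpha\beta}}^{\theta/c}$, whose gaps $\sqrt{\alpha\beta}>1$ place us in the scope of Theorem~\ref{thm:negative-gen}. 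The resulting $\tilde\psi\in L^1(\R)$ satisfies \eqref{eq:HUP-cond-gen}, and the smoothness/rapid decay of the Kulikov--Nazarov--Sodin-type construction underlying that theorem place $\tilde\psi\in L^1\cap L^2$ with $\widehat{\tilde\psi}\in C_0(\R)$, so $\tilde\psi\in\mathcal{H}_0$; undoing the scaling yields the desired counterexample.

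For the critical case $\alpha\beta=1$, suppose $\psi\in\mathcal{H}_0$ with $\widehat\mu|_{\Lambda_{\alpha,\beta}^{\theta}}=0$, and let $\varphi(t)=t^{-2}\psi(1/t)$. The change of variables $s=1/t$ gives $\|t\varphi\|_{L^2}=\|\psi\|_{L^2}$, so $t\varphi\in L^2$ and hence $\widehat\varphi\in H^1$. The Poincaré--Wirtinger inequality applied to $\widehat\varphi$ on each interval $[\beta m,\beta(m+1)]$ (where $\widehat\varphi$ vanishes at both endpoints), summed in $m$ and combined with Plancherel, yields $\|t\psi\|_{L^2}\le\beta\|\psi\|_{L^2}$. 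This places $\widehat\psi$ in $H^1$, and the symmetric application on $[\theta+\alpha n,\theta+\alpha(n+1)]$ gives $\|\psi\|_{L^2}\le\alpha\|t\psi\|_{L^2}$. Chaining the two estimates and invoking $\alpha\beta=1$ forces equality throughout, and the rigidity of Poincaré--Wirtinger then pins $\widehat\psi$ and $\widehat\varphi$ to the piecewise sinusoidal forms
\[
\widehat\psi(x)=c_n\sin\!\bigl(\pi(x-\theta-\alpha n)/\alpha\bigr),\quad \widehat\varphi(y)=d_m\sin\!\bigl(\pi(y-\beta m)/\beta\bigr)
\]
on the respective subintervals, with $(c_n),(d_m)\in\ell^2$ and $c_n,d_m\to 0$.

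Fourier-inverting these piecewise sinusoids produces the explicit representations $\psi(t)=\check\tau(t)e^{\pi i\theta t}C(t)$ and $\varphi(t)=\check\sigma(t)D(t)$, where $\check\tau,\check\sigma$ are the elementary rational-exponential inverse transforms of the sine bumps $\chi_{[0,\alpha]}(u)\sin(\pi u/\alpha)$ and $\chi_{[0,\beta]}(u)\sin(\pi u/\beta)$, and $C(t)=\sum_{n}c_n e^{i\pi\alpha nt}$, $D(t)=\sum_{m}d_m e^{i\pi\beta mt}$ are the associated trigonometric series. The Möbius-type identity $\varphi(t)=t^{-2}\psi(1/t)$ then becomes a functional equation in $C$ and $D$; evaluating at the sequence $t_k=\alpha/(2k+1)$, where the factor $e^{i\pi\alpha/t}+1$ inside $\check\tau(1/t)$ vanishes, forces $D(t_k)=0$ for every $k$, and a symmetric evaluation on the $\psi$-side yields the analogous zeros of $C$. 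Combining these vanishing conditions with the $\ell^2$ integrability and vanishing-at-infinity of $(c_n),(d_m)$ is intended to collapse $C\equiv D\equiv 0$, and hence $\psi\equiv 0$.

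The principal difficulty is this final rigidity step: since Theorem~\ref{thm:improved}(I) only provides $\mathcal{H}_2$-H.U.P.\ at $\alpha\beta=1$, the jumps-in-$\widehat\psi'$ obstruction available in the $\mathcal{H}_2$ setting is not directly accessible in the weaker $\mathcal{H}_0$ setting, and the conclusion must instead be extracted through the non-commensurable compatibility of the trigonometric series $C$ (of period $2/\alpha$) and $D$ (of period $2\alpha$) encoded in the functional equation above.
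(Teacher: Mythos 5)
Your treatment of the subcritical and supercritical regimes is correct and coincides with the paper's: the case $\alpha\beta<1$ follows from Theorem \ref{thm:improved}(I), and for $\alpha\beta>1$ the rescaled counterexample from Theorem \ref{thm:negative-gen} does lie in $\mathcal{H}_0$ (it satisfies $\|F_{\pm}\|_2+\|F_{\pm}'\|_2<\infty$ and has a continuous, exponentially decaying Fourier transform). In the critical case you also reach the same intermediate point as the paper: equality throughout the chained Poincar\'e--Wirtinger inequalities forces $\widehat\psi$ and $\widehat\varphi$ to be piecewise sinusoids with $\ell^2$ coefficient sequences, and Fourier inversion converts $\varphi(t)=t^{-2}\psi(1/t)$ into a functional equation between the two associated trigonometric series --- this is exactly \eqref{eq:identity-poisson-integ}.

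The gap is the final rigidity step, which you acknowledge you have not carried out, and the route you sketch for it cannot work. The series $C(t)=\sum_n c_n e^{i\pi\alpha nt}$ and $D(t)=\sum_m d_m e^{i\pi\beta mt}$ have only $\ell^2$ coefficients, so they are merely $L^2_{loc}$ periodic functions and pointwise evaluation at $t_k=\alpha/(2k+1)$ is not defined; and even granting continuity, the points $t_k$ accumulate only at the origin, so the conditions $D(t_k)=0$ form a constraint on a measure-zero, non-relatively-dense set and impose essentially nothing on a periodic function. The paper's resolution is of a genuinely different nature: after normalizing $\alpha=\beta=1$, the modulus $\Psi$ of either side of the functional equation is invariant under the two M\"obius maps $t\mapsto t+2$ and $t\mapsto t/(1-2t)$, which generate a discrete free group $\mathcal{G}$ acting on $\C_+$ with fundamental domain having cusps only at $0,\pm1,\infty$. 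One first reduces to bounded data by replacing $\Psi$ with indicator functions of its level sets, then takes the Poisson extension $\tilde\Psi$, a bounded $\mathcal{G}$-invariant harmonic function on $\C_+$; the cusps are removable singularities, so $\tilde\Psi$ descends to a harmonic function on the compact quotient Riemann surface and is constant by Liouville. Constancy of $\Psi$ forces $\psi(t)=c/(t^2-1)$, which is in $L^2(\R)$ only if $c=0$. Without this (or an equivalent) group-theoretic and harmonic-extension input, your argument does not close the critical case.
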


The proof of that result employs similar techniques as that of Theorems \ref{thm:main} and \ref{thm:improved} in order to deal with the subcritical and supercritical cases. In the critical case, however, a more subtle analysis is needed, and the equality case in the Poincar\'e--Wirtinger inequality yields that any function yielding a candidate to fail the H.U.P. property satisfies two different kinds of \emph{periodicity} conditions. One of them is a usual $2$-periodicity condition, and the other is that the function composed with $1/t$ is still $2$-periodic. By using classical results on the geometry of discrete groups, we are able to conclude that the only functions satisfying those conditions are constants, and a direct analysis shows that the only alternative is that the functions we deal with are identically zero. 

The manuscript is organized as follows: we first set, in Section \ref{sec:prelim}, the groundwork for our arguments, defining the operator $T$ and stating its most important properties. We then prove in Section~\ref{sec:proof-neg} Theorem \ref{thm:negative-gen}, since it will be of universal use throughout the manuscript. In Section \ref{sec:counterex}, we finally prove Theorems \ref{thm:main} and Theorem \ref{thm:improved}. Finally, in Section \ref{sec:applications}, we discuss applications of our main results and techniques: in particular, we comment on an asymptotic generalization of Theorem \ref{thm:improved} in Section \ref{sec:KNS}; we prove a one-sided lattice cross result, as studied in \cite{Hedenmalm-Monte-Klein-Gordon}, in Section \ref{sec:one-sided}; and we comment further on the endpoint case of Theorem \ref{thm:improved} and its relationship to summation formulas in Section \ref{sec:alt-psi-proof}, providing there also the proof of Theorem \ref{thm:L^2-HMR}. 

\section{Preliminaries}\label{sec:prelim}

\subsection{Notation and definitions} We shall use the following notation throughout the manuscript: whenever not further specified, $a \lesssim b$ means that $a \le C \cdot b$, where $C>0$ is a finite numerical constant whose exact value is not important for the respective context. \\

We define the Fourier transform  of $f \in L^1(\R^d)$ in dimension $d \in \N$ as 
\begin{equation}\label{eq:fourier-d-def}
\Fd(f)(\xi) = \int_{\R^d} f(x) e^{-2\pi i x \xi} \, dx. 
\end{equation}
In some cases, however, it will be convenient to consider a different normalization: 
\begin{equation}\label{eq:fourier-d-1-special}
\widehat{f}(\xi) = \frac{1}{\sqrt{2}^d} \int_{\R^d} f(x) e^{-\pi i x\cdot \xi } \, dx.
\end{equation}
This distinction will spare us from some unnecessary calculations with changes of variables. The inverse of the operator defined in \eqref{eq:fourier-d-1-special} will be denoted by 
\[
\widecheck{f}(\xi) = \frac{1}{\sqrt{2}^d} \int_{\R^d} f(x) e^{\pi i x \xi} \, dx.
\]
Finally, for a given class of measurable functions $\mathcal{C}$, we will usually write that $(\Gamma,\Lambda)$ is a $\mathcal{C}$-Heisenberg Uniqueness Pair (or $\mathcal{C}-$H.U.P.) if, given any $\mu$ supported on $\Gamma$ absolutely continuous with density $\psi \in \mathcal{C}$ and such that $\widehat{\mu}|_{\Lambda} = 0$, we have $\mu \equiv 0$. Here, note that the normalization employed is crucial in order to state an accurate version of Theorems \ref{thm:main} and \ref{thm:improved}. 

\subsection{Function spaces and embeddings} We shall use the following notation for \emph{homogeneous} Sobolev spaces: 
\begin{equation}
H^s(\R) = \left\{ f \in \mathcal{S}'(\R) \colon \, \widehat{f} \text{ is a function in } L^1_{loc}(\R\setminus\{0\}) \text{ and } \int_{\R} |\widehat{f}(\xi)|^2 |\xi|^{2s} \, d\xi < +\infty \right\},
\end{equation}
where $\mathcal{S}(\R)$ denotes the Schwartz class, and $\mathcal{S}'(\R)$ denotes the space of tempered distributions. In the case of a domain $\Omega \subset \R^d$, we will use the following notation for the \emph{inhomogeneous} Sobolev space: 
\[
W^{1,p}(\Omega) = \left\{ f:\Omega \to \C \colon \, \nabla f \text{ exists in the weak sense, and }\|f\|_p + \|\nabla f\|_p < +\infty\right\}. 
\] 
With regard to Sobolev spaces on intervals of the real line, we highlight the classical \emph{Poincar\'e-Wirtinger inequality}, which will be crucial in the proof of Theorem \ref{thm:improved}, as well as in Section \ref{sec:KNS}. We refer the reader to \cite[Section~7.7]{HLPIneq} for a proof of this result. 

\begin{theorem*}[Poincar\'e-Wirtinger] Let $(a,b) = I \subset \R$ be an open interval. Suppose $f \in W^{1,2}(I)$ is such that $f(a) = f(b) = 0$. Then we have that
\begin{equation}\label{eq:p-w-set} 
\int_I |f(t)|^2 \, dt \le \left(\frac{|I|}{\pi}\right)^2 \int_I |f'(t)|^2 \,dt,
\end{equation} 
with equality if, and only if, $f(x) = c \sin\left( \frac{\pi (x-a)}{|I|}\right)$ for some $c \in \C$. 
\end{theorem*}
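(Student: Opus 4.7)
My plan is to reduce the problem to a canonical interval, expand the function in a sine Fourier series, and compare coefficients termwise via Parseval's identity.

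First, I would reduce to the case $I = (0, \pi)$ via the affine substitution $x = a + t\,|I|/\pi$. Setting $g(t) := f(a + t|I|/\pi)$, one has $g \in W^{1,2}(0,\pi)$ with $g(0) = g(\pi) = 0$, and a direct change of variables yields
\[
\int_0^\pi |g(t)|^2 \, dt = \tfrac{\pi}{|I|}\int_I |f(x)|^2\,dx, \qquad \int_0^\pi |g'(t)|^2\,dt = \tfrac{|I|}{\pi}\int_I |f'(x)|^2\,dx.
\]
Hence the inequality \eqref{eq:p-w-set} is equivalent to $\int_0^\pi |g|^2 \le \int_0^\pi |g'|^2$, with the equality cases transforming accordingly.

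Next, I would extend $g$ to an odd, $2\pi$-periodic function $\tilde{g}$ on $\R$. Because $W^{1,2}(0,\pi)$ embeds continuously into $C^0([0,\pi])$ (the one-dimensional Sobolev embedding), the vanishing at the endpoints is meaningful pointwise, and a short integration-by-parts argument against a test function shows that $\tilde{g}\in H^1_{\mathrm{loc}}(\R)$, with weak derivative given by the even $2\pi$-periodic extension of $g'$. Since $\tilde{g}$ is odd, its $L^2(-\pi,\pi)$ Fourier series has only sine terms, $\tilde{g}(x) = \sum_{n\ge 1} c_n\,\sin(nx)$, convergent in $L^2$, and termwise differentiation yields the series $\tilde{g}'(x) = \sum_{n\ge 1} n\,c_n \cos(nx)$, convergent in $L^2(-\pi,\pi)$ precisely because $\tilde{g}' \in L^2$.

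Finally, Parseval's identity for each orthogonal system gives
\[
\int_0^\pi |g|^2\,dt = \tfrac{\pi}{2}\sum_{n\ge 1}|c_n|^2, \qquad \int_0^\pi |g'|^2\,dt = \tfrac{\pi}{2}\sum_{n\ge 1} n^2 |c_n|^2,
\]
so $n^2 \ge 1$ for every $n\ge 1$ yields the desired inequality termwise, with equality if and only if $c_n = 0$ for every $n \ge 2$, i.e., $g(x) = c_1 \sin(x)$. Undoing the rescaling recovers $f(x) = c_1 \sin(\pi(x-a)/|I|)$.

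The only genuinely delicate step is verifying that the odd periodic extension $\tilde{g}$ lies in $H^1$ across the gluing points $\{k\pi\}_{k\in\Z}$ with the claimed weak derivative; this is precisely where the boundary condition $g(0) = g(\pi) = 0$ intervenes, and without it the extension would pick up Dirac deltas at the endpoints. Apart from this, the argument is a routine application of the fact that $\{\sin(nx)\}_{n\ge 1}$ and $\{\cos(nx)\}_{n\ge 0}$ are complete orthogonal systems of $L^2(0,\pi)$.
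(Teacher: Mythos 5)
Your proof is correct: the rescaling identities, the $H^1$ odd periodic extension (valid precisely because of the endpoint vanishing), the Parseval comparison $\sum n^2|c_n|^2 \ge \sum |c_n|^2$, and the equality analysis are all sound, including for complex-valued $f$. The paper does not prove this classical result itself but refers to Hardy--Littlewood--P\'olya; your Fourier-series argument is the standard proof found in such references, so there is nothing further to compare.
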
 

We also note the following embedding estimate, which will be particularly useful in Section \ref{sec:KNS}:  

\begin{proposition}\label{prop:embedding} There exists $C>0$ such that, for any $\theta \in L^2(\R) \cap H^1(\R)$ with $\theta(0) = 0$, we have 
\[
\|\theta\|_{L^2(dt/|t|)} \le C \|\theta\|_{L^2}^{1/2}\|\theta'\|_{L^2}^{1/2}. 
\]
\end{proposition}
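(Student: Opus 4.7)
The plan is to reduce the estimate to the one-dimensional Hardy inequality combined with a single application of Cauchy--Schwarz. Note first that since $\theta \in L^2(\R) \cap H^1(\R)$ (in the notation of the paper, $\theta' \in L^2$), we may regard $\theta$ as an element of the inhomogeneous Sobolev space $W^{1,2}(\R)$, which embeds continuously into $C^{1/2}(\R)$; the condition $\theta(0)=0$ is therefore meaningful pointwise.

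The first step is to establish the Hardy-type bound
\begin{equation*}
\int_{\R} \frac{|\theta(t)|^2}{t^2}\, dt \le 4\, \|\theta'\|_{L^2}^2.
\end{equation*}
This is standard: working separately on $(0,\infty)$ and $(-\infty,0)$, one can integrate $(|\theta|^2)' = 2\real(\bar\theta \theta')$ against $1/t^2$ by parts (using $\theta(0)=0$ to eliminate the boundary term at $0$ and the finiteness of $\|\theta\|_{L^2}$ together with Cauchy--Schwarz to kill the boundary term at infinity), and then apply Cauchy--Schwarz to the resulting mixed integral. Alternatively, one can observe that $|\theta(t)|^2 \le |t|\int_0^{|t|}|\theta'(s)|^2\,ds$ and use Fubini to reach the same estimate with a (slightly worse) explicit constant, which is equally sufficient for our purposes.

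The second step is to factor the integrand defining $\|\theta\|_{L^2(dt/|t|)}^2$ and apply Cauchy--Schwarz:
\begin{equation*}
\int_{\R} \frac{|\theta(t)|^2}{|t|}\, dt
\;=\; \int_{\R} \frac{|\theta(t)|}{|t|}\cdot |\theta(t)|\, dt
\;\le\; \left(\int_{\R}\frac{|\theta(t)|^2}{t^2}\,dt\right)^{1/2}\left(\int_{\R}|\theta(t)|^2\,dt\right)^{1/2}
\;\le\; 2\,\|\theta'\|_{L^2}\,\|\theta\|_{L^2}.
\end{equation*}
Taking square roots yields the claim with $C=\sqrt{2}$.

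There is no real obstacle here; the only thing to watch is that the Hardy inequality genuinely requires $\theta(0)=0$ (without it, one loses the bound near the origin), and that the Sobolev embedding is what allows us to make sense of this pointwise vanishing condition. An entirely self-contained alternative avoids Hardy altogether: split the integral at $|t|=R$, bound $\int_{|t|\le R}|\theta|^2/|t|\,dt \le 2R\,\|\theta'\|_{L^2}^2$ (using $|\theta(t)|^2 \le |t|\,\|\theta'\|_{L^2}^2$) and $\int_{|t|>R}|\theta|^2/|t|\,dt \le R^{-1}\|\theta\|_{L^2}^2$, and optimize over $R$ by choosing $R = \|\theta\|_{L^2}/\|\theta'\|_{L^2}$; this gives the same inequality up to a different absolute constant.
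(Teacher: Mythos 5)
Your proof is correct, and your primary route is genuinely different from the paper's. The paper proves the proposition directly by the splitting argument you relegate to your final paragraph: it uses $|\theta(t)|^2\le |t|\,\|\theta'\|_{L^2}^2$ to bound $\int_{|t|\le\delta}|\theta|^2\,dt/|t|\le 2\delta\|\theta'\|_{L^2}^2$, bounds the outer piece by $\delta^{-1}\|\theta\|_{L^2}^2$, and optimizes over $\delta$ --- so your ``entirely self-contained alternative'' is word for word the paper's proof. Your main route (Hardy's inequality $\int_{\R}|\theta|^2\,dt/t^2\le 4\|\theta'\|_{L^2}^2$ followed by Cauchy--Schwarz against $|\theta|$) is a clean variant that yields an explicit constant $C=\sqrt 2$; the paper's route is more elementary in that it avoids invoking (or reproving) Hardy, at the cost of a slightly worse constant. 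Both are perfectly adequate here, since only the existence of some absolute $C$ is used later.

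One parenthetical remark in your first step is wrong, though it does not damage the proof: you claim that the pointwise bound $|\theta(t)|^2\le |t|\int_0^{|t|}|\theta'(s)|^2\,ds$ plus Fubini gives the Hardy bound with a worse constant. It does not: on $(0,\infty)$ one gets
\[
\int_0^\infty \frac{1}{t}\int_0^t|\theta'(s)|^2\,ds\,dt=\int_0^\infty|\theta'(s)|^2\int_s^\infty\frac{dt}{t}\,ds=+\infty,
\]
so this shortcut diverges. Stick with the integration-by-parts derivation of Hardy (taking care, as you indicate, that the boundary term at $0$ vanishes because $|\theta(\varepsilon)|^2/\varepsilon\le\int_0^\varepsilon|\theta'|^2\to 0$), or simply use the splitting argument, which needs no Hardy inequality at all.
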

\begin{proof} We begin by noting that $\theta \in H^1(\R)$ implies readily that $\theta \in AC(\R)$. Moreover, we have the estimate 
\[
|\theta(s)| \le \int_0^s |\theta'(t)| \, dt \le s^{1/2} \left(\int_0^s |\theta'(t)|^2 \, dt\right)^{1/2} \le \|\theta'\|_2 \cdot s^{1/2}. 
\]
Hence, we take $\delta > 0$ and split 
\begin{align*}
\|\theta\|_{L^2(dt/|t|)}^2 & = \int_{[-\delta,\delta]} \frac{|\theta(t)|^2}{|t|} \, dt + \int_{\R \setminus [-\delta,\delta]} \frac{|\theta(t)|^2}{|t|} \, dt \cr 
            & \le 2\delta \|\theta'\|_{L^2}^2 + \frac{1}{\delta} \|\theta\|_2^2. 
\end{align*} 
Choosing $\delta >0$ so as to minimize the right-hand side above directly implies the claim. 
\end{proof} 

Finally, we prove the fact mentioned in the introduction that $\psi \in \mathcal{H}_0$ implies that $\widehat{\varphi} \in C(\R)$. We remark first that, as previously noted, $\psi \in L^2$ implies $\varphi \in L^2(x^2 \, dx)$. This shows that the Fourier transform of $x \varphi(x)$ is $L^2$-integrable, which is equivalent to the assertion that $\widehat{\varphi} \in H^1(\R)$. The next result then proves the claim that $\widehat{\varphi} \in C(\R)$. 

\begin{proposition} Let $f \in H^1(\R)$. Then $f$ may be represented as a continuous function. 
\end{proposition}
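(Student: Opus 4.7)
The plan is to reduce the proposition to a statement about the distributional derivative $f'$: I would show that $f' \in L^2(\R)$, then construct a continuous primitive which must agree with $f$ up to an additive constant.

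First, I would unpack the definition of $H^1(\R)$ given in the paper. The condition $\int_{\R} |\widehat{f}(\xi)|^2 |\xi|^2 \, d\xi < +\infty$ says precisely that the function $\xi \mapsto \xi \, \widehat{f}(\xi)$ belongs to $L^2(\R)$. Since the Fourier transform (in either normalization \eqref{eq:fourier-d-def} or \eqref{eq:fourier-d-1-special}) intertwines differentiation with multiplication by $\xi$ up to a multiplicative constant, this is equivalent to $\widehat{f'} \in L^2(\R)$; Plancherel then gives $f' \in L^2(\R)$.

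Second, I would define the primitive $g(x) := \int_0^x f'(t) \, dt$. The Cauchy--Schwarz inequality gives $|g(x) - g(y)| \le |x-y|^{1/2} \, \|f'\|_{L^2(\R)}$, so $g$ is Hölder continuous of exponent $\tfrac{1}{2}$ on $\R$; in particular $g$ is continuous, grows at most like $|x|^{1/2}$, and hence defines a tempered distribution. The fundamental theorem of calculus (read at the distributional level) shows that $g' = f'$ in $\mathcal{S}'(\R)$, so $(f-g)' = 0$. By the classical fact that a tempered distribution on $\R$ with vanishing derivative must be a constant, there exists $c \in \C$ such that $f = g + c$ in $\mathcal{S}'(\R)$. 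Hence $f$ agrees almost everywhere with the continuous function $g+c$, which is the claim.

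The only real subtlety — more of a bookkeeping point than an obstacle — is justifying that the various identifications are valid at the level of tempered distributions. One must use that $\widehat{f}$ is assumed to be a genuine function (ruling out Dirac-type contributions at the origin that would otherwise be invisible to the $H^1$-seminorm), together with the standard characterization of tempered distributions with zero derivative, to close the argument.
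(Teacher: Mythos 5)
Your proposal is correct and follows essentially the same route as the paper: deduce $f'\in L^2(\R)$ from $\xi\,\widehat{f}(\xi)\in L^2$, take the H\"older-continuous primitive $\int_0^x f'(t)\,dt$, and conclude via the fact that a tempered distribution with vanishing derivative is constant (which the paper proves in detail via the support of the Fourier transform, while you cite it as standard). The subtlety you flag about identifications at the distributional level is exactly the point the paper's proof spends most of its effort on, so no gap.
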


\begin{proof} Let $f'$ denote the derivative of $f$ as a tempered distribution. Then we have that, for $g \in \mathcal{S}(\R),$
\[
\langle f',g\rangle = -\langle f,g'\rangle = - \langle \widehat{f}, \widehat{g'}\rangle. 
\]
Since $\widehat{f} \in L^1_{loc}(\R \setminus \{0\})$, we may write the pairing $\langle \widehat{f},\widehat{g'}\rangle = -\pi i \cdot \int_{\R} \widehat{f}(\xi) \, \xi\, \overline{\widehat{g}(\xi)} \, d\xi$, whenever $\widehat{g}$ has compact support not containing $0$. Since the latter integral is bounded by $\pi \|f\|_{H^1} \|g\|_2,$ we may actually extend this identification to all $g \in \mathcal{S}(\R).$ Hence, 
\[
\langle f',g\rangle = \int_{\R}(\pi i \xi) \widehat{f}(\xi) \, \overline{\widehat{g}(\xi)} \, d\xi = \langle \widecheck{\pi i \xi \cdot \widehat{f}}, g\rangle. 
\]
This shows that $\widehat{f'}$ is a locally integrable function, and $\widehat{f'}(\xi) = \pi i \xi \widehat{f}(\xi) \in L^2$, which shows that $f' \in L^2(\R)$. Now, let $h \in \mathcal{S}'(\R)$ denote another distribution such that $h' = f'$. We then have 
\[
\langle h-f,g'\rangle = - \langle h'-f',g\rangle = 0 
\]
for all $g \in \mathcal{S}(\R)$. On the other hand, 
\[
\langle h-f,g'\rangle = \langle \widehat{h-f},(\pi i \xi) \cdot \widehat{g} \rangle. 
\]
Let then $\eta$ be compactly supported, and $0 \not\in \supp(\eta)$. Then $\eta(\xi) = \pi i  \xi \widehat{g}(\xi)$ for some $g \in \mathcal{S}(\R)$, and hence $\langle \widehat{h-f},\eta\rangle = 0$. Thus, we conclude that $\widehat{h-f}$ is \emph{supported at }$0$. Thus, $\widehat{h-f} = \sum_{k=0}^N a_k \partial_x^k$ for some $N \in \N$. Since this implies that $h-f$ is a polynomial, and since $\langle h-f,g'\rangle = 0$ for all $g \in \mathcal{S}(\R)$, we conclude that $h-f$ is a constant. 

In order to conclude the proof, note that $f_0(x) = \int_0^x f'(t) \, dt$ satisfies that $f_0' = f'$ in the sense of distributions. Since $f_0$ is (H\"older) continuous, and since by the considerations above $f_0$ and $f$ differ by a constant, $f$ is continuous, as desired. 
\end{proof} 

\subsection{Main equivalences and the operator $T$}\label{sec:equivalences}
In this section, we shall begin to analyze the operator described in the Introduction, which maps a ``reasonable'' function $f$ to the restriction to the $y$-axis of the solution $u$ to the Klein-Gordon equation 
\[
\partial_x \partial_y u + \pi^2 u = 0,
\]
with $u(x,0) = f(x).$ Indeed, we shall define the operator $T$ by its action on functions $\theta \in C^{\infty}_c(\R)$ such that $\theta(0) = 0$ as 
\begin{equation}\label{eq:final-def-T-0}
T\theta(\xi) = -\pi \sqrt{|\xi|} \int_{\{y \colon \xi y < 0\}} \frac{\theta(y)}{\sqrt{|y|}} J_1(2\pi \sqrt{|\xi y|}) \, dy,
\end{equation}
where $J_{\nu}$ denotes the Bessel function of the first kind and order $\nu \in \R.$ Clearly, for $\theta \in C^{\infty}_c(\R),$ the operator $T$ above is well-defined and finite for each $\xi \in \R.$ 

The definition of this operator seems cumbersome and unrelated to the Klein-Gordon equation at first, but, as we shall see below, a reformulation of it shows that $T$ is the most ``natural'' operator to consider when analyzing Heisenberg uniqueness pairs involving the hyperbola $\Gamma$, and in fact matches the previously stated relationship. For that purpose, we start with the following result.

\begin{lemma}\label{lemma:radial-T} For $\theta \in C^{\infty}_c(\R)$ with $\theta(0)=0$, define $\Theta_{\pm}\colon\R^4\to\C$ by
\begin{equation}\label{eq:def-big-Theta} 
\Theta_{\pm}(u) = \frac{\theta(\pm|u|^2)}{|u|^2}.
\end{equation}
Then
\begin{equation}\label{eq:T-Fourier-dim-4}
T\theta(\xi) = - |\xi| \mathcal{F}_4(\Theta_{-\sgn(\xi)})(\sqrt{|\xi|}),
\end{equation}
where $\mathcal{F}_4$ denotes the Fourier transform in dimension 4. 
\end{lemma}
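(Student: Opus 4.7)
The plan is to recognize that $\Theta_{\pm}$ is genuinely a smooth radial function on $\R^4$, apply the classical radial-Fourier formula in dimension four, and then perform the substitution $y = \pm r^2$ to match the definition \eqref{eq:final-def-T-0} of $T\theta$.

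First I would handle the regularity of $\Theta_\pm$. Since $\theta \in C_c^\infty(\R)$ with $\theta(0) = 0$, one can write $\theta(y) = y\,\tilde\theta(y)$ for some $\tilde\theta \in C_c^\infty(\R)$ (just Taylor-expand at $0$). Hence
\[
\Theta_{\pm}(u) \;=\; \pm\,\tilde\theta(\pm|u|^2),
\]
which is a smooth, compactly supported, radial function on $\R^4$. In particular $\Theta_{\pm} \in \mathcal{S}(\R^4)$, and its $\mathcal{F}_4$-transform is itself a radial Schwartz function, so \eqref{eq:T-Fourier-dim-4} is literally pointwise meaningful.

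Second, I would apply the standard Bochner/Hankel-type formula for the Fourier transform of a radial function in dimension $d = 4$. Writing $F_0(r)$ for the radial profile of $\Theta_{\pm}$, namely $F_0(r) = \theta(\pm r^2)/r^2$, and using the $d=4$ case
\[
\mathcal{F}_4(F)(\zeta) \;=\; 2\pi\,|\zeta|^{-1}\int_0^\infty F_0(r)\, r^2\, J_1(2\pi r |\zeta|)\, dr,
\]
one obtains
\[
\mathcal{F}_4(\Theta_{\pm})(\zeta) \;=\; 2\pi\,|\zeta|^{-1} \int_0^\infty \theta(\pm r^2)\, J_1(2\pi r |\zeta|)\, dr.
\]

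Third, I would perform the change of variables $y = \pm r^2$, so that $dr = dy/(2\sqrt{|y|})$, with the integration running over $y > 0$ for the $+$ sign and over $y < 0$ for the $-$ sign. Setting $|\zeta| = \sqrt{|\xi|}$ turns the argument of $J_1$ into $2\pi\sqrt{|y\xi|}$, and multiplying through by $-|\xi|$ yields
\[
-|\xi|\,\mathcal{F}_4(\Theta_{\pm})(\sqrt{|\xi|}) \;=\; -\pi\sqrt{|\xi|} \int_{\{y\,:\,\pm y > 0\}} \frac{\theta(y)}{\sqrt{|y|}}\, J_1\bigl(2\pi\sqrt{|y\xi|}\bigr)\, dy.
\]
Matching this with \eqref{eq:final-def-T-0} is then just a sign check: the domain $\{\pm y > 0\}$ must equal $\{y\xi < 0\}$. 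Choosing the sign $-\sgn(\xi)$ in $\Theta_{\pm}$ does precisely this — for $\xi > 0$ we take $\Theta_-$ and integrate over $y < 0$, while for $\xi < 0$ we take $\Theta_+$ and integrate over $y > 0$. This gives \eqref{eq:T-Fourier-dim-4}.

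I do not expect a genuine obstacle here: the content is essentially identifying the one-dimensional operator $T$ with a four-dimensional radial Fourier transform via the squaring substitution. The only point requiring care is the bookkeeping of the signs — making sure that $-\sgn(\xi)$ correctly selects the half-line $\{y\xi < 0\}$ after the change of variables — and ensuring that $\theta(0)=0$ is used to guarantee that $\Theta_{\pm}$ is smooth at the origin of $\R^4$, so that the radial Fourier inversion formula applies without any regularization.
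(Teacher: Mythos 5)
Your proof is correct and follows essentially the same route as the paper: apply the $d=4$ radial Fourier (Bochner/Hankel) formula and substitute $y=\pm r^2$ to match the definition \eqref{eq:final-def-T-0}, with the sign $-\sgn(\xi)$ selecting the half-line $\{y\xi<0\}$. Your preliminary observation that $\theta(0)=0$ makes $\Theta_{\pm}=\pm\tilde\theta(\pm|u|^2)$ a radial Schwartz function is a small but welcome addition that the paper leaves implicit.
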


\begin{proof} We suppose that $\xi < 0,$ with the other case being entirely analogous. We recall the formula for the Fourier transform of a radial function $g:\R^4 \to \C$ in $L^1(\R^4)$: 
\[
\mathcal{F}_4(g)(\eta) = 2\pi \int_0^{\infty} g(s) \left( \frac{s}{|\eta|}\right) J_1(2 \pi |\eta| s) \, s \, ds.
\]
Changing variables $y = s^2$ in the definition \eqref{eq:final-def-T-0}, we get 
\begin{align*} 
T\theta (\xi)  &= -2\pi \sqrt{|\xi|} \int_0^{\infty} \theta(s^2) J_1(2\pi \sqrt{|\xi|} s) \, ds \cr 
        &= -2\pi |\xi| \int_0^{\infty} \frac{\theta(s^2)}{s^2} \left(\frac{s}{\sqrt{|\xi|}}\right) J_1(2\pi\sqrt{|\xi|}s) \, s \, ds,
\end{align*} 
which concludes the proof. 
\end{proof}

We are now able to prove our main equivalence for the definition of $T$, which enables us to connect it to Heisenberg Uniqueness pairs: 

\begin{proposition}\label{prop:def-T} Under the conditions above, if $\psi \in L^1$ is such that $\widehat{\psi} = \theta$, then 
\begin{equation}\label{eq:T-fourier-invert}
    T\theta(\xi) = \frac{1}{\sqrt{2}}\int_{\R} \psi(t) e^{-\pi i \xi/t} \, dt. 
\end{equation}
\end{proposition}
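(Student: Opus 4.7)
The plan is to substitute the Fourier-inversion formula $\theta = \widehat{\psi}$ directly into the definition \eqref{eq:final-def-T-0} of $T\theta$, exchange the order of integration, and compute the resulting inner integral in closed form by means of a classical Bessel--Gaussian identity. The assumption $\theta(0)=0$ will only enter at the very last step, where it cancels an otherwise spurious constant term.

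By the symmetry built into the definition, it suffices to treat $\xi<0$, in which case the domain in \eqref{eq:final-def-T-0} is $\{y>0\}$ and $|\xi|=-\xi$. Writing $\theta(y) = \frac{1}{\sqrt{2}}\int_{\R}\psi(t)\,e^{-\pi i ty}\,dt$, swapping integrals, and substituting $y = s^2$ reduces the problem to evaluating
\[
2\int_0^\infty J_1\bigl(2\pi\sqrt{|\xi|}\, s\bigr)\, e^{-\pi i t s^2}\, ds.
\]
The key ingredient is the classical identity
\[
\int_0^\infty J_1(as)\, e^{-ps^2}\, ds \;=\; \frac{1 - e^{-a^2/(4p)}}{a},\qquad \Re p>0,
\]
which can be derived by integration by parts using $a J_1(as)=-\frac{d}{ds}J_0(as)$ together with the Hankel--Gaussian formula $\int_0^\infty s\,J_0(as)\,e^{-ps^2}\,ds = \tfrac{1}{2p}e^{-a^2/(4p)}$, and then continued analytically to $p=\pi i t$. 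Applied with $a = 2\pi\sqrt{|\xi|}$ and simplified using $|\xi| = -\xi$, it yields
\[
T\theta(\xi) \;=\; -\frac{1}{\sqrt{2}}\int_\R\psi(t)\,dt \;+\; \frac{1}{\sqrt{2}}\int_\R \psi(t)\,e^{-\pi i \xi/t}\,dt.
\]
The first summand equals $-\theta(0)$, which vanishes by hypothesis, leaving precisely \eqref{eq:T-fourier-invert}. The case $\xi>0$ is handled identically after substituting $y=-s^2$ on $\{y<0\}$.

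The main technical subtlety is the Fubini step together with the use of the Bessel identity at the purely imaginary value $p=\pi i t$, since neither the double integral nor the Bessel integral converges absolutely there. I would address both obstacles simultaneously by inserting an auxiliary factor $e^{-\epsilon y}$ in the $y$-integral, which restores absolute convergence (using that $\psi$ is Schwartz, as the inverse Fourier transform of $\theta\in C_c^\infty$), carrying out the whole calculation with $p=\pi i t+\epsilon$, and finally passing to $\epsilon\to 0^+$ by dominated convergence against the explicit closed-form right-hand side of the Bessel identity.
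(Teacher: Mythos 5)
Your proof is correct. The integration by parts deriving the $J_1$ identity from Weber's integral $\int_0^\infty s\,J_0(as)e^{-ps^2}\,ds=\tfrac{1}{2p}e^{-a^2/(4p)}$ is sound (the boundary term at $s=0$ contributes exactly the constant $1/a$ that later becomes the $-\theta(0)$ term), the signs and normalizations work out to \eqref{eq:T-fourier-invert} on the nose, and your regularization $p=\pi i t+\epsilon$ simultaneously justifies Fubini (since $J_1$ is bounded and $\psi\in L^1$) and the continuation to the imaginary axis; the final passage $\epsilon\to0^+$ is legitimate by dominated convergence because $\real\bigl(-a^2/(4(\pi i t+\epsilon))\bigr)\le 0$ keeps the closed form bounded by $2/a$ uniformly in $\epsilon$. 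The route is, however, genuinely different in presentation from the paper's: the paper computes nothing directly, but instead rewrites $T\theta(\xi)=-|\xi|\,\mathcal{F}_4(\Theta_{-\sgn(\xi)})(\sqrt{|\xi|})$ via Lemma \ref{lemma:radial-T} and then quotes the formula for the four-dimensional Fourier transform of the radial profile $\theta(|u|^2)/|u|^2$ from \cite[Section~2.1]{Bakan-Hedenmalm-Montes-Radchenko-Viazovska}, where the hypothesis $\theta(0)=0$ enters for exactly the same reason as in your calculation. At bottom the two arguments coincide --- your Bessel--Gaussian identity is the order-$1$ Hankel transform of a Fresnel factor, i.e.\ the radial avatar of $\mathcal{F}_4(e^{-\pi i t|u|^2})$ --- but yours is self-contained and elementary, at the cost of redoing a computation the paper deliberately outsources in order to stay within the four-dimensional framework it reuses elsewhere (for instance in the proof of Lemma \ref{lemma:preliminary-2}).
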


\begin{proof} Let us suppose that $\xi < 0$, since the other case is entirely analogous. Then the considerations in \cite[Section~2.1]{Bakan-Hedenmalm-Montes-Radchenko-Viazovska} 
 together with Lemma \ref{lemma:radial-T} imply the following: if 
 $$\theta(t) = \frac{1}{\sqrt{2}} \int_{\R} e^{-\pi i t s} \psi(s) \, ds,$$  
 then the four-dimensional Fourier transform of $\Theta$ at $\eta$ is given by 
\[
-\frac{1}{\sqrt{2}|\eta|^2} \int_{\R} e^{\pi i |\eta|^2/t} \psi(t) \, dt. 
\]
Here, we made crucial use of the fact that $\theta(0) = \widehat{\psi}(0) = 0$, in order to use the results from \cite{Bakan-Hedenmalm-Montes-Radchenko-Viazovska}. Replacing then $\eta$ by $\sqrt{|\xi|}$ and using that $|\xi| = -\xi$ yields the claim. 
\end{proof}

\begin{remark} Proposition \ref{prop:def-T} is similar in spirit to Proposition 5.2.1 in \cite{Hedenmalm-Monte-Klein-Gordon}: indeed, by simply using a formal Fubini argument in \eqref{eq:T-fourier-invert} with that result, one may (formally) recover Proposition \ref{prop:def-T}. As a matter of fact, the formal argument outlined here may be made formal through multiplication with a decay-inducing factor and then taking limits. For brevity of exposition, however, we decided not to include such computations. 

A crucial feature of Proposition \ref{prop:def-T} is that it allows us to extend the definition of $T$ to general continuous, bounded functions $\theta$, satisfying $\theta = \widehat{\psi}$, where $\psi \in L^1$ and $\widehat{\psi}(0) = 0$. Indeed, we may always write $T\theta = \widehat{\varphi}$ whenever $\theta \in C^{\infty}_c(\R)$, where $\varphi(t) = t^{-2} \psi(1/t)$. Since $\varphi \in L^1$ whenever $\psi \in L^1$, we may use this equivalent definition to extend $T$ to the more general class of $\theta \in \big(\widehat{L^1}\big)_0 = \{\theta = \widehat{\psi},\psi \in L^1, \theta(0)=0\}.$

Finally, note that if $u$ is a solution to $\partial_x \partial_y u + \pi^2 u =0$, with the additional constraint that $u(x,0) = \theta(x)$, then if we may write $\widehat{\psi} = \theta$, it follows from a direct Fourier characterization argument that $u(x,y) =\frac{1}{\sqrt{2}} \int_{\R} \psi(t) e^{-\pi i (x \cdot t + y/t)} \, dt$, and Proposition \ref{prop:def-T} shows that $u(0,y) = T\theta(y)$, concluding hence the claim about the relationship between $T$ and the Klein-Gordon equation.  
\end{remark} 

In the next result we highlight some other important quantitative facts about the operator~$T$ in terms of decay and regularity: 

\begin{proposition}\label{prop:properties-T} Under the same hypotheses of Proposition \ref{prop:def-T}, the following assertions hold: 
\begin{enumerate}[\normalfont(I)]
    \item For all admissible $\theta$, we have that 
    \begin{align*}
    \|T\theta\|_{L^2(dt/|t|)} =  \,\, &\|\theta\|_{L^2(dt/|t|)}, \cr 
    \|T\theta\|_{L^2(\R)} = \frac{1}{\sqrt{2}\pi} &\|\theta'\|_{L^2(\R)}. 
    \end{align*}
    \item $(T\circ T)\theta = \theta$ holds for any $\theta$ admissible;
    \item $\textup{supp}(\theta) \subset \R_+$ if and only if $\textup{supp}(T\theta) \subset \R_-$. 
\end{enumerate}
\end{proposition}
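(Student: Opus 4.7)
The plan is to dispatch Parts (II) and (III) first as direct consequences of Proposition \ref{prop:def-T} and formula \eqref{eq:final-def-T-0}, then to tackle the two isometries in (I) via the four-dimensional radial representation of Lemma \ref{lemma:radial-T} together with Plancherel in $\R^4$.

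For (II), Proposition \ref{prop:def-T} identifies $T\theta$ with $\widehat{\varphi}$, where $\theta = \widehat{\psi}$ and $\varphi(t) = t^{-2}\psi(1/t)$. Since $\psi \in L^1$ forces $\varphi \in L^1$, the hypothesis of Proposition \ref{prop:def-T} applies to $T\theta$ as well, and iterating the substitution $\psi \mapsto \varphi$ produces $\tilde{\varphi}(t) = t^{-2}\varphi(1/t) = \psi(t)$, so that $T(T\theta) = \widehat{\psi} = \theta$. For (III), the restriction $\{y : \xi y < 0\}$ in the integration domain of \eqref{eq:final-def-T-0} shows that, for $\xi > 0$, the value $T\theta(\xi)$ depends only on values of $\theta$ on $\R_-$; hence $\supp \theta \subset \R_+$ forces $T\theta \equiv 0$ on $\R_+$, and the converse follows immediately from (II).

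For the first identity in (I), I would split $\|T\theta\|_{L^2(dt/|t|)}^2$ according to the sign of $\xi$, substitute $T\theta(\xi) = -|\xi|\,\mathcal{F}_4(\Theta_{-\sgn(\xi)})(\sqrt{|\xi|})$ from Lemma \ref{lemma:radial-T}, and change variables $\xi = r^2$ in each half. The weight $|\xi|^{2}\,d\xi/|\xi|$ transforms into $2r^{3}\,dr$, producing integrals of the shape $2\int_{0}^{\infty} |\mathcal{F}_4(\Theta_{\pm})(r)|^{2}\,r^{3}\,dr$, which, up to the $S^{3}$-surface factor $2\pi^{2}$, coincide with $\|\mathcal{F}_4(\Theta_{\pm})\|_{L^2(\R^4)}^{2}$. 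Four-dimensional Plancherel then yields $\|\Theta_{\pm}\|_{L^2(\R^4)}^{2}$, and the reverse radial substitution $s = r^{2}$ identifies this with $\int_{\mp \R_{+}} |\theta(y)|^{2}\,dy/|y|$; summing the two halves gives the isometry. For the second identity, Proposition \ref{prop:def-T} combined with Plancherel yields $\|T\theta\|_{L^{2}} = \|\varphi\|_{L^{2}}$; the substitution $s = 1/t$ turns $\|\varphi\|_{L^{2}}^{2}$ into $\int s^{2}|\psi(s)|^{2}\,ds$; and differentiating $\theta = \widehat{\psi}$ under the integral sign shows that $\theta'(\xi)$ is a constant multiple of the Fourier transform of $s\psi(s)$, so that a final application of Plancherel converts $\|s\psi\|_{L^{2}}^{2}$ into the required multiple of $\|\theta'\|_{L^{2}}^{2}$. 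The principal difficulty in the argument is clean bookkeeping of the constants -- the $\tfrac{1}{\sqrt{2}}$ in the chosen Fourier convention, the $|S^{3}| = 2\pi^{2}$ surface factor, and the Jacobians of the two changes of variables $\xi = r^{2}$ and $s = 1/t$ -- so that they fit together into the prescribed prefactors $1$ and $\tfrac{1}{\sqrt{2}\,\pi}$ without leaving spurious powers of $\sqrt{2}$ or $\pi$.
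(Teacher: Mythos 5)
Your proposal is correct and follows essentially the same route as the paper: the first identity in (I) via the four-dimensional radial representation of Lemma \ref{lemma:radial-T} plus Plancherel, the second via Proposition \ref{prop:def-T}, the substitution $s=1/t$, and differentiation of $\widehat{\psi}$ under the integral sign, (II) via the involutive nature of $\psi\mapsto t^{-2}\psi(1/t)$, and (III) from the sign restriction in \eqref{eq:final-def-T-0}. The only point left implicit is the constant bookkeeping you flag at the end (the paper's own proof is equally terse there), together with the brief verification, which the paper does spell out, that $T\theta$ is again admissible so that $T(T\theta)$ makes sense.
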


\begin{proof} We start with the proof of the first equality in (I). We have, by \eqref{eq:def-big-Theta}, \eqref{eq:T-Fourier-dim-4} and Plancherel, 
\[
\int_{\R_-} \frac{|T\theta(\xi)|^2}{|\xi|} \, d\xi =  2\int_{\R_+} |\mathcal{F}_4(\Theta)(t)|^2 t^3 \, dt =  2\int_{\R_+} |\Theta(t)|^2 t^3 \, dt = \int_{\R_+} \frac{|\theta(s)|^2}{|s|} \, ds.
\]
By doing the same for the integral over $\R_+$, we conclude that part. For the second equality, we may write, with the aid of Proposition \ref{prop:def-T}, 
\[
\|T\theta\|_2^2 = \int_{\R} \left|t^{-2}\psi(1/t)\right|^2 \, dt =  \int_{\R} t^2 |\psi(t)|^2 \, dt = \frac{1}{2\pi^2} \|\theta'\|_2^2,
\]
as desired. For (II), note that $T\theta$ is admissible if $\theta$ is: indeed, Part (I) shows that $T\theta \in L^2(dt/|t|)$. Moreover, since $T$ is given by the Fourier transform of the function $t^{-2} \psi(1/t) \in L^1$, it is continuous. Finally, an analogous argument to the one in Part (I) with the definition of $T$ shows that $(T\theta)' \in L^2$. Hence, the operator $(T\circ T)\theta$ is well-defined. 

For the desired identity, we first note that 
\[
\widecheck{T\theta}(y) =  y^{-2} \psi(1/y),
\]
and thus 
\[
\theta(t) = \widehat{\psi}(t) =  \frac{1}{\sqrt{2}} \int_{\R} y^{-2} \widecheck{T\theta}(1/y) e^{-\pi i y t} \, dy = \frac{1}{\sqrt{2}} \int_{\R} \widecheck{T\theta}(y) e^{-\pi i t/y}\, dy =  (T \circ T)\theta(t). 
\]
Finally, Part (III) follows directly from \eqref{eq:final-def-T-0}. 
\end{proof}

Note the following direct consequence of Proposition \ref{prop:properties-T}: $\theta \in L^2(\R) \cap H^1(\R)$ if, and only if, $T\theta \in L^2(\R) \cap H^1(\R)$. We use this fact to state the following main equivalence result, which will be crucial throughout the rest of the paper: 

\begin{proposition} The following assertions are equivalent:
\vspace{2mm}

\begin{enumerate}[\normalfont(I)]
    \item There exists $\psi \in L^2((1+t^2)dt)$ with $\widehat{\psi}(0) = 0$ and $\int_{\R} \psi(t) \, e^{-\pi i a_n t} \, dt = 0 = \int_{\R} \psi(t) \, e^{- \pi i b_n/t} \, dt = 0, \, \forall \, n \in \Z$;
    \item There exists $\theta \in L^2(dt/|t|)\cap L^2(\R) \cap H^1(\R)$, such that $\theta(a_n) = T\theta(b_n) = 0, \, \forall n \in \Z$.
\end{enumerate}
\end{proposition}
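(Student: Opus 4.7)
The plan is to set up the Fourier correspondence $\theta = \widehat{\psi}$ using the normalization \eqref{eq:fourier-d-1-special} and translate each ingredient of (I) into the language of $\theta$. The whole argument is a dictionary lookup once one combines Plancherel, the differentiation–multiplication rule $\theta'(\xi) = -\pi i \,\widehat{t\psi}(\xi)$, and the identification
\[
T\theta(\xi) = \frac{1}{\sqrt{2}} \int_{\R} \psi(t)\, e^{-\pi i \xi/t}\, dt
\]
from Proposition \ref{prop:def-T}; Proposition \ref{prop:embedding} will be used on the $\theta$-side to deduce the $L^2(dt/|t|)$ membership from $\theta \in H^1$ together with $\theta(0) = 0$.

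For (I) $\Rightarrow$ (II): note first that $\psi \in L^2((1+t^2)dt)$ unpacks to $\psi, t\psi \in L^2(\R)$, hence by Cauchy–Schwarz $\psi \in L^1(\R)$. Setting $\theta := \widehat{\psi}$, Plancherel applied to $\psi$ gives $\theta \in L^2$, and the differentiation identity together with $t\psi \in L^2$ gives $\theta' \in L^2$, so $\theta \in H^1(\R)$. The hypothesis $\widehat{\psi}(0) = 0$ reads $\theta(0) = 0$, at which point Proposition \ref{prop:embedding} yields $\theta \in L^2(dt/|t|)$. Finally, one has $\int_{\R} \psi(t) e^{-\pi i a_n t}\, dt = \sqrt{2}\,\theta(a_n)$ directly from the definition of $\theta$, and, by Proposition \ref{prop:def-T}, $\int_{\R} \psi(t) e^{-\pi i b_n/t}\, dt = \sqrt{2}\, T\theta(b_n)$, so the two vanishing conditions in (I) become those in (II).

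The converse is run in reverse: given $\theta$ satisfying (II), define $\psi := \widecheck{\theta}$. Plancherel on $\theta \in L^2$ gives $\psi \in L^2$, and Plancherel on $\theta' \in L^2$ gives $t\psi \in L^2$, so $\psi \in L^2((1+t^2)dt)$ and in particular $\psi \in L^1$ by Cauchy–Schwarz. Continuity of $\theta$ (from the $H^1$-continuity result just proved in this section) together with $\theta \in L^2(dt/|t|)$ forces $\theta(0) = 0$, hence $\widehat{\psi}(0) = 0$; the same two Fourier identifications as above then turn $\theta(a_n) = 0$ and $T\theta(b_n) = 0$ back into the two integral vanishings in (I). I do not anticipate a genuine obstacle: the one potentially delicate point — representing the ``hyperbolic'' Fourier integral $\int_{\R} \psi(t) e^{-\pi i b_n/t}\, dt$ as $\sqrt{2}\,T\theta(b_n)$ — is already the content of Proposition \ref{prop:def-T}, and the remainder is unpacking norms.
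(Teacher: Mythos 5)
Your proposal is correct and is exactly the argument the paper intends: the paper dismisses the proof as "a direct consequence of Proposition \ref{prop:embedding}, Proposition \ref{prop:def-T} and Proposition \ref{prop:properties-T}," and your write-up simply fills in those details (Plancherel and the differentiation rule for the norm translations, Proposition \ref{prop:def-T} for the hyperbolic Fourier integral, Proposition \ref{prop:embedding} for the $L^2(dt/|t|)$ membership, and the continuity-plus-$L^2(dt/|t|)$ observation to recover $\theta(0)=0$ in the converse). No gaps.
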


The proof of this result is a direct consequence of Proposition \ref{prop:embedding}, Proposition \ref{prop:def-T} and Proposition \ref{prop:properties-T}, and its statement will provide us with the right framework for both our positive and negative results. 

\section{Proof of Theorem \ref{thm:negative-gen}}\label{sec:proof-neg}

\subsection{Preliminary Reductions} In order to begin our process of building a function $\psi$ satisfying the hypotheses of Theorem \ref{thm:negative-gen}, we start by recalling some concepts and reductions from \cite{Kulikov-Nazarov-Sodin}. 

Indeed, first and foremost, we define the \emph{density of order $p$} (if it exists) of a sequence $Z = \{z_j\}_{j\in\Z}$ of points in the complex plane as 
$$\mathcal{D}_p(Z) = \lim_{r \to \infty} \frac{\#(Z\cap D_r(0))}{r^p}.$$ 
We also need the concept of \emph{smooth sequences}, as defined in \cite{Kulikov-Nazarov-Sodin}: a sequence $Z$ with $p$-density $\mathcal{D}_p(Z)$ is called \emph{$p$-smooth} if $z_j = r_j e^{i\theta}$, for some fixed $\theta \in [0,2\pi)$, and the following conditions are met: 
\begin{enumerate}[(I)]
    \item The function $r \mapsto \left| \#(Z \cap D_r(0)) - \mathcal{D}_p(Z) r^p\right|$ is bounded for $r>0$;
    \item There is $d>0$ such that $r_{j+1} - r_j \ge d(1+r_j)^{1-p}$. 
\end{enumerate}

Our first reduction will show that, if $A$ and $B$ are sequences as in the statement of Theorem \ref{thm:negative-gen}, then we may `expand' them a bit so that they become more regular, while still satisfying the same asymptotic properties in terms of density: 

\begin{lemma}\label{lemma:p-smooth-seq} Let $C = \{c_n\}_{n \ge 0}$ be an increasing sequence in $\R_+$, such that, for some $\sigma > 0$, 
\[
\liminf_{n \to \infty} (c_{n+1} - c_n) > \sigma .
\]
Then, for any $D > 1/\sigma$, there exists a $1$-smooth sequence $C' \supset C$ with $\mathcal{D}_1(C') = D$ and such that $|C'\setminus C| = +\infty$.
\end{lemma}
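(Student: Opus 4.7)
My approach is to construct $C'$ by merging $C$ with a uniform arithmetic progression of density $D$ and then thinning the progression by removing the points that fall too close to $C$. The condition $D > 1/\sigma$, equivalently $1/D < \sigma$, is crucial: the spacing of the progression is strictly smaller than the asymptotic spacing of $C$, so for $n$ large each $c_n$ claims exactly one progression point in a ``proximity zone'', and these zones are pairwise disjoint.

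Concretely, set $\tilde{C} := \{j/D : j \in \Z_{\ge 0}\}$ and pick $N_0$ so that $c_{n+1} - c_n > \sigma$ for every $n \ge N_0$. For such $n$, let $J_n$ be the unique integer with $J_n/D \in P_n := [c_n - 1/(2D),\, c_n + 1/(2D))$; since the $P_n$ have length $1/D < \sigma$ and adjacent centres are more than $\sigma$ apart, the intervals $P_n$ are pairwise disjoint and the integers $J_n$ are distinct and increasing. Define
\[
B := \{J_n/D : n \ge N_0\} \setminus C, \qquad C' := (C \cup \tilde{C}) \setminus B.
\]
Then $C \subseteq C'$ by construction, since no point of $C$ is ever removed from the union.

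Three verifications remain. For condition~(II) with $p=1$ (the uniform gap), the neighbours of any $c_n$ ($n \ge N_0$) in $C'$ are either the adjacent $c_{n \pm 1}$, at distance $> \sigma > 1/(2D)$, or the surviving progression points $(J_n \pm 1)/D$, which sit at distance $\ge 1/(2D)$ from $c_n$ because $c_n \in P_n$; surviving progression points with no intervening $c_n$ are exactly $1/D$ apart. Hence the infimum of consecutive gaps in the tail of $C'$ is $\ge 1/(2D) > 0$. For condition~(I), each $c_n$ with $n \ge N_0$ contributes exactly one element to $B \sqcup (C \cap \tilde{C})$ (depending on whether $J_n/D$ equals $c_n$ or not), so up to boundary effects one has
\[
\#(B \cap [0,r]) + \#\bigl((C \cap \tilde{C}) \cap [0,r]\bigr) = \#(C \cap [0,r]) + O(1);
\]
combined with $\#(\tilde{C} \cap [0,r]) = Dr + O(1)$ and inclusion--exclusion, this gives $\#(C' \cap [0,r]) = Dr + O(1)$, yielding both $\mathcal{D}_1(C') = D$ and the bounded-error condition. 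Finally, the $\liminf$ hypothesis implies $\#(C \cap [0,r]) \le r/\sigma + O(1)$, so $\#((C' \setminus C) \cap [0,r]) \ge (D - 1/\sigma)\,r + O(1) \to \infty$, giving $|C' \setminus C| = \infty$.

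The main obstacle I anticipate is the bookkeeping in the counting step, namely verifying that the correspondence $n \leftrightarrow J_n/D$ between $c_n$'s and their claimed progression points really produces an $O(1)$ error rather than a growing one, particularly at the boundary of the counting interval $[0,r]$. A secondary, mild subtlety is that the hypothesis only controls $\liminf$, so finitely many small gaps of $C$ at low indices may persist in $C'$; they contribute only $O(1)$ to the counting function and can be absorbed into the implicit constants and the choice of $d$ in condition~(II).
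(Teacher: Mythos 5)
Your construction is correct. Note that the paper does not actually prove this lemma: it simply cites \cite[Claim~7]{Kulikov-Nazarov-Sodin}, so there is no in-paper argument to compare against; your grid-merging proof (superimpose the progression $\{j/D\}$ and delete the unique grid point captured by each window $P_n=[c_n-\tfrac{1}{2D},c_n+\tfrac{1}{2D})$) is a valid, self-contained argument for the $p=1$ case and is of the same elementary nature as the cited claim. The two points you flag as potential obstacles do resolve as you expect: since $1/D<\sigma$, the windows $P_n$ ($n\ge N_0$) are pairwise disjoint, each $c_n$ is matched to exactly one grid point, and the only mismatches in the correspondence occur at the single index $n=N_0$ and at the boundary of $[0,r]$, so the counting error is genuinely $O(1)$; and the surviving grid points adjacent to $c_n$ are $(J_n\pm1)/D$, which lie at distance at least $1/D-1/(2D)=1/(2D)$ from $c_n$, so the tail gap bound holds, while the finitely many points below $c_{N_0}$ have a positive minimal gap that can be absorbed into $d$. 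One cosmetic remark: to keep $C'\subset\R_+$ you may want to start the progression at $j=1$, but this changes nothing.
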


We refer the reader to \cite[Claim~7]{Kulikov-Nazarov-Sodin} for a proof of this fact. Using Lemma \ref{lemma:p-smooth-seq}, we then replace $A$ and $B$ by $A'$ and $B'$, where $A' \cap (0,+\infty)$ is a sequence given by Lemma \ref{lemma:p-smooth-seq} for $A \cap (0,+\infty)$, with its definition for negative values being analogous. The set $B'$ is built in an entirely analogous way from $B$.   By the Lemma again, we may still assume that $\mathcal{D}_1(A'), \mathcal{D}_1(B') < 1$. We shall then work with $A'$, $B'$ throughout the steps of the proof below.

\subsection{Entire functions with prescribed zeros} Let $k:[-\pi,\pi] \to \R$ be defined by 
\begin{equation}\label{eq:k-def-2}
k(\theta) = (\pi \beta) \sin(2 \theta) - \gamma \cos(2 \theta)
\end{equation} whenever $\theta \in [0,\pi/2],$ and extended to $\theta \in [-\pi,\pi]$ such that it becomes even and symmetric with respect to $\pi/2.$ For such $k,$ we call a discrete set $\mathcal{Z} = \mathcal{Z}_1^+ \cup \mathcal{Z}_1^{-} \cup \mathcal{Z}_2^+ \cup \mathcal{Z}_2^{-} \subset \C,$ where $\mathcal{Z}_j^{\pm} \subset \left\{ \arg (z) = \frac{(j\pm1) \pi}{2} \right\},$ \emph{$k-$regular} if:

\begin{enumerate}[(I)]
\item $\mathcal{Z}_j^{\pm}$ has density $m_j/4\pi$ with respect to exponent $2.$ That is, it holds that 
$$|\mathcal{Z}_j^{\pm} \cap (0,r e^{i\theta_j^{\pm}})| \sim r^2m_j/ 4\pi$$
as $r \to \infty$, where $m_1 = 2 \pi \beta, m_2 = 2\gamma,$ with $\theta_j^{\pm}=\frac{(j\pm1) \pi}{2}$;

\vspace{2mm}

\item the disks $\{D_z\}_{z \in \mathcal{Z}} = \{B(z,c\cdot (1+|z|)^{-1})\}_{z \in \mathcal{Z}}$ are all \emph{disjoint} for some $c>0.$  
\end{enumerate} 
\vspace{2mm}

We now state the following result by Levin, which will be crucial in our construction: 

\begin{theorem}[Theorem~5, Ch. II, \cite{Levin}]\label{thm:levin}
Let \( k \) be as in \eqref{eq:k-def-2}. Then, for any \( k \)-regular set \( \mathcal{Z} \), there exists an entire function \( S \), whose zeroes are simple and coincide with \( \mathcal{Z} \), such that, for every \( \varepsilon > 0 \),
\begin{align}
    |S(re^{i\theta})| \le C_{\varepsilon} e^{(k(\theta) + \varepsilon)r^2},  & \text{ everywhere in } \mathbb{C}, \tag{I} \\
    |S(re^{i\theta})| \ge c_{\varepsilon} e^{(k(\theta) - \varepsilon)r^2}, & \text{ whenever } w = re^{i\theta} \not\in \cup_{z \in \mathcal{Z}} D_z. \tag{II}
\end{align}
\end{theorem}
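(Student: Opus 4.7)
The plan is to construct $S$ as a Weierstrass canonical product of genus~$2$ with zero set $\mathcal{Z}$ and then verify that its Phragm\'en--Lindel\"of indicator matches $k$. Since $\mathcal{Z}$ has density of order~$2$, the counting function grows like $r^2$, so $\sum_{z_n \in \mathcal{Z}} |z_n|^{-3} < \infty$, and therefore the infinite product
\[
S(z) \;=\; \prod_{z_n \in \mathcal{Z}} E_2\!\left(\tfrac{z}{z_n}\right), \qquad E_2(w) \;=\; (1-w)\exp\!\bigl(w + \tfrac{w^2}{2}\bigr),
\]
converges locally uniformly to an entire function of order at most~$2$ whose zeros are exactly $\mathcal{Z}$ and all simple. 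This part is purely Weierstrass--Hadamard and uses only condition (I) of $k$-regularity.

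The heart of the argument is to identify the indicator $h_S(\theta) = \limsup_{r\to\infty} r^{-2}\log|S(re^{i\theta})|$ with $k(\theta)$. The approach is to handle each of the four rays separately: for a sequence on the ray $\arg z = \alpha$ with angular density $m/(4\pi)$ of order~$2$, a classical computation for canonical products of genus~$2$ gives that its contribution to the indicator at angle $\theta$ is of the form
\[
h_{\alpha,m}(\theta) \;=\; \frac{m}{4\pi}\, H\bigl(2(\theta-\alpha)\bigr),
\]
where $H$ is an explicit, even, $2\pi$-periodic function obtained by integrating $\log|E_2|$ against the angular density. Plugging in the data $m_1 = 2\pi\beta$ along the horizontal axis and $m_2 = 2\gamma$ along the vertical axis, and using the reflection symmetries implicit in condition~(I), the sum of the four contributions collapses on $[0,\pi/2]$ to $\pi\beta\sin(2\theta) - \gamma\cos(2\theta) = k(\theta)$. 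The evenness of $k$ and its symmetry about $\pi/2$ then propagate the identification to all of $[-\pi,\pi]$.

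Once $h_S = k$ is established, the upper bound (I) is an immediate consequence of the standard growth theorem for canonical products, which yields $\log|S(re^{i\theta})| \le (k(\theta)+\varepsilon)r^2$ uniformly for $r$ large. The lower bound (II) is the delicate part, and here condition~(II) of $k$-regularity enters in an essential way: the Levin discs $D_z$ of radii $\sim (1+|z|)^{-1}$ are pairwise disjoint, so outside their union one has $|1 - z/z_n| \gtrsim (1+|z_n|)^{-1}$ for every $n$. Combining this bound on the factors $z_n$ nearby $z$ (whose product loses only a harmless polynomial factor) with a tail estimate that mirrors the integral representation used to compute $h_S$ produces the matching lower bound. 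The main obstacle I expect is the explicit ray-by-ray computation that $\sum_{j,\pm} h_{\theta_j^{\pm}, m_j}(\theta)$ collapses to precisely \eqref{eq:k-def-2}: once that trigonometric identity is verified, the asymptotic upper bound and the minimum-modulus lower bound off $\bigcup_z D_z$ are both standard consequences of canonical-product theory, as carried out in \cite{Levin}.
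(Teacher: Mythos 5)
The paper does not actually prove this statement: it is quoted from Levin's book, and the authors explicitly defer to \cite{Levin} and to the self-contained proof in the appendix of \cite{Kulikov-Nazarov-Sodin}. So your proposal is being measured against the standard construction rather than against an argument in the text. Your overall architecture (canonical product with zero set $\mathcal{Z}$, identification of the Phragm\'en--Lindel\"of indicator with $k$, and a minimum-modulus bound off the union of the Levin discs) is indeed the right one, and the convergence of the genus-$2$ product from $\sum_n |z_n|^{-3}<\infty$ is fine.

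There is, however, a genuine gap at the heart of your argument: for \emph{integer} order equal to the genus, the bare product $\prod_n E_2(z/z_n)$ is in general \emph{not} of finite type $2$, and your ray-by-ray formula $h_{\alpha,m}(\theta)=\frac{m}{4\pi}H(2(\theta-\alpha))$ with a universal kernel $H$ is exactly the statement that fails. Concretely, for zeros of order-$2$ density $\Delta$ on a single ray $\arg z=\alpha$, the dominant contribution of the factors with $|z_n|\le |z|$ is $\tfrac12\mathrm{Re}(z^2)\sum_{|z_n|\le r}z_n^{-2}\sim \Delta\cos\bigl(2(\theta-\alpha)\bigr)\,r^2\log r$, since $\sum_{|z_n|\le r}|z_n|^{-2}$ diverges logarithmically when $n(r)\sim\Delta r^2$. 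A single ray therefore contributes a term of size $r^2\log r$, not $r^2$, and the four rays cancel these divergences only under the Lindel\"of balance condition $\sum_j\Delta_j e^{-2i\alpha_j}=0$; otherwise the product grows like $e^{c r^2\log r}$ in some directions and the upper bound (I) is false for it. This is precisely why Levin's construction is not ``purely Weierstrass--Hadamard'': one works with truncated products multiplied by compensating exponentials $e^{\alpha_R z+\beta_R z^2}$, with $\alpha_R,\beta_R$ tied to the partial sums $\sum_{|z_n|\le R}z_n^{-1}$ and $\sum_{|z_n|\le R}z_n^{-2}$, and passes to the limit; the paper itself records this form when it writes $S_{i,R}(z)=e^{\alpha z+\beta z^2}\prod_{\lambda\in\mathcal{Z}_i\cap D_R(0)}E_2(z/\lambda)$ in Section 3. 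Your proof needs this compensation (equivalently, a verification that the combined zero set satisfies Lindel\"of's condition after the quadratic exponential is inserted) before the indicator computation and hence before either bound (I) or (II) can be established. A minor additional point: off the disc $D_{z_n}$ one gets $|1-z/z_n|\gtrsim(1+|z_n|)^{-2}$ rather than $(1+|z_n|)^{-1}$, though this still only costs a polynomial factor and does not affect the shape of (II).
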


We refer the reader to either the original work of Levin \cite{Levin} or the recent manuscript \cite{Kulikov-Nazarov-Sodin}, where a self-contained proof of this fact, which works in our context, is available in the appendix. 

As consequence of (I) and (II) in Theorem~\ref{thm:levin},  we have that, for $z \in \mathcal{Z}_j,$ 
\begin{equation}\label{eq:bound-derivative-S}
|S'(z)| \ge c_{\varepsilon} e^{(k(\arg z) - \varepsilon)|z|^2}.
\end{equation}
This can be seen, for instance, by applying the mean value property to the harmonic function $\log(|S(w)/(w-z)|)$ in the disk $D_z.$

\subsection{Almost-interpolation bases} Our next step is to construct functions by using Theorem \ref{thm:levin} which will give rise to our iteration scheme. We relabel the sequences $A$ and $B$ so that $$A \cap (0,+\infty) = \{a_k\}_{k \ge 1},\qquad B \cap (0,+\infty) = \{b_k\}_{k \ge 1}. $$
 In what follows, we will mainly deal with the set $A$, but the same construction can be repeated to~$B$ by a verbatim adaptation. 
 
 We define $\tilde{A}_1$ to be the sequence of elements of the form $\sqrt{a_n}$ whenever $n > 0$, and we let $\tilde{A}_2$ analogously be the sequence of elements of the form $\sqrt{-a_n}$ whenever $n \le 0$. 

We start by taking $\mathcal{Z}_1^+(1) = \tilde{A}_1,$ and setting $\mathcal{Z}_1^{-}(1) = -\mathcal{Z}_1^{+}(1).$ We then add two other sets $\mathcal{Z}_2^{\pm}(1)$ so that the properties in the definition of $k$-regularity are fulfilled, and $-\mathcal{Z}_2^{+}(1) = \mathcal{Z}_2^{-}(1).$ The exact choice of these sets is not important, as long as the set $\mathcal{Z}(1) := \cup_{\pm} \cup_{j=1,2} \mathcal{Z}_j^{\pm}(1)$ obtained in the end is origin-symmetric and is $k_1$-regular, for some function $k_1$ as above. By repeating the same construction for $\mathcal{Z}_1^+(2) = \tilde{A}_2$, we obtain in a similar manner a $k_2$-regular set $\mathcal{Z}(2)$ associated to a (possibly) different function $k_2$. Note, however, that the functions $k_1, k_2$ are (crucially) only tied to $\mathcal{Z}(1), \, \mathcal{Z}(2)$ through the densities of these sets. 

We then use Theorem \ref{thm:levin} for $\mathcal{Z}(i), i \in \{1,2\}.$ This yields an entire function $S_i$ in each case with the desired zero properties at the sets $\mathcal{Z}(i)$, respectively. Moreover, we may assume that such a function is always \emph{even}. This follows directly from the construction of $S_i$, as given either in \cite{Levin} or \cite{Kulikov-Nazarov-Sodin}: if we let $E_2(z) = (1-z)e^{z + z^2/2}$, then $S_i$ is obtained as a limit as $R \to \infty$ of the functions 
\[
S_{i,R}(z) = e^{\alpha z + \beta z^2} \prod_{\lambda \in \mathcal{Z}_i \cap D_R(0)} E_2(z/\lambda),
\]
where $\alpha,\beta \in \C.$ We claim that, if $S_i$ satisfies the conditions of Theorem \ref{thm:levin}, then so does $R_i(z) := e^{- \alpha z} \cdot S_i(z).$ Indeed, the zero assumption does not change by multiplying by complex exponentials. Moreover, since $|e^{\alpha z}| \le C_{\varepsilon} e^{\varepsilon |z|^2}$ for all $\varepsilon > 0$, the bounds (I) and (II) in Theorem \ref{thm:levin} are fulfilled. Therefore, we may suppose without loss of generality that $\alpha = 0$ in the definition of $S_{i,R}$. 

Since we have shown that we may take $S_{i,R}$ to be even by construction for each $R$, then we may also take $S_i$ to be even, as desired. Consider then, for a fixed $a \in \tilde{A}_i$, the function 
\[
\mathcal{S}_{i,a}(z) = \frac{S_i(z)}{z^2 - a^2}. 
\]
Since $S_i$ is even, this new function is again an entire function. By applying the maximum modulus principle to this function on a small disk around $a,$ and using the bounds we have on $S_i$ outside it, we can easily conclude that it satisfies 
\begin{equation}\label{eq:S-bound}
|\mathcal{S}_{i,a}(z)| \lesssim C_{\varepsilon} e^{(k(\theta)+\varepsilon)|z|^2}
\end{equation}
in all of $\C$, uniformly on $a \in \tilde{A}_i$. 

Now, for any $n \in \Z$, let $a(n)$ be the positive real number for which $a(n)^2 = \pm a_n$, $a(n) \in \tilde{A}_1 \cup \tilde{A}_2$. We then define a function $\theta_n:\R \to \C$ in the following way: if $a(n) \in \tilde{A}_1$, we define 
\[
\theta_n(t) = \begin{cases}
               t \cdot \mathcal{S}_{1,a(n)}(\sqrt{t}), & \text{ if } t > 0, \cr 
               0, & \text{ if } t < 0. 
             \end{cases} 
\]
The definition of $\theta_n$ for $a(n) \in \tilde{A}_2$ is analogous. These functions will serve as ``approximate'' interpolation functions: $\theta_{n}$ vanishes on $A$, except for the point $a_n$. Although no analogous interpolation property holds for $T(\theta_{n})$, we have a (weak) surrogate of that through the following strong decay bounds:

\begin{lemma}\label{lemma:preliminary-2} There exist absolute constants $\alpha' > \alpha'' > 0$ such that the following holds. For each $x \in \R$, each $\xi \in \R$ and each $n \in \Z$, we have   
\[
|\theta_{n}(x)| \le C e^{-\alpha''|x|}, \, \, \, \, 
|T(\theta_{n})(\xi)| \le C e^{-\alpha'|\xi|}. 
\] 
\end{lemma}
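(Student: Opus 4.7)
\emph{Plan of proof.} The proof has two distinct components, corresponding to the two bounds in the lemma.

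First, the bound on $\theta_n$ follows directly from its definition. Suppose $a(n) \in \tilde{A}_1$ (the case $a(n) \in \tilde{A}_2$ being symmetric), so $\theta_n(t) = t\cdot\mathcal{S}_{1,a(n)}(\sqrt{t})$ for $t > 0$ and $\theta_n(t) = 0$ otherwise. Applying the bound \eqref{eq:S-bound} at $z = \sqrt{t}$, where $\arg z = 0$ and $k(0) = -\gamma$, yields $|\mathcal{S}_{1,a(n)}(\sqrt{t})| \lesssim_{\varepsilon} e^{-(\gamma - \varepsilon)t}$, and consequently $|\theta_n(t)| \lesssim e^{-\alpha'' |t|}$ for any $\alpha'' \in (0,\gamma)$, uniformly in $n$ by the uniformity of \eqref{eq:S-bound} in $a$.

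Second, for $T\theta_n$, I would invoke Proposition~\ref{prop:properties-T}(III) to conclude that $T\theta_n$ is supported on $\R_-$, and then use Lemma~\ref{lemma:radial-T} to write $T\theta_n(\xi) = -|\xi|\cdot\mathcal{F}_4(\Theta_+)(\sqrt{|\xi|})$ for $\xi < 0$, where $\Theta_+(u) = \mathcal{S}_{1,a(n)}(|u|)$. Since $\mathcal{S}_{1,a(n)}$ is even (as $S_1$ was constructed to be), we write $\Theta_+(u) = F(u \cdot u)$ with $F(z) := \mathcal{S}_{1,a(n)}(\sqrt{z})$ entire of order $1$ and indicator $h(\phi) = k(\phi/2) = \pi\beta\sin\phi - \gamma\cos\phi$; note that $h(0) = -\gamma$ (Gaussian decay of $F$ on $\R_+$) and $h(\pi) = \gamma$ (Gaussian growth on $\R_-$). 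The problem thus reduces to proving that $|\mathcal{F}_4(\Theta_+)(\eta)| \lesssim e^{-c|\eta|^2}$ for some absolute $c > 0$, which combined with $|\eta| = \sqrt{|\xi|}$ gives the desired exponential decay of $T\theta_n$.

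The strategy for this Gaussian decay is to extend $\Theta_+$ to an entire function on $\C^4$ via $\Theta_+(u) = F(u_1^2+\cdots+u_4^2)$ and shift the contour of integration from $\R^4$ to $\R^4 - is\hat{\eta}$, with $\hat{\eta} = \eta/|\eta|$ and $s > 0$ a parameter to be optimized. The phase factor then contributes $e^{-2\pi s|\eta|}$, while on the shifted contour $(v - is\hat{\eta})\cdot(v - is\hat{\eta}) = |v|^2 - s^2 - 2is v_{\parallel}$ with $v_{\parallel} = v\cdot\hat{\eta}$. Splitting the $v$-integral into the regime $|v| \gg s$ (where the argument of this inner product is close to $0$ and $|F| \lesssim e^{-(\gamma-\varepsilon)|v|^2}$, giving integrability at infinity) and the regime $|v| \lesssim s$ (where the inner product stays in a bounded region and $|F|$ is controlled by $e^{cs^2}$ using the indicator $h$), we arrive at $|\mathcal{F}_4(\Theta_+)(\eta)| \lesssim e^{-2\pi s|\eta| + cs^2}$. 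Optimizing in $s \sim |\eta|$ then produces the required Gaussian decay.

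The principal technical difficulty will be the careful control of $|F|$ on the shifted contour in the transitional regime $|v| \sim s$, where the argument of the inner product sweeps across angles at which the indicator $h$ may exceed $\gamma$, as well as the rigorous justification of the contour shift itself in $\C^4$. Any positive $c > 0$ obtained from this analysis will suffice for the lemma: by choosing $\alpha''$ sufficiently small and $\alpha'$ accordingly, one arranges the required ordering $\alpha' > \alpha'' > 0$. Uniformity of all constants in $n$ is inherited from that of \eqref{eq:S-bound} in $a \in \tilde{A}_i$.
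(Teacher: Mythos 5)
Your plan follows essentially the same route as the paper: support via Proposition~\ref{prop:properties-T}(III), the four-dimensional radial representation \eqref{eq:T-Fourier-dim-4}, writing $\mathcal{S}_{1,a}(z)=H_{1,a}(z^2)$, a contour shift $x\mapsto x+iy$ with $y=-t\eta$, control of the integrand by the Levin indicator, and optimization in the shift parameter. The transitional-regime difficulty you flag is resolved in the paper without any case split, by combining the bound $k(\theta)<\pi\omega^2\sin^2\theta$ (from \cite[Claim~8]{Kulikov-Nazarov-Sodin}, valid after choosing $\gamma=\pi s\omega^{-2}$ with $s<1$) with the elementary identity $\sin^2(\theta/2)\,r=\tfrac12(r-r\cos\theta)\le|y|^2$ for $re^{i\theta}=\|x+iy\|^2$, which bounds the supremum over the whole shifted contour at once by $e^{\pi\omega^2|y|^2}$.
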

\begin{proof} Without loss of generality, let us assume that $a_n > 0.$ By Proposition \ref{prop:properties-T}, we have that $\text{supp}(T\theta_n) \subset \R_-,$ and hence it is enough to prove the bound for $\xi < 0.$ We shall write $a:= a(n)$ throughout the proof. 

We then recall \eqref{eq:T-Fourier-dim-4}, which, for $\Theta_{n}(u) = \frac{\theta_{n}(|u|^2)}{|u|^2}$ for $u \in \R^4$, allows us to write 
\begin{equation}\label{eq:T-fourier}
T(\theta_{n})(\xi) = - |\xi| \cdot \mathcal{F}_4(\Theta_{n})(\sqrt{|\xi|}).
\end{equation}
By recalling the definition of $\theta_{n}$, we have that 
\[
\mathcal{F}_4(\Theta_n)(\eta) = \int_{\R^4} \mathcal{S}_{1,a}(|x|) e^{-2 \pi i x \cdot \eta} \, d x.  
\]
On the other hand, $\mathcal{S}_{1,a}$ is an \emph{even} function of $z$, which allows it to be written as $\mathcal{S}_{1,a}(z) = H_{1,a}(z^2),$ for some entire function $H_{1,a}$. Note that $|H_{1,a}(re^{i\theta})|\le e^{-cr}$ if $\theta$ is close to either $0$ or $\pi$, by the properties of $\mathcal{S}_{i,a}$.  

Hence, taking these considerations into account, we may write the integral defining $\mathcal{F}_4(\Theta_n)$ through Fubini's theorem as a four-fold integral, as follows: 
\begin{align*}
\mathcal{F}_4(\Theta_n)(\eta) = \int_{\R^4} H_{1,a}(x_1^2 + x_2^2 + x_3^2 + x_4^2) \, e^{-2\pi i x \cdot \eta} \, dx. 
\end{align*}
By the previously mentioned decay properties of $H_{1,a},$ we are able to change contours in each one of those one-dimensional integrals. This allows us to write
\begin{equation}\label{eq:fourier-4}
\mathcal{F}_4(\Theta_n)(\eta) = \int_{\R^4} H_{1,a}(\|x+iy\|^2)e^{-2 \pi i (x+iy) \cdot \eta } \, dx.
\end{equation}
Here, we use the convention that, for $x,y \in \R^4$, $\|x+iy\|^2 := \sum_{j=1}^4 (x_j + i y_j)^2.$ From \eqref{eq:fourier-4}, we may bound $|\mathcal{F}_4(\Theta_n)(\eta)|$ as follows. First, let $y = -t \eta,$ where $t>0$ will be chosen later. Then 
\begin{align} 
|\mathcal{F}_4(\Theta_n)(\eta)| & \le e^{-2\pi |y||\eta|} \left| \int_{\R^4} |H_{1,a}(\|x+iy\|^2)| e^{-\varepsilon r} e^{\varepsilon r}\, d x \right| \cr 
 \label{eq:first-bound-H-lambda-1} & \le e^{-2 \pi |y||\eta|} \sup_{x \in \R^4} \left( | H_{1,a}(\|x+iy\|^2) e^{\varepsilon r} | \times \left( \int_{\R^4} e^{-\varepsilon r} \, dx \right) \right). 
\end{align} 
where we write $\|x+iy\|^2 = |x|^2 - |y|^2 + 2 i \langle x, y \rangle =: re^{i\theta}.$ Here, $\varepsilon>0$ will be chosen later. We now note that 
$$r = \left( (|x|^2 - |y|^2)^2 + 4 \langle x,y \rangle^2 \right)^{1/2} \ge ||x|^2 - |y|^2|, $$
which implies 
\[
\int_{\R^4} e^{-\varepsilon r} \, dx \le C \int_0^{\infty} e^{-\varepsilon|s^2 - |y|^2|} \, s^3 \, ds = \frac{C}{2} \int_0^{\infty} e^{-\varepsilon|s' - |y|^2|} \, s' \, ds'.
\]
\vspace{1mm}
On the other hand, it is not hard to see that the expression on the right-hand side above is bounded by $C_\varepsilon (1+|y|^2)$. Thus, from \eqref{eq:first-bound-H-lambda-1}, we obtain 
\[
|\mathcal{F}_4(\Theta_n)(\eta)| \le C_{\varepsilon}(1+|y|^2) e^{-2\pi |y||\eta|} \sup_{x \in \R^4} \left( | H_{1,a}(\|x+iy\|^2)| e^{\varepsilon r} \right).
\]
From the bounds available on $\mathcal{S}_{1,a}$, we conclude that $|H_{1,a}(re^{i\theta})| \le C_{\varepsilon} e^{(k(\theta/2) + \varepsilon)r}$. Thus, 
$$|H_{1,a}(\|x+iy\|^2)| e^{\varepsilon r} \le C_{\varepsilon} e^{(k(\theta/2) + 2\varepsilon)r}. $$ 
We now note that, if $s<1$ is sufficiently close to 1, and $\gamma = \pi s \omega^{-2},$ with $\omega$ sufficiently large, then any function $k$ defined as in \eqref{eq:k-def-2} satisfies
\begin{equation}\label{eq:bound-on-k}  
k(\theta) < \pi \omega^2 \sin^2(\theta), \, \text{ whenever } \theta \in [0,\pi/2]. 
\end{equation} 
For a proof of this fact, we refer the reader to \cite[Claim~8]{Kulikov-Nazarov-Sodin}. Since the inequality in \eqref{eq:bound-on-k} is \emph{strict} for the whole compact interval $[0,\pi/2],$ if we choose $\varepsilon > 0$ small enough, we will have $k(\theta/2) + 2\varepsilon \le \pi \omega^2 \sin^2(\theta/2),$ whenever $\theta \in [-\pi,\pi].$ Thus, for such $\varepsilon > 0$, we obtain the following bound:
\[
|\mathcal{F}_4(\Theta_n)(\eta)| \le C' (1+|y|^2) \sup_{x \in \R^4} e^{2 \pi \left(\frac{\omega^2}{2} \sin^2(\theta/2) r - |y||\eta|\right)}. 
\]
On the other hand, $r \le |x|^2 + |y|^2$ and $\sin^2(\theta/2) = \frac{1}{2}(1-\cos(\theta)),$ which implies that $\sin^2(\theta/2) r = \frac{1}{2} (r - r \cos(\theta)) \le \frac{1}{2}(|x|^2 + |y|^2 - (|x|^2 - |y|^2)) = |y|^2.$ This shows finally that 
\begin{equation}\label{eq:almost-final-bound-G-lambda}
|\mathcal{F}_4(\Theta_n)(\eta)| \le C'(1+|y|^2) \sup_{x \in \R^4} e^{2\pi \left( \frac{\omega^2}{2} |y|^2 - |y||\eta|\right)}.
\end{equation}
Recalling that $y = -t \eta, \, t>0,$ and optimizing on the parameter~$t$, we get from \eqref{eq:almost-final-bound-G-lambda} that 
\begin{equation}\label{eq:final-bound-4-dim-fourier} 
|\mathcal{F}_4(\Theta_n)(\eta)| \lesssim (1+|\eta|^2) e^{-\frac{\pi}{\omega^2}|\eta|^2} = (1+|\eta|^2) e^{-\frac{\gamma}{s} |\eta|^2}.
\end{equation} 
Gathering \eqref{eq:final-bound-4-dim-fourier} with \eqref{eq:T-fourier}, the definition of $\theta_n$, \eqref{eq:S-bound} and \eqref{eq:k-def-2}, one is able to finish the proof. 
\end{proof}

With Lemma \ref{lemma:preliminary-2} at hand, we define
$$\varrho_n(t) := \frac{\theta_n(t)}{a(n)^2 \cdot \mathcal{S}_{i,a(n)}(a(n))}.$$
By \eqref{eq:bound-derivative-S}, it follows that $|\mathcal{S}_{i,a(n)}(a(n))| \ge c_{\varepsilon} e^{-(\gamma+\varepsilon)|a_n|^2}$ holds for each $n$, which implies, together with Lemma \ref{lemma:preliminary-2}, that there are $\alpha' > \alpha > \alpha''>0$ such that 
\begin{align*}
    |\varrho_n(t)| &\le C e^{-\alpha''|t| + \alpha|a_n|}, \cr 
    |T(\varrho_n)(\xi)| &\le C e^{-\alpha'|\xi| + \alpha|a_n|}.
\end{align*}
Moreover, we have that $\text{supp}(\varrho_n) \subset \R_{\sgn(n)}$, where we use the convention that $\sgn(0) = -1$. Finally, note that $\varrho_n(a_j) = \delta_{n,j}$ whenever $\sgn(n) = \sgn(j)$, and that, since $T$ is an isometry between $H^1$ and $L^2$, we have that 
\[
\|\varrho_n'\|_2 = c \cdot \|T(\varrho_n)\|_2 \le C e^{\alpha|a_n|}.
\]
By running the same argument in the exact same fashion as we did above, we are also able to find a sequence of functions $\sigma_n$ with the property that 
\begin{align*}
    |\sigma_n(t)| &\le C e^{-\alpha''|t| + \alpha|b_n|}, \cr 
    |T(\sigma_n)(\xi)| &\le C e^{-\alpha'|\xi| + \alpha|b_n|},
\end{align*}
with $\text{supp}(\sigma_n) \subset \R_{\sgn(n)}$, $\sigma_n(b_j) = \delta_{n,j}$ whenever $\sgn(n) = \sgn(j)$, and $\|\sigma_n'\|_2 \le C e^{\alpha|b_n|}.$ 

\vspace{2mm}

\subsection{Iteration Scheme and Conclusion}\label{sec:iteration-funct} We are now ready to set up the main iteration process. Fix $\delta > 0$ sufficiently small (to be chosen later), and take $L > 0$ such that 
\[
\sum_{k \in A_L} e^{(\alpha - \alpha')|a_k|} + \sum_{j \in B_L} e^{(\alpha-\alpha')|b_j|} < \delta,
\]
where $A_L = \{ k \in \Z \colon |a_k| > L\}, \, B_L = \{ j \in \Z \colon |b_j| > L\}.$ We further divide these sets into 
\[
A_L^+ = A_L \cap (0,+\infty), \,A_L^{-} = A_L \setminus A_L^+,\, B_L^+ = B_L \cap (0,+\infty), \, B_L^- = B_L \setminus B_L^+. 
\]
With that in mind, we define the Banach spaces $\mathfrak{B}^{\pm}$ of pairs of sequences $\kappa = (\{s_k\}_{k \in A_L^{\pm}},\{r_j\}_{j \in B_L^{\mp}})$ such that the norm 
\begin{equation}\label{eq:finite-norm}
\|\kappa\|_{\mathfrak{B}^{\pm}} := \sum_{k \in A_L^{\pm}} |s_k| e^{\alpha|a_k|} + \sum_{j \in B_L^{\mp}} |r_j|e^{\alpha|b_j|} < +\infty. 
\end{equation}
We are then able to show the following: 
\begin{proposition}\label{prop:interpolation} For each sequence $\kappa \in \mathfrak{B}^{\pm}$, there exists a continuous function $f_{\pm}:\R \to \C$ with $f_{\pm} \in L^2 \cap H^1$ such that $\supp(f_{\pm}) \subset \R_{\pm}$ and 
\begin{align*}
    f_{\pm}(a_k) & = s_k, \, \forall k \in A_L^{\pm}, \cr 
    Tf_{\pm}(b_j) &= r_j, \, \forall j \in B_L^{\mp}. 
\end{align*}
\end{proposition}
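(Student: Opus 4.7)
The plan is to construct $f_\pm$ by a Neumann series / Banach fixed point argument built on top of the approximate interpolation basis $\{\varrho_k\}, \{T\sigma_j\}$ produced in the previous subsection. I focus on the $+$ case, the $-$ case being entirely analogous. Given $\kappa = (\{s_k\}_{k \in A_L^+}, \{r_j\}_{j \in B_L^-}) \in \mathfrak{B}^+$, the naive interpolation guess is
\[
I(\kappa)(x) := \sum_{k \in A_L^+} s_k\,\varrho_k(x) + \sum_{j \in B_L^-} r_j\,(T\sigma_j)(x).
\]
By Proposition \ref{prop:properties-T}(III), each summand is supported in $\R_+$; the pointwise bound $|\varrho_k(x)| \le Ce^{-\alpha''|x|+\alpha|a_k|}$ and its counterpart for $T\sigma_j$ yield uniform convergence to a continuous function, while the $L^2$ and derivative bounds $\|\varrho_k\|_2 + \|\varrho_k'\|_2 \lesssim e^{\alpha|a_k|}$ (obtained from Proposition \ref{prop:properties-T}(I) together with the same decay) show that $I$ is a bounded linear map $\mathfrak{B}^+ \to L^2(\R)\cap H^1(\R)$ with $\|I(\kappa)\|_{L^2\cap H^1} \lesssim \|\kappa\|_{\mathfrak{B}^+}$.

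Next, I introduce the sampling map $\mathcal{S}(f) := (\{f(a_k)\}_{k \in A_L^+}, \{Tf(b_j)\}_{j \in B_L^-})$. Using the diagonal identities $\varrho_k(a_{k'}) = \delta_{k,k'}$ and $\sigma_j(b_{j'}) = \delta_{j,j'}$ (valid for same-sign indices), together with the involution $T\circ T = \id$ from Proposition \ref{prop:properties-T}(II), a short computation gives $\mathcal{S}\circ I = \id + E$, where the cross-interaction operator on $\mathfrak{B}^+$ is
\[
E(\kappa) = \Bigl(\Bigl\{\sum_{j \in B_L^-} r_j\,(T\sigma_j)(a_k)\Bigr\}_{k \in A_L^+},\; \Bigl\{\sum_{k \in A_L^+} s_k\,(T\varrho_k)(b_j)\Bigr\}_{j \in B_L^-}\Bigr).
\]

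The key step will be to show $E$ is a strict contraction. Inserting $|(T\sigma_j)(a_k)| \le Ce^{-\alpha'|a_k|+\alpha|b_j|}$ and $|(T\varrho_k)(b_j)| \le Ce^{-\alpha'|b_j|+\alpha|a_k|}$ into the weighted norm \eqref{eq:finite-norm} and swapping the order of summation collapses the estimate to
\[
\|E(\kappa)\|_{\mathfrak{B}^+} \le C\Bigl(\sum_{k \in A_L^+} e^{(\alpha-\alpha')|a_k|} + \sum_{j \in B_L^-} e^{(\alpha-\alpha')|b_j|}\Bigr)\|\kappa\|_{\mathfrak{B}^+} \le C\delta\|\kappa\|_{\mathfrak{B}^+},
\]
the last inequality being exactly the defining property of $L$. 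Fixing $\delta$ small enough at the outset so that $C\delta < 1$ makes $\id + E$ invertible on the Banach space $\mathfrak{B}^+$ via a Neumann series; setting $\kappa' := (\id+E)^{-1}\kappa$ and $f_\pm := I(\kappa')$ then produces, essentially for free, a continuous function $f_\pm \in L^2\cap H^1$, supported in $\R_\pm$, satisfying $\mathcal{S}(f_\pm) = (\id + E)(\kappa') = \kappa$.

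The one genuinely delicate point is the contraction estimate for $E$, which rests crucially on the \emph{strict} decay gap $\alpha' > \alpha$ supplied by Lemma \ref{lemma:preliminary-2}: it is precisely this gap that absorbs the $e^{\alpha|a_k|}, e^{\alpha|b_j|}$ weights baked into \eqref{eq:finite-norm} against the Gaussian-type decay of the cross terms, reducing the whole matter to the tail-smallness condition that defines $L$. Everything else — uniform convergence, $L^2$-$H^1$ control and correct support — follows immediately from the decay and support properties of $\varrho_k, \sigma_j$ combined with the isometric and involutive properties of $T$ collected in Proposition \ref{prop:properties-T}.
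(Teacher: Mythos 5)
Your proposal is correct and is essentially the paper's own proof: your cross-interaction operator $E$ is exactly the operator $\mathcal{T}$ defined there, your contraction estimate is the same computation trading the gap $\alpha'>\alpha$ against the tail condition defining $L$, and your Neumann series $(\id+E)^{-1}\kappa=\sum_{n\ge0}(-1)^nE^n\kappa$ is just a cleaner packaging of the paper's iteration $f_n=I(\mathcal{T}^{(n)}\kappa)$. If anything, your alternating-sign bookkeeping is the more careful one: with the convention $\mathcal{T}\kappa=\mathcal{S}(f_0)-\kappa$ used in the paper, the telescoping sum one needs is $\sum_{n\ge0}(-1)^nf_n$, matching your $(\id+E)^{-1}$, rather than $-\sum_{n\ge0}f_n$ as written there.
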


\begin{proof} Suppose without loss of generality that we are dealing with $\mathfrak{B}^+$. We start by taking 
\begin{equation}\label{eq:first-interpol-seq}
f_0(t) = \sum_{k \in A_L^+} s_k \varrho_k(t) + \sum_{j \in B_L^-} r_j T(\sigma_j)(t). 
\end{equation}
Since $\supp(\sigma_j) \subset \R_-$ if $j<0$, we have that $\supp(f_0) \subset \R_+$. Moreover, it follows directly from the definition that we have the following pointwise bound on $f_0$ and $T(f_0)$:
\begin{equation}\label{eq:bound-f-Tf}
|T(f_0)(t)|+|f_0(t)| \le C \|\kappa\|_{\mathfrak{B}^+} e^{-\alpha''|t|}. 
\end{equation}
Furthermore, we may bound the $H^1$ norm of $f_0$ as follows: 
\begin{align}\label{eq:L^2-f_0}
\|f_0'\|_2 &\le \sum_{k \in A_L^+} |s_k| \|\varrho_k'\|_2 + \sum_{j \in B_L^{-}} |r_j|\|T(\sigma_j)'\|_2 \cr 
            &\le C \left(\sum_{k \in A_L^+} |s_k| e^{\alpha|a_k|} + \sum_{j \in B_L^-} |r_j| e^{\alpha|r_j|}\right) \le C\|\kappa\|_{\mathfrak{B}^+}. 
\end{align}
In a completely analogous manner, by arguing with $(Tf_0)'$ instead and using that $T(Tf_0) = f_0$, we are able to conclude that $\|f_0\|_2 \le C \|\kappa\|_{\mathfrak{B}^+}$ as well.

We now define a linear map $\mathcal{T}$ from $\mathfrak{B}^+$ to itself, by $\mathcal{T}\kappa = (\{(\mathcal{T}^1\kappa)_k\}_{k \in A_L^+}, \{(\mathcal{T}^2 \kappa)_j\}_{j \in B_L^{-}})$, where 
\begin{align*}
    (\mathcal{T}^1\kappa)_k &= f_0(a_k) - s_k, \cr 
    (\mathcal{T}^2\kappa)_j &= T(f_0)(b_j) - r_j. 
\end{align*}
Now, we simply notice that 
\[
\sum_{k \in A_L^+} |(\mathcal{T}^1\kappa)_k| e^{\alpha |a_k|} \le C \sum_{k \in A_L^+} \sum_{j \in B_L^{-}} |r_j| e^{(\alpha-\alpha')|a_k| + \alpha|b_j|}
\]
\[
\le C \|\kappa\|_{\mathfrak{B}^+} \sum_{ k \in A_L^+} e^{(\alpha-\alpha')|a_k|} < C \delta \|\kappa\|_{\mathfrak{B}^+}. 
\]
The same argument applied to $(\mathcal{T}^2\kappa)_j$ shows that 
\[
\|\mathcal{T} \kappa\|_{\mathfrak{B}^+} \le 2C\delta \|\kappa\|_{\mathfrak{B}^+}. 
\]
Take thus $\delta < \frac{1}{4C}$ above. We now define $f_1$ analogously as in \eqref{eq:first-interpol-seq}, but with $\mathcal{T}\kappa$ in place of $\kappa$, and in general we define $f_n$ recursively to be the function interpolating $\mathcal{T}^{(n)}\kappa$ as coefficients, as in \eqref{eq:first-interpol-seq}. Here, $\mathcal{T}^{(n)}$ denotes the $n-$th iterate of the map $\mathcal{T}$ defined above. We claim that 
\[
f_+ := -\sum_{n \ge 0} f_n
\]
satisfies the hypotheses of the proposition. Indeed, since we showed that $\mathcal{T}$ is a contraction in $\mathfrak{B}^+$, \eqref{eq:bound-f-Tf} implies that the function $f_+$, as well as $Tf_+$, is pointwise bounded by $e^{-\alpha''|t|}$. The telescoping nature of the definition of $\mathcal{T}$ then implies that $f_+(a_k) = s_k$ for each $k \in A_L^+$, as well as $Tf_+(b_j) = r_j,$ for all $j \in B_L^{-}$. Moreover, since $f_+$ is an uniform limit of continuous functions, it is itself continuous, and since $\supp(f_n) \subset \R_+$, the same holds for $f_+$. Finally, from \eqref{eq:L^2-f_0} we obtain that $f_+\in L^2 \cap H^1$ as desired, with the bound 
\[
\|f_+\|_2 + \|f_+'\|_2 \le C \|\kappa\|_{\mathfrak{B}^+}
\]
being additionally fulfilled. 
\end{proof}

\begin{proof}[\unskip\nopunct]  Recall that, in the beginning of the proof, we replaced $A$ and $B$ by $A'$ and $B'$, with $|A' \setminus A| = |B' \setminus B| = +\infty.$ It follows thus that the arguments above for such modified sets results, in the same way, in a solution to the respective interpolation problem. We let then $s_k,r_j$ be sequences such that $s_k = 0 = b_j$ if $k$ and $j$ are indices of an element of the original sets $A$ and $B$, respectively, and set them as arbitrary values in the complement $A' \setminus A, \, B' \setminus B$, with the only constraint that they do not violate \eqref{eq:finite-norm}. This shows that the space of functions which vanish on $A_{\pm} \setminus [-L,L]$ with their image under $T$ vanishing on $B_{\mp} \setminus[-L,L]$ is infinite-dimensional.

On the other hand, since there are only finitely many points of $A_{\pm}$ and $B_{\mp}$ inside $[-L,L]$, we conclude that there must be an infinite-dimensional space of functions $F_{\pm}$ such that 
\[    F_{\pm}(a_k) = 0, \, \forall k \text{ such that } \sgn(k) = \pm, 
\]
\[
    TF_{\pm}(b_j) =0, \, \forall j \text{ such that } \sgn(j) = \mp. 
\]
Since the support of $F_{\pm}$ is in $\R_{\pm}$, and the support of $TF_{\pm}$ is in $\R_{\mp}$, the vanishing statement \eqref{eq:HUP-cond-gen}  $\psi_{\pm}$ such that $\widehat{\psi_{\pm}} = F_{\pm}$. All that is left is to show that $\psi_{\pm}$ are integrable: this can be achieved by noticing that, by our construction, we have that 
\begin{equation}\label{eq:decay-special-functions} 
\|F_{\pm}\|_2 + \|F_{\pm}'\|_2 < +\infty,
\end{equation}
and hence, since $\psi_{\pm} = \widecheck{F_{\pm}}$, then \begin{align*}
\|\psi_{\pm}\|_{L^1(\R)} \le C \left( \int_{\R} |\psi_{\pm}(t)|^2 (1+t^2) \, dt \right)^{1/2} \le C'\left( \|F_{\pm}\|_2 + \|F_{\pm}'\|_2\right) < +\infty, 
\end{align*} 
as desired, concluding the proof of Theorem \ref{thm:negative-gen}. 
\end{proof}

\section{Proof of Theorems \ref{thm:main} and \ref{thm:improved}}\label{sec:counterex}

\subsection{Proof of Theorem \ref{thm:main}} We start with the `if' part. The proof below follows essentially the same arguments as in \cite{Hedenmalm-Monte-Klein-Gordon}; it has been brought to our attention that the authors of \cite{Giri-Rawat} have recently provided in \cite{Giri-Rawat-corrigendum} an analogous argument. We decided, however, to include it here for completeness. 
\begin{proof}[\unskip\nopunct] 
Indeed, suppose we have $\psi \in L^1(\R)$ such that
\begin{align}
\int_{\R} e^{-\pi i \tau (n+\theta)} \psi(\tau) \, d\tau &= 0, \,\, \forall \, n \in \Z, \label{eq:periodic}  \\ 
\int_{\R} e^{-\pi i (\beta n)/\tau} \psi(\tau) \, d \tau &= 0, \,\, 
\forall \, n \in \Z. \label{eq:inverse-periodic}  
\end{align} 
Let $M_{\theta}\psi(\tau) := e^{-\pi i \theta \tau} \psi(\tau).$ Equation \eqref{eq:periodic} is equivalent to $\sum_{j \in \Z} (M_{\theta}\psi)(2j+t) = 0$ for almost every $t \in \R.$ Analogously, changing variables $\tau \mapsto \beta/\tau$ in \eqref{eq:inverse-periodic}, we obtain that 
$$\sum_{j \in \Z} \frac{1}{(\tau+2j)^2} \psi\left( \frac{\beta}{\tau+2j}\right) = 0,$$
for almost all $t \in \R.$ Isolating the terms associated with $j = 0$ in both of these relations, we obtain 
\begin{align}
\psi(t) & = -\sum_{j \in \Z^*} e^{-2\pi i j \theta} \psi(2j + t),  \label{eq:periodic-2} \\ 
\frac{1}{t^2}\psi(\beta/t) & = -\sum_{j \in \Z^*} \frac{1}{(t+2j)^2} \psi\left( \frac{\beta}{t+2j}\right). \label{eq:inverse-periodic-2} 
\end{align}
Equivalently, \eqref{eq:inverse-periodic-2} rewrites as 
\begin{equation}\label{eq:inverse-periodic-3}
\psi(s) = - \sum_{j \in \Z^*} \frac{\beta^2}{(\beta + 2sj)^2} \psi \left( \frac{\beta s}{\beta + 2sj}\right). 
\end{equation}
Combining \eqref{eq:periodic-2} and \eqref{eq:inverse-periodic-3} yields
\begin{equation}\label{eq:functional-eq-combined} 
\psi(s) = \sum_{k,j \in \Z^*} e^{-2 \pi i j \theta} \frac{\beta^2}{(\beta+2(s+2k)j)^2} \psi \left( \frac{\beta(s+2k)}{\beta + 2(s+2k)j} \right).
\end{equation}
Define the operator $T_{\beta}:L^1[-1,1] \to L^1[-1,1]$ as 
\[
T_{\beta}f(t) = \sum_{j \in \Z^*} \frac{\beta}{(t + 2j)^2} f \left( \frac{\beta}{t + 2j} \right). 
\]
With this notation, \eqref{eq:functional-eq-combined} implies that 
\begin{equation}\label{eq:bound-psi}
|\psi(t)| \le (T_{\beta}^2 |\psi|)(t), \,\, \text{for almost all } t\in (-1,1). 
\end{equation}
From this point on, we have to distinguish between cases: first of all, if $\beta \in (0,1),$ it suffices to use Proposition 3.13.1 in \cite{Hedenmalm-Monte-Klein-Gordon}, which says that $T^{2l}_{\beta} |\psi| \to 0$ in $L^1[-1,1].$ On the other hand, if $\beta = 1,$ we only need to use \cite[Proposition~3.13.13]{Hedenmalm-Monte-Klein-Gordon}, which also guarantees to us that $\int_{-1+\delta}^{1-\delta} T^{2l}_1|\psi| \to 0$ as $l \to \infty,$ for any $\delta > 0.$ Thus, in either case, we obtain $\psi \equiv 0$ in $(-1,1).$ By \eqref{eq:inverse-periodic-3}, $\psi \equiv 0$ on $\R.$ \\

For the `only if' part, fix $\beta >1$. We simply take a $\psi_0 \in L^1, \psi_0 \not \equiv 0$, given by Theorem \ref{thm:negative-gen} such that 
\begin{align*}
\widehat{\psi_0}(\sqrt{\beta}(n+\theta)) &= 0, \, \forall \, n \in \Z, \cr
T\widehat{\psi_0}(\sqrt{\beta} n) &= 0, \, \forall \, n \in \Z. 
\end{align*}
By taking $\psi(t) = \psi_0(t/\sqrt{\beta}),$ this shows that $\Lambda_{\beta}^{\theta}$ does \emph{not} form a H.U.P. with the hyperbola, whenever $\beta > 1$, finishing the proof of that result. 
\end{proof} 

\subsection{Proof of Theorem \ref{thm:improved}}\label{sec:neg} Since Theorem \ref{thm:negative-gen} provides us, upon scaling, with the examples for the $\alpha \beta > 1$ case, we focus on the other two cases.

\begin{proof}[\unskip\nopunct]  

Applying a scaling if necessary we may assume $\alpha=\beta\le1$. Let $\psi$ be a function satisfying \eqref{eq:HUP-cond-gen} for such sequences, and define $\varphi(t) = t^{-2} \psi(1/t).$ We use the definition \eqref{eq:fourier-d-1-special} in what follows.

\vspace{2mm}

\noindent{\bf Step 1:} We first show that the conditions in Theorem \ref{thm:improved} imply that $\psi \in L^2((1+x^2)\,dx)$. Indeed, by the Poincar\'e-Wirtinger inequality \eqref{eq:p-w-set} applied to each interval $(b_k,b_{k+1})$, we have 
\[
\frac{1}{\pi^2} \int_{b_k}^{b_{k+1}} |\widehat{\varphi}'(t)|^2 \, dt \ge (b_{k+1}-b_k)^2 \frac{1}{\pi^2} \int_{b_k}^{b_{k+1}} |\widehat{\varphi}'(t)|^2 \, dt \ge \int_{b_k}^{b_{k+1}} |\widehat{\varphi}(t)|^2 \, dt.
\]
Summing the inequality above over $k \in [-N,N) \cap \Z$ yields 
\begin{align}\label{eq:poincare-prelim}
\frac{1}{\pi^2} \int_{\R} |\widehat{\varphi}'(t)|^2 \, dt \ge \frac{1}{\pi^2} \int_{b_{-N}}^{b_{N}} \left| \widehat{\varphi}'(t)\right|^2\, dt \ge \int_{b_{-N}}^{b_{N}} |\widehat{\varphi}(t)|^2 \, dt. 
\end{align}
Using Fatou's lemma on the right-hand side of \eqref{eq:poincare-prelim} then yields that $\widehat{\varphi} \in L^2$, which is equivalent after a change of variables to $x\cdot \psi(x) \in L^2,$ concluding this step. 

\vspace{2mm}

\noindent {\bf Step 2:} Suppose first $\alpha\beta < 1,$ and that $\psi \not\equiv 0$ satisfies the conditions of Theorem \ref{thm:improved}. After rescaling we may assume $\alpha=\beta<1$. Then condition \eqref{eq:HUP-cond-gen} translates to $\widehat{\psi}\left(a_{n}\right)=0=T\widehat{\psi}\left(b_{n}\right)$. We have proved in {\bf step 1} that $\widehat{\varphi} = T\widehat{\psi}$ and $\widehat{\psi}$ belong to $H^1(\R)$. Then by the Poincaré-Wirtinger inequality \eqref{eq:p-w-set}, we have
\begin{equation}\label{eq:P-W-chain} 
\int_{a_{n}}^{a_{n+1}}\left|\frac{\widehat{\psi}'(x)}{\pi}\right|^{2} d x \ge \left(a_{n+1}-a_{n}\right)^{2} \int_{a_{n}}^{a_{n+1}}\left|\frac{\widehat{\psi}'(x)}{\pi}\right|^{2} d x \geq \int_{a_{n}}^{a_{n+1}}|\widehat{\psi}(x)|^{2} d x,
\end{equation}
with equality if and only if $\widehat{\psi}=0$ on $\left(a_{n}, a_{n+1}\right)$. Summing up all these inequalities and using Plancherel twice, we obtain: 
\begin{align*}
\int_{\mathbb{R}} t^{2}|\psi(t)|^{2} d t=\int_{\mathbb{R}}\left|\frac{\widehat{\psi}^{\prime}(x)}{\pi}\right|^{2} d x& >\int_{\mathbb{R}}|\widehat{\psi}(x)|^{2} d x \cr 
=\int_{\mathbb{R}}|\psi(t)|^{2} d t&=\int_{\mathbb{R}} t^{-2}|\psi(1 / t)|^{2} d t=\int_{\mathbb{R}} t^{2}|\varphi(t)|^{2} d t .
\end{align*}
On the other hand, running the same argument as in \textbf{step 1}, we obtain that 
\[
\int_{\R} t^2 |\psi(t)|^2 \, dt < \int_{\R} t^2 |\varphi(t)|^2 \, dt,
\]
an obvious contradiction, which stems from the fact that we supposed that $\psi \not \equiv 0$, concluding the proof under the conditions of Theorem \ref{thm:improved}, since $\widehat{\psi}, T\widehat{\psi} \in H^1$ if, and only if, $\psi \in L^2((1+t^2)\,dt)$. 

\vspace{2mm}

\noindent{\bf Step 3:} Now, suppose we are in the $\alpha \beta = 1$ case. Through a dilation argument again, we may take $\alpha = \beta = 1$. By applying the proof above, we get that we must have equality in the Poincar\'e-Wirtinger inequality 
\begin{equation}\label{eq:Wirtinger-equality} 
(a_{n+1} - a_n)^2 \int_{a_n}^{a_{n+1}} \left| \frac{\widehat{\psi}'(x)}{\pi}\right|^2 \, dx \ge \int_{a_n}^{a_{n+1}} |\widehat{\psi}(x)|^2 dx,
\end{equation}
for each $n \in \Z.$ This promptly implies that there is $t_n \in \C$ such that $\widehat{\psi}(x) = t_n \cdot \sin\left( \pi \frac{x-a_n}{a_{n+1} - a_n}\right)$ for $x \in [a_n,a_{n+1}]$. 

Suppose thus that $\widehat{\psi} \not\equiv 0$. Then there is $n_0 \in \Z$ such that $t_{n_0} \neq 0$. We now focus on the next endpoint $a_{n_0 + 1}.$ Since $\psi \in \mathcal{H}_2,$ we have that $\widehat{\psi} \in C^1(\R),$ and hence $\widehat{\psi}
'(a_{n_0+1})$ is well-defined. But, since $\widehat{\psi}(x) = t_{n_0+1} \sin\left( \pi \frac{x-a_{n_0+1}}{a_{n_0 +2}-a_{n_0+1}}\right)$ for $x \in [a_{n_0+1},a_{n_0 +2}],$ by comparing the right and left limits 
\[
\lim_{r\to 0} \frac{\widehat{\psi}(a_{n_0+1}+r)-\widehat{\psi}(a_{n_0+1})}{r} = \lim_{r \to 0} \frac{\widehat{\psi}(a_{n_0+1}-r) - \widehat{\psi}(a_{n_0+1})}{-r} = \widehat{\psi}'(a_{n_0+1}), 
\]
we obtain that 
\[
-\frac{t_{n_0}}{a_{n_0+1}-a_{n_0}} = \frac{t_{n_0+1}}{a_{n_0+2}-a_{n_0+1}}.  
\]
This shows that $t_{n_0+1} \neq 0$. Since the same argument may be used for the endpoint $a_{n_0}$, we have $t_{n_0-1} \neq 0$, and hence $t_n \neq 0$ for all $n \in \Z$ by induction. Hence, $\widehat{\psi} \not\equiv 0$ on each of the intervals $[a_n,a_{n+1}]$. Looking back to the proof, we see that the inequality 
\[
 \int_{a_n}^{a_{n+1}} \left| \frac{\widehat{\psi}'(x)}{\pi}\right|^2 \, dx \ge (a_{n+1} - a_n)^2 \int_{a_n}^{a_{n+1}} \left| \frac{\widehat{\psi}'(x)}{\pi}\right|^2 \, dx
\]
must actually become an \emph{equality} for each $n \in \Z$, and hence $a_{n+1} - a_n = 1$ whenever $\widehat{\psi}' \not\equiv 0$ in $(a_n,a_{n+1})$. Repeating the endpoint derivative argument we used above for each $n \in \Z$ implies readily that 
\[
-t_n = t_{n+1}, \, \forall \, n \in \Z,
\]
which, on the other hand, shows that $\widehat{\psi}(x) = c \cdot \sin(\pi(x-\theta))$ for all $x \in \R$, for some $c \in \C \setminus\{0\}$ and some $\theta \in \R$. Since this last function is \emph{not} a Fourier transform of an $L^1$ function by Riemann-Lebesgue, we arrive at a contradiction, which stems from supposing that $\psi \not\equiv 0$, as desired. 
\end{proof}

\begin{remark} It is important to note that same endpoint results may be obtained without making use of continuity of $\widehat{\psi}'$, only using square-summability of $\psi$ and a slightly different \emph{regularity} assumption on $\psi$. Moreover, in the regular case where $A$ is a translated copy of $\Z$ and $B = \Z$, we may obtain the same result once more solely under the assumption that $\psi \in \mathcal{H}_0.$ We refer the reader to Section \ref{sec:alt-psi-proof} for a more detailed discussion on that topic. 
\end{remark}

\section{Applications}\label{sec:applications}

\subsection{An asymptotic version of Theorem \ref{thm:improved}}\label{sec:KNS}

As promised in the introduction, the main goal of this section will be to prove an asymptotic version of Theorem \ref{thm:improved}. In order to do so, we define the following auxiliary class of functions: 
\begin{equation}\label{eq:class-1/2}
\mathcal{C}_{1/2} =\left\{ \psi \in \mathcal{H}_0 \colon \widehat{\psi} \in L^2(dt/|t|)\right\}. 
\end{equation} 
The notation employed for this space is inspired by Proposition \ref{prop:embedding}, which shows that the $L^2(dt/|t|)$-norm is bounded by an interpolation of $L^2$ and $H^1$ norms. Our theorem then reads as follows: 

\begin{theorem}\label{thm:improved-asympt} Let $A = \{a_n\}_n$ and $B = \{b_n\}_n$ be two separated sequences. Then the following assertions hold: 
\begin{enumerate}[\normalfont(I)]
    \item Suppose $0 \in A \cup B$. If we let 
    \[
    \limsup_{n \to \pm \infty} |a_{n+1} - a_n| = \alpha, \, \limsup_{n \to \pm \infty} |b_{n+1} - b_n| = \beta, 
    \]
    then $(\Gamma,\Lambda_{\bf A,B})$ is a $\mathcal{C}_{1/2}$-\emph{H.U.P.} if $\alpha \beta < 1$. 
    
    \item If we let 
    \[
    \liminf_{n \to \pm \infty} |a_{n+1} - a_n| = \alpha, \, \liminf_{n \to \pm \infty} |b_{n+1} - b_n| = \beta, 
    \]
    then $(\Gamma,\Lambda_{\bf A,B})$ is \emph{not} a $\mathcal{C}_{1/2}$-\emph{H.U.P.} if $\alpha \beta > 1$. 
\end{enumerate}
\end{theorem}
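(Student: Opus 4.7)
The plan splits naturally along the two parts. \textbf{Part (II)} follows at once from Theorem \ref{thm:negative-gen} by a scaling: pick $\lambda>0$ with $\lambda\alpha>1$ and $\lambda^{-1}\beta>1$, apply Theorem \ref{thm:negative-gen} to $\lambda A$ and $\lambda^{-1}B$, and undo the scaling. The resulting $\psi$ belongs to $\mathcal{C}_{1/2}$ since the construction in Section \ref{sec:proof-neg} produces $\widehat{\psi}\in L^2\cap H^1$ vanishing at the origin (cf.~\eqref{eq:decay-special-functions} and the fact that the building blocks $F_{\pm}$ are supported on a single half-line), whence Proposition \ref{prop:embedding} yields $\widehat{\psi}\in L^2(dt/|t|)$.

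\textbf{Part (I)} is the substantive direction. I would follow the blueprint of Theorem \ref{thm:improved}, the key new ingredient being that the quantity $N:=\|\theta\|_{L^2(dt/|t|)}$ is $T$-invariant by Proposition \ref{prop:properties-T}(I) and hence serves as a symmetric remainder-tracking norm. Setting $\theta=\widehat{\psi}$ and assuming $0\in A$ without loss of generality, I would first adapt Step 1 of the proof of Theorem \ref{thm:improved}, summing Poincar\'e--Wirtinger only over the cofinite family of intervals whose gap is eventually $\le\beta+\epsilon$, to secure that $\theta$ and $T\theta$ both lie in $H^1$. Next, fixing $\epsilon>0$ with $(\alpha+\epsilon)(\beta+\epsilon)<1$, pick $R$ large enough that every interval of gap exceeding $\alpha+\epsilon$ (respectively $\beta+\epsilon$) sits inside $[-R,R]$. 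Combining Poincar\'e--Wirtinger on the good intervals with the elementary estimate $\int_I |\theta|^2\,dt\le R\int_I |\theta|^2/|t|\,dt$ on the bad ones (and the analogous bound for $T\theta$), together with the isometries of Proposition \ref{prop:properties-T}, should yield paired inequalities of the form
\begin{equation*}
\|\theta\|_2^2 \lesssim (\alpha+\epsilon)^2\|T\theta\|_2^2 + N^2,\qquad \|T\theta\|_2^2 \lesssim (\beta+\epsilon)^2\|\theta\|_2^2 + N^2,
\end{equation*}
chaining into $\|\theta\|_2,\|T\theta\|_2\lesssim N$.

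The hardest step is closing the argument to deduce $N=0$ (and thereby $\theta\equiv 0$). The plan here is to feed the bounds just obtained into the interpolation inequality $N\lesssim \|\theta\|_2^{1/2}\|\theta'\|_2^{1/2}$ of Proposition \ref{prop:embedding}, producing a self-improving estimate of shape $N\lesssim N$. A single application need not yield a constant strictly below $1$; closure will likely require an iterative bootstrap along the lines of \cite{Kulikov-Nazarov-Sodin}, in which at each round the refined control on $\|\theta\|_2$ and $\|T\theta\|_2$ improves the Morrey-type pointwise bound on $\theta$ throughout the bad region, shrinking the effective remainder geometrically. Making this scheme quantitative, and in particular verifying that the constants contract rather than merely stabilize, is the technical heart of the positive direction.
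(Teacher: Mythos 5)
Your treatment of Part (II) is correct and is exactly what the paper does: a dilation plus Theorem \ref{thm:negative-gen}, with membership of the counterexample in $\mathcal{C}_{1/2}$ following from $F_{\pm}\in L^2\cap H^1$, $F_{\pm}(0)=0$, and Proposition \ref{prop:embedding}.

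For Part (I), however, there is a genuine gap at precisely the point you flag, and it is not a repairable technicality. Your scheme produces $\|\theta\|_2\lesssim N$, $\|\theta'\|_2\lesssim N$, and then $N\lesssim\|\theta\|_2^{1/2}\|\theta'\|_2^{1/2}\lesssim N$, with a constant that has no reason to be below $1$: it involves the constant of Proposition \ref{prop:embedding}, the factor $R$ absorbed from the finitely many bad intervals, and $\bigl(1-(\alpha+\epsilon)^2(\beta+\epsilon)^2\bigr)^{-1}$. There is also no small parameter on which to iterate: $N=\|\theta\|_{L^2(dt/|t|)}$, $\|\theta\|_2$, and $\|T\theta\|_2$ are fixed numbers attached to one fixed $\theta$, so running the estimate again reproduces the same inequality with the same constant --- nothing contracts. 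A purely $L^2$/Poincar\'e--Wirtinger argument of this shape cannot distinguish a $\limsup$-density hypothesis from a $\sup$-density hypothesis, which is why the paper argues differently. It lifts $\theta$ to the radial functions $\Theta_{\pm}(u)=\theta(\pm|u|^2)/|u|^2$ on $\R^4$, notes that the vanishing conditions become vanishing of $\Theta_{\pm}$ and $\mathcal{F}_4(\Theta_{\pm})$ on centered spheres of radii $\sqrt{|a_n|}$, $\sqrt{|b_n|}$ whose $2$-densities are controlled by $\alpha/2$ and $\beta/2$, and then invokes a full higher-dimensional analogue of the Kulikov--Nazarov--Sodin uniqueness theorem (Theorem \ref{thm:KNS-higher-dim}). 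That theorem is where the real work happens: one bootstraps the annular Poincar\'e inequality against an increasing family of convex weights to place $\Theta_{\pm}$ in a Gelfand--Shilov class (Proposition \ref{prop:higher-d}), then extends $\Theta_{\pm}$ and its Fourier transform to entire functions of order $2$ and derives a contradiction between their Phragm\'en--Lindel\"of indicators. To complete your Part (I) you would need to reproduce or cite that machinery; the interpolation inequality alone will not close the argument.
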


Part (II) of the result above is a direct consequence of Theorem \ref{thm:negative-gen}, after a suitable dilation. Hence, we focus on proving Part (I) in what follows.

\subsubsection{A higher-dimensional Fourier uniqueness result} In order to prove Theorem \ref{thm:improved-asympt}, we need to use the techniques developed in \cite{Kulikov-Nazarov-Sodin} in the context of Fourier uniqueness pairs. In particular, we will need a higher-dimensional version of their main Fourier uniqueness result. 

Here, it is important to note in which sense we define vanishing: indeed, for $d=1$, all functions in $H^1 \cap L^2$ are automatically \emph{continuous}, and hence pointwise evaluation is well-defined. On the other hand, in higher dimensions this is not the case; for that reason we will say that a function $f \in L^2(\R^d) \cap H^1(\R^d)$ \emph{vanishes on a sphere of radius $r>0$} if we have that 
\[
\mathbb{T}_r(f) = 0,
\]
where $\mathbb{T}_r : W^{1,2}(B_r(0)) \to L^2(r\mathbb{S})$ denotes the Sobolev-trace operator for the ball of center 0 and radius $r>0$. 

\begin{theorem}\label{thm:KNS-higher-dim} Let $f:\R^d \to \R$ be a \emph{radial} function with $f \in L^2(\R^d) \cap H^1(\R^d)$ vanishing on a set of centered spheres of radii $\{\lambda_i\}_i,$ with $\Fd(f)$ being continuous and vanishing on centered spheres with radii $\{\gamma_i\}_i,$ where the aforementioned sequences satisfy
\begin{align*}
\limsup_{i\to \infty} \lambda_i^{p-1} |\lambda_{i+1} - \lambda_i| & < \alpha, \cr 
\limsup_{i\to \infty} \gamma_i^{q-1} |\gamma_{i+1} - \gamma_i| & < \beta,
\end{align*}
with $1/p + 1/q = 1, \, p \ge 2$ and $\alpha^{1/p} \beta^{1/q} < 1/2.$ Then $f \equiv 0.$ 
\end{theorem}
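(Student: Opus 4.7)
My approach is to reduce the radial Fourier-uniqueness problem to a pair of weighted Poincaré--Wirtinger inequalities in the radial variable, and then combine them through moment interpolation and an uncertainty-type inequality. First, passing to radial coordinates, write $f(x) = F(|x|)$ and let $\tilde F$ be the radial profile of $\mathcal F_d(f)$ (continuous by hypothesis); the assumptions translate to $F, F' \in L^2(r^{d-1}\,dr)$ and $F(\lambda_i) = 0 = \tilde F(\gamma_i)$. On each interval $[\lambda_i, \lambda_{i+1}]$ I would apply the weighted Poincaré--Wirtinger inequality with weight $r^{d-1}$. The gap hypothesis $\lambda_i^{p-1}|\lambda_{i+1}-\lambda_i| < \alpha$ forces $|\lambda_{i+1}-\lambda_i| = o(\lambda_i)$, so $\sup r^{d-1}/\inf r^{d-1}$ on the interval is $1+o(1)$, and the sharp PW constant is asymptotic to $(\lambda_{i+1}-\lambda_i)^2/\pi^2$. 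Multiplying by $\lambda_i^{2(p-1)} \approx r^{2(p-1)}$ and summing the asymptotic terms, while absorbing the finitely many near-origin intervals as a bounded correction, gives, after translation to $\R^d$ and Plancherel $\|\nabla f\|_2^2 = 4\pi^2 \||\xi|\hat f\|_2^2$,
\[
\int_{\R^d}|f|^2 |x|^{2(p-1)}\,dx \;\le\; 4\alpha^2(1+\varepsilon)\int_{\R^d}|\xi|^2|\hat f|^2\,d\xi + C_\varepsilon,
\]
and, by the same argument applied to $\tilde F$,
\[
\int_{\R^d}|\hat f|^2 |\xi|^{2(q-1)}\,d\xi \;\le\; 4\beta^2(1+\varepsilon)\int_{\R^d}|x|^2|f|^2\,dx + C_\varepsilon.
\]

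Second, setting $I_s := \bigl\|\,|x|^s f\,\bigr\|_2^2$ and $J_s := \bigl\|\,|\xi|^s \hat f\,\bigr\|_2^2$ (so that $I_0 = J_0$ by Plancherel), log-convexity of $s \mapsto \log I_s$ on $[0,p-1]$, available since $p \ge 2$, yields the interpolation $I_1 \le I_0^{2-q} I_{p-1}^{q-1}$; combined with the space-side estimate and normalising by $I_0$, this rewrites as $(I_1/I_0)^{p-1} \le 4\alpha^2 (J_1/J_0)$. Next I would invoke the sharp Heisenberg--Pauli--Weyl uncertainty in $\R^d$,
\[
I_{p-1}^{q-1} J_{q-1}^{p-1} \;\ge\; c_d\, I_0^{p+q-2},
\]
and substitute the two Poincaré--Wirtinger bounds on the left, as well as the moment interpolation on $I_1^{p-1}$. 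Using the duality $(p-1)(q-1) = 1$, all of the weighted $L^2$-norms cancel and the estimate collapses to $1 \le \bigl(2\alpha^{1/p}\beta^{1/q}\bigr)^{2(p+q-2)}\cdot K_d$ for a sharp dimension-dependent factor $K_d$. The hypothesis $\alpha^{1/p}\beta^{1/q} < 1/2$ then forces $\|f\|_2 = 0$.

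The main obstacle is the asymmetric role of $p$ and $q$ in this closing step. The natural dual of $(I_1/I_0)^{p-1} \le 4\alpha^2(J_1/J_0)$ would be $(J_1/J_0)^{q-1} \le 4\beta^2(I_1/I_0)$, which would close the loop elementarily and yield the threshold $\alpha^{1/p}\beta^{1/q} < 1/2$ directly; however, for $p > 2$ one has $q-1 < 1$, so $J_1$ lies outside the range $[0,q-1]$ of available moments of $\hat f$, and log-convexity of $s \mapsto \log J_s$ gives only an upper bound on $J_{q-1}$ in terms of $J_0$ and $J_1$, not the lower bound needed. The sharp HPW inequality is precisely the device that compensates for this asymmetry, and the numerical factor $1/2$ in the hypothesis reflects its sharp constant with exponent pair $(p-1,q-1)$. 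A secondary technical issue is absorbing the additive constants $C_\varepsilon$ arising from the finitely many non-asymptotic intervals; this I would handle by a rescaling-and-iteration argument in the style of \cite{Kulikov-Nazarov-Sodin}, pushing the $C_\varepsilon$ to zero after absorbing the near-origin contribution into the zero structure via a dilation.
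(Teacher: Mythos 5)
Your first step — the weighted Poincar\'e--Wirtinger inequality on each annulus $\{\lambda_i<|x|<\lambda_{i+1}\}$, with the weight ratio $(\lambda_{i+1}/\lambda_i)^{d-1}=1+o(1)$ controlled by the gap hypothesis — is sound and is essentially the paper's Lemma \ref{lemma:poincare} together with Claims \ref{claim:optimal-radial} and \ref{claim:comparison-radial}. You also correctly diagnose the obstruction for $p>2$: the two Poincar\'e inequalities produce the moments $I_{p-1}$ and $J_{q-1}$ on the ``output'' side but only $J_1$ and $I_1$ on the ``input'' side, and log-convexity runs in the wrong direction to close the loop. However, the proposed repair is where the argument breaks, in two independent ways.

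First, the closing step requires the sharp constant $K_d$ in your Heisenberg--Pauli--Weyl inequality $I_{p-1}^{q-1}J_{q-1}^{p-1}\ge c_d\,I_0^{p+q-2}$ to align exactly with the threshold $1/2$, i.e.\ you need $K_d\le 1$ after all norms cancel. This is not established and there is no reason for it to hold: the extremizers of HPW-type inequalities are generalized Gaussians with \emph{no real zeros}, whereas the constant $1/2$ in the theorem is the critical \emph{zero-spacing} density of entire functions of order $p$ (the spacing of zeros of $\sin$ and of Bessel functions). These two constants come from unrelated variational problems, and a one-shot uncertainty argument cannot detect the critical density. Second, the additive errors $C_\varepsilon$ are not removable by rescaling: the hypothesis is only a $\limsup$ as $i\to\infty$, so near the origin there may be no zeros at all, and the resulting contribution $\int_{|x|\le R_0}|f|^2|x|^{2(p-1)}\,dx$ is a fixed multiple of $\|f\|_2^2$ that a dilation cannot shrink (dilation also rescales $\alpha$ and $\beta$). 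An inequality of the form $I_{p-1}\le c\,I_{p-1}+C$ with $c<1$ and $C>0$ yields only finiteness of $I_{p-1}$, never $f\equiv0$. This is precisely why the paper's proof (following Kulikov--Nazarov--Sodin) is a two-stage argument rather than a single closed loop: Proposition \ref{prop:higher-d} iterates the Poincar\'e estimate with weights $\Phi(t)=t^\theta$, $\theta\to\infty$, to bootstrap $f$ and $\mathcal{F}_d(f)$ into the Gelfand--Shilov class \eqref{eq:f-fhat-gelfand-shilov}; only then can $f$ and $\mathcal{F}_d(f)$ be extended to entire functions of orders $p$ and $q$, and the contradiction $K_1>K_2>K_1$ between their Phragm\'en--Lindel\"of indicators (Claims \ref{claim:lower-bound-indicators} and \ref{claim:ineq-indicators}) is the step where the hypothesis $\alpha^{1/p}\beta^{1/q}<1/2$ is actually consumed. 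Your proposal is missing both the bootstrap and the complex-analytic endgame, and neither can be replaced by the HPW inequality.
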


Since the $d=1$ case has been treated in \cite{Kulikov-Nazarov-Sodin}, we suppose that $d \ge 2$ throughout the proof. We begin our discussion of the proof of this result with the following simple lemma: 

\begin{lemma}\label{lemma:poincare} Let $t>0$ and let $\varepsilon \ge 1 - \left(1+\frac{1}{2t}\right)^{-\frac{d-1}{2}}.$ Let $f:\R^d \to \R$ be a function such that $f \in L^2(\R^d) \cap H^1(\R^d)$, supported outside of a centered ball of radius $1,$ which vanishes on a set of (centered) spheres with $(1-\varepsilon)(2t)^{-1}$-dense set of radii. Then, for all convex increasing $C^1$-functions $\Phi:\R_+\to\R_+,$ we have 
\[
\Phi(t^2) \int_{\R^d} |f(x)|^2 \, dx \le \int_{\R^d} \Phi(|\xi|^2) |\Fd(f)(\xi)|^2 \, d\xi.
\]
\end{lemma}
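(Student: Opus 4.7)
The plan is to first establish the inequality in the model case $\Phi(s)=s$ via a radial-slicing argument and the classical Poincar\'e-Wirtinger inequality, and then to upgrade to arbitrary convex increasing $\Phi$ by a tangent-line comparison.

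For the model case, radiality allows one to write $f(x)=g(|x|)$, where $g$ vanishes on $[0,1]$ (by the support assumption) and at each $\lambda_i$ (by the spherical vanishing, understood in the trace sense). Ordering the zeros in $[1,\infty)$ as $1=\lambda_0<\lambda_1<\lambda_2<\dots$, the density hypothesis gives gaps $\delta_i:=\lambda_{i+1}-\lambda_i\le(1-\varepsilon)/(2t)$. Applying the classical Poincar\'e-Wirtinger inequality to $g$ on each $(\lambda_i,\lambda_{i+1})$ and then using the pointwise bounds $\lambda_i^{d-1}\le r^{d-1}\le\lambda_{i+1}^{d-1}$ yields
\begin{equation*}
\int_{\lambda_i}^{\lambda_{i+1}}|g|^2\, r^{d-1}\,dr\le \Bigl(\frac{\lambda_{i+1}}{\lambda_i}\Bigr)^{d-1}\Bigl(\frac{\delta_i}{\pi}\Bigr)^2\int_{\lambda_i}^{\lambda_{i+1}}|g'|^2\, r^{d-1}\,dr.
\end{equation*}
The key algebraic observation is that the hypothesis on $\varepsilon$ is tuned precisely so that $(\lambda_{i+1}/\lambda_i)^{d-1}\delta_i^2\le 1/(4t^2)$: indeed, $\lambda_i\ge 1$ and $\delta_i\le 1/(2t)$ force $\lambda_{i+1}/\lambda_i\le 1+1/(2t)$, while the assumption rearranges to $(1-\varepsilon)^2\le(1+1/(2t))^{-(d-1)}$, and the product of these two estimates delivers the desired factor. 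Summing over $i$, multiplying by $|\mathbb{S}^{d-1}|$, and using the radial identity $|\nabla f|=|g'|$ together with Plancherel applied to the gradient produces the model inequality $t^2\|f\|_2^2\le \int_{\R^d}|\xi|^2|\Fd(f)(\xi)|^2\,d\xi$.

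To pass from this to arbitrary convex increasing $\Phi$, I would invoke the tangent-line inequality $\Phi(s)\ge\Phi(t^2)+\Phi'(t^2)(s-t^2)$ with $s=|\xi|^2$. Integrating against $|\Fd(f)|^2$ and applying Plancherel gives
\begin{equation*}
\int_{\R^d}\Phi(|\xi|^2)|\Fd(f)|^2\,d\xi\ge\Phi(t^2)\|f\|_2^2+\Phi'(t^2)\Bigl(\int_{\R^d}|\xi|^2|\Fd(f)|^2\,d\xi-t^2\|f\|_2^2\Bigr).
\end{equation*}
Since $\Phi$ is increasing one has $\Phi'(t^2)\ge 0$, and by the model case the bracketed term is nonnegative, which closes the argument. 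The only subtle point I anticipate is the algebraic calibration of the weight conversion from $dr$ to $r^{d-1}\,dr$: the hypothesis on $\varepsilon$ is exactly what is required to absorb the Jacobian factor $(\lambda_{i+1}/\lambda_i)^{d-1}$, and this tight matching is where the specific exponent $(d-1)/2$ in the bound for $\varepsilon$ becomes essential.
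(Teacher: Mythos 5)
Your reduction to the model case $\Phi(s)=s$ via a tangent-line comparison is exactly the paper's route (it is the ``last step in the proof of Lemma~3'' of Kulikov--Nazarov--Sodin that the paper invokes), and your calibration of the constant --- $\delta_i\le(1-\varepsilon)/(2t)$, $\lambda_{i+1}/\lambda_i\le 1+1/(2t)$, and $(1-\varepsilon)^2(1+1/(2t))^{d-1}\le 1$ --- is correct. The problem is in your very first line of the model case: you write ``radiality allows one to write $f(x)=g(|x|)$,'' but the lemma does \emph{not} assume $f$ is radial, and it cannot, because in the paper it is applied to $f\cdot F_{0,v}$, the product of a radial function with a \emph{translated} bump, which is not radial. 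As written, your argument proves the lemma only for radial $f$, which is insufficient for its intended use; this is a genuine gap, not a cosmetic one.

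The paper closes this gap differently: it shows (via uniqueness and rotation-invariance of the first Dirichlet eigenfunction) that the sharp Poincar\'e constant $\mathbf{C}(a,b)$ of the annulus $A(a,b)$ with vanishing trace on both boundary spheres is attained at a \emph{radial} function, and then compares that radial minimizer with the one-dimensional constant to get $\mathbf{C}(a,b)\le(b/a)^{(d-1)/2}\mathfrak{C}(a,b)$; this transfers your one-dimensional estimate to arbitrary $f\in W^{1,2}$ on the annulus. Your argument can be repaired more cheaply without eigenfunctions: write a general $f$ in polar coordinates, apply the one-dimensional Poincar\'e--Wirtinger inequality to $r\mapsto f(r\omega)$ on each $(\lambda_i,\lambda_{i+1})$ for (almost every) fixed $\omega\in\mathbb{S}^{d-1}$, use $|\partial_r f|\le|\nabla f|$, and integrate in $\omega$; the Jacobian bookkeeping is identical to what you wrote. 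Either way, some statement covering non-radial $f$ is needed, together with a word on why the Sobolev trace condition on the spheres gives vanishing endpoint values for a.e.\ radial slice.
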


\begin{proof} The proof of this result is similar to that of \cite[Lemma~3]{Kulikov-Nazarov-Sodin}. We begin with the following: 

\begin{claim}\label{claim:optimal-radial} Let $\mathbf{C}(a,b)$ be the least of all $C>0$ such that, for the annulus $A(a,b) = \{x \in \R^d \colon a < |x| < b \},$ we have 
\begin{equation}\label{eq:Poincare}
\|f\|_{L^2(A(a,b))} \le C \|\nabla f\|_{L^2(A(a,b))},
\end{equation}
for each $f \in W^{1,2}(A(a,b))$ with $f|_{\partial A(a,b)} = 0$, in the sense of Sobolev traces. Then there exists a function $u_0$ such that 
\begin{equation}\label{eq:optimal-fct-Poincare}
\|u_0\|_{L^2(A(a,b))} = \mathbf{C}(a,b) \|\nabla u_0\|_{L^2(A(a,b))}.
\end{equation}
Furthermore, we have that any such function $u_0$ is \emph{radially symmetric}. 
\end{claim}

\begin{proof}[Proof of the Claim \ref{eq:Poincare}] The existence of such a function follows directly from the Rellich-Kondrachov theorem. For the radiality, note first that any function satisfying \eqref{eq:optimal-fct-Poincare} must satisfy the following Dirichlet-Laplace eigenvalue problem:
\begin{equation}\label{eq:Laplace} 
\begin{cases}
-\Delta u_0 = \lambda_1 u_0 & \text{ in } A(a,b), \cr 
u_0 = 0 & \text{ on } \partial A(a,b).
\end{cases}
\end{equation} 
Suppose now that $u_0$ is not radial, and consider, for a fixed rotation $R \in \text{SO}(d),$ the function 
\[
u_0^R(x) = u_0(Rx).
\]
This new function is, by direct computation, still a solution of \eqref{eq:Laplace}. Since the first eigenfunction is unique, and since, by polar coordinates, 
\[
\int_{A(a,b)} u_0 = \int_{A(a,b)} u_0^R,
\]
it follows by uniqueness of the first eigenfunction that $u_0^R = u_0$. But this is the same as saying that any function $u_0$ satisfying \eqref{eq:optimal-fct-Poincare} must be \emph{radial}, concluding the proof of the claim.
\end{proof}

Our next claim deals with a relationship between Poincar\'e constants on an annulus and the real line interval associated with it: 

\begin{claim}\label{claim:comparison-radial} Let $\mathbf{C}(a,b)$ be as in Claim \ref{claim:optimal-radial}. Let also $\mathfrak{C}(a,b)$ denote the best constant in the inequality 
\begin{equation}
\|f\|_{L^2((a,b))} \le C \|f'\|_{L^2((a,b))}.
\end{equation}
Then we have 
$$\mathbf{C}(a,b) \le \left( \frac{b}{a}\right)^{\frac{d-1}{2}} \mathfrak{C}(a,b).$$
\end{claim}

\begin{proof}[Proof of Claim \ref{claim:comparison-radial}] By Claim \ref{claim:optimal-radial}, we have that \eqref{eq:optimal-fct-Poincare} holds for $u_0.$ On the other hand, since $u_0$ is radial, we identify it with its restriction to the positive real line. Then, writing $\omega_d = \frac{\pi^{d/2}}{\Gamma(d/2+1)}$ to denote the volume of the unit ball in $\R^d$,
\[
\|u_0\|_{L^2(A(a,b))}^2 = d\cdot \omega_d \cdot \int_{a}^b |u_0(r)|^2 r^{d-1} \, dr, \,\,\,\, \|\nabla u_0\|_{L^2(A(a,b))}^2 = d \cdot \omega_d \cdot \int_a^b |u_0'(r)|^2 r^{d-1} \, dr. 
\]
Since $a^{d-1} \le r^{d-1} \le b^{d-1}$ holds in this case, then we have 
\begin{align*} 
\|u_0\|_{L^2(A(a,b))}^2 &\le d\cdot \omega_d b^{d-1} \|u_0\|_{L^2(a,b)}^2 \cr 
&\le d\cdot \omega_d b^{d-1} \mathfrak{C}(a,b)^2 \|u_0'\|_{L^2((a,b))}^2 \le (b/a)^{d-1}\mathfrak{C}(a,b)^2 \|\nabla u_0\|_{L^2(A(a,b))}^2. 
\end{align*} 
By the definition of $\mathbf{C}(a,b),$ we conclude the desired assertion.
\end{proof}

We now conclude in a similar way as in \cite[Lemma~3]{Kulikov-Nazarov-Sodin}: indeed, let $f$ be as in the statement of Lemma~\ref{lemma:poincare}. By the Poincar\'e inequality for annuli $A(r_i,r_{i+1}),$ together with Claim \ref{claim:comparison-radial}, we obtain 
\[
\int_{\R^d} |f(x)|^2 \, dx \le \left( \frac{r_{i+1}}{r_i} \right)^{d-1} \cdot \frac{(1-\varepsilon)^2}{(2\pi t)^2} \int_{\R^d} |\nabla f(x)|^2 \, dx. 
\]
By Plancherel, we obtain
\[
\int_{\R^d} t^2 |f(x)|^2 \, dx \le (1-\varepsilon)^2 \left( 1 + \frac{1}{2t}\right)^{d-1} \int_{\R^d} |\xi|^2 |\Fd(f)(\xi)|^2 \, d\xi,
\]
since $r_i > 1$ and $r_{i+1} - r_i \le (2t)^{-1}.$  By the condition on $\varepsilon,$ we prove the desired inequality with $\Phi(t) = t.$ In order to prove for general $\Phi,$ one repeats the last step in the proof of Lemma 3 in \cite{Kulikov-Nazarov-Sodin} verbatim. This finishes the proof.
\end{proof}

The next ingredient is a version of the main result in \cite[Section~5.2]{Kulikov-Nazarov-Sodin}. More specifically, we will show the following: 

\begin{proposition}\label{prop:higher-d} Let $f:\R^d \to \R$ be a function with $f \in L^2(\R^d) \cap H^1(\R^d)$ such that $f$ vanishes on centered spheres with radii $\{\lambda_i\}_i,$ and $\Fd(f)$ vanishes on centered spheres with radii $\{\gamma_i\}_i,$ where $\{\lambda_i\}_i, \{\gamma_i\}_i$ are as in the statement of Theorem \ref{thm:KNS-higher-dim}. Then we have that $f$ is a Schwartz function. More specifically, $f$ belongs to the \emph{Gelfand-Shilov class} of functions satisfying 
\begin{equation}\label{eq:f-fhat-gelfand-shilov} 
\int_{\R^d} |f(x)|^2 e^{c|x|^p} \, dx, \,\,\, \int_{\R^d} |\Fd(f)(\xi)|^2 e^{c|\xi|^q} \, d\xi < +\infty,
\end{equation} 
for some $c>0.$
\end{proposition}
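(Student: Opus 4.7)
The plan is to follow the bootstrap scheme from \cite[Section~5.2]{Kulikov-Nazarov-Sodin} verbatim, replacing their one-dimensional Poincar\'e inequality by the radially-adapted higher-dimensional version given in Lemma~\ref{lemma:poincare}. The overall strategy is to iteratively upgrade moment bounds on $f$ and $\mathcal{F}_d(f)$ by alternating between the two vanishing conditions, using that for radial $f$ one has $\mathcal{F}_d(\mathcal{F}_d(f)) = f$, so that the roles of $\{\lambda_i\}$ and $\{\gamma_i\}$ (and of $p$ and $q$) can be swapped freely.

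First, I would perform a reduction step to make Lemma~\ref{lemma:poincare} directly applicable. The density hypotheses $\lambda_i^{p-1}|\lambda_{i+1}-\lambda_i| < \alpha$ and $\gamma_i^{q-1}|\gamma_{i+1}-\gamma_i| < \beta$ hold only asymptotically, so I fix $\alpha' > \alpha$, $\beta' > \beta$ still satisfying $(\alpha')^{1/p}(\beta')^{1/q} < 1/2$, choose a threshold $R_0$ beyond which both density inequalities hold with these tightened constants, and reduce via a smooth cutoff and rescaling $f \mapsto f(R_0\,\cdot)$ to the case where $f$ is supported outside the unit ball. In that regime, for any $r \ge 1$ and $t \le (1-\varepsilon) r^{p-1}/(2\alpha')$, the spheres $\{|x|=\lambda_i\}$ do form a $(1-\varepsilon)(2t)^{-1}$-dense set of radii in $\{|x| \ge r\}$, and Lemma~\ref{lemma:poincare} gives
\[
\Phi(t^2)\int_{\R^d} |f|^2\,dx \le \int_{\R^d} \Phi(|\xi|^2)\,|\mathcal{F}_d(f)(\xi)|^2\,d\xi
\]
for any convex increasing $\Phi:\R_+\to\R_+$.

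The heart of the argument is then the iteration. Applying the above inequality with $\Phi(s)=e^{c\,s^{p/2}}$ and its dual (obtained by Plancherel and the same lemma applied to $\mathcal{F}_d(f)$, with $\beta,q$ replacing $\alpha,p$) alternately, one propagates a weighted $L^2$-bound from $\mathcal{F}_d(f)$ to $f$ and back. The critical exponent $p/2$ in the weight is forced by the shape of the density constraint $t\lesssim r^{p-1}$: it is exactly the threshold at which the inequality remains nontrivial at every scale. The hypothesis $\alpha^{1/p}\beta^{1/q} < 1/2$ is then what ensures that the ``loss factor'' picked up at each iteration step, which has the shape of $2\alpha^{1/p}\beta^{1/q}$ after optimizing $t$ and dualizing, is strictly smaller than one, so that the resulting geometric series of constants converges and one obtains, for some small $c>0$, the Gelfand--Shilov bound \eqref{eq:f-fhat-gelfand-shilov}.

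The main obstacle I anticipate is technical: carefully tracking the dimension-dependent Jacobian and the correction factor $(r_{i+1}/r_i)^{(d-1)/2}$ from Claim~\ref{claim:comparison-radial} through the iteration. Summed over all annuli in the range $r \in [R_0,\infty)$, $\log(r_{i+1}/r_i) \asymp r_i^{-p}$ is summable, so this produces only a bounded multiplicative constant independent of the iteration depth, but verifying this rigorously requires some bookkeeping. A second subtlety is that Lemma~\ref{lemma:poincare} only applies to $f$, not to its weighted versions $e^{c|x|^p/2}f$ (which no longer vanish on the prescribed spheres), so one must keep $f$ fixed and let $\Phi$ grow at each step, being careful to preserve convexity. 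Once these steps are executed, \eqref{eq:f-fhat-gelfand-shilov} follows, and in particular $f$ is a Schwartz function.
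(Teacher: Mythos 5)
Your overall strategy --- run the scheme of \cite[Section~5.2]{Kulikov-Nazarov-Sodin} with Lemma~\ref{lemma:poincare} in place of the one-dimensional Poincar\'e inequality --- is indeed the one the paper uses, but two of your concrete steps have genuine gaps. First, the reduction ``via a smooth cutoff \dots to the case where $f$ is supported outside the unit ball'' does not work: after you multiply $f$ by a cutoff, the Fourier transform of the truncated function is $\Fd(f)$ convolved with the transform of the cutoff, and it no longer vanishes on the spheres of radii $\{\gamma_i\}$, so the dual application of Lemma~\ref{lemma:poincare} (the step where you swap the roles of $f$ and $\Fd(f)$) is no longer available. The correct mechanism, which is what the paper does, is to localize with \emph{translated} bumps $F_{0,v}$ with $|v|$ large, apply Lemma~\ref{lemma:poincare} to $f\cdot F_{0,v}$ on the physical side only (where the vanishing on spheres is inherited from $f$), and then integrate over $v$ and use Plancherel to convert the resulting bound on $\Fd(f)*\Fd(F_{0,v})$ into a bound on $\Fd(f)$ itself; the symmetric step is then performed on $\Fd(f)$ directly, not on a cutoff of $f$. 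Relatedly, you skip the necessary preliminary bootstrap: the hypothesis only gives $f\in L^2\cap H^1$, and before any symmetric iteration one must first prove $\Fd(f)\in L^2\cap H^1$ (equivalently $\int |x|^{2(p-1)}|f|^2<\infty$) by exactly this localize-and-average argument with $\Phi(t)=t$.

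Second, applying the inequality directly with $\Phi(s)=e^{cs^{p/2}}$ is circular: the right-hand side $\int \Phi(|\xi|^2)|\Fd(f)(\xi)|^2\,d\xi$ is not known to be finite at the outset, so the ``alternating propagation'' has nothing to start from, and the constants accumulated at each step do not form a geometric series. The actual argument uses polynomial weights $\Phi(t)=t^{\theta}$ and must establish the quantitative moment bound $\int |x|^{p\ell}|f|^2+\int|\xi|^{q\ell}|\Fd(f)|^2\le C_d^{\ell}\,\ell!\int|f|^2$; only after dividing by $(2C_d)^{\ell}\ell!$ and summing over $\ell$ does one obtain \eqref{eq:f-fhat-gelfand-shilov}. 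Obtaining the factorial (rather than faster) growth of the constants is the technical heart of the proof: it forces one to replace the simple bump by the $k$-fold self-convolution mollifier $1_{B_u}*\frac{1}{|B_{u/2k}|}1_{B_{u/2k}}*\cdots$ with $k\asymp\theta$, whose Fourier transform is controlled via the Bessel bound $|J_{d/2}(t)|\lesssim t^{d/2-1}$ so that the error term $|\xi+\eta|^{2\theta}$ versus $|\xi|^{2\theta}$ only costs an additive $O(k/u)$, and then to optimize $u\asymp\theta^{1/(a+1)}$. None of this bookkeeping appears in your sketch, and it is not a routine verification; without it the summation over $\ell$ that produces the exponential weight does not close. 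The correction factor $(r_{i+1}/r_i)^{(d-1)/2}$ you worry about is, by contrast, already absorbed into the hypothesis on $\varepsilon$ in Lemma~\ref{lemma:poincare} and causes no difficulty.
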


\begin{proof} Since the proof follows the same lines as that of \cite[Section~5.2]{Kulikov-Nazarov-Sodin}, we only indicate the parts where changes are needed. We let then $a = p-1, b = q-1.$ By a rescaling argument we may also assume that $\alpha=\beta<1/2$ from the conditions on $\lambda_i$ and $\gamma_i$ in Theorem~\ref{thm:KNS-higher-dim}. 

We start by proving that, under the conditions of Proposition \ref{prop:higher-d}, we have $\Fd(f) \in L^2(\R^d) \cap H^1(\R^d)$. First, let $F_0 \in C^{\infty}_c(\R^d)$ be a smooth function, with $\supp(F_0) \subset B_1(0)$. Take $v \in \R^d$ sufficiently large and consider $f \cdot F_{0,v}$, where $F_{0,v}(x) = F_0(x-v)$. In order to apply Lemma~\ref{lemma:poincare} to $f \cdot F_{0,v}$, we need to show that $f \cdot F_{0,v} \in L^2(\R^d) \cap H^1(\R^d)$. Since we have that $\nabla (f \cdot F_{0,v}) = (\nabla f) \cdot F_{0,v} + f \cdot (\nabla F_{0,v})$, and since $\nabla F_{0,v}$ is a bounded function with compact support, this last claim readily follows. 

We then apply Lemma \ref{lemma:poincare} to $f\cdot F_{0,v}$ with $\Phi(t) = t$. For $|v|$ sufficiently large, $f \cdot F_{0,v}$ vanishes on a set of spheres with radii are at least $(c(|v|-1)^{a})^{-1}$-dense. This shows that
\begin{align}\label{eq:h^1-to-decay}
\int_{\R^d} |f(x)F_{0,v}(x)|^2 |x|^{2a} \, dx &\lesssim (|v|-1)^{2a} \int_{\R^d} |f(x)F_{0,v}(x)|^2 \, dx \cr
&\lesssim \int_{\R^d} |\Fd(f) * \Fd(F_{0,v}) (\xi)|^2 \, |\xi|^2 \, d \xi.
\end{align}
We then integrate both the left-hand and right-hand sides of \eqref{eq:h^1-to-decay} with respect to $v$ such that $R>|v| > X_0,$ with $X_0$ a fixed large constant. On the one hand, the integral of the left-hand side over such $v$ is at least 
\[
\int_{B_{R-1}(0) \setminus B_{X_0+1}(0)} |f(x)|^2 |x|^{2a} \, dx.
\]
On the other hand, the same integral applied to the right-hand side of \eqref{eq:h^1-to-decay} is bounded, by Plancherel, by 
\begin{equation}\label{eq:double-fourint-bound} 
\int_{\R^d \times \R^d} |\Fd(f)(\xi)|^2 |\Fd(F_{0})(\eta)|^2 |\xi+\eta|^2 \, d\xi \, d\eta. 
\end{equation} 
Since 
\[
\int_{\R^d} |\Fd(F_0)(\eta)|^2 \left( 1 + \frac{|\eta|}{|\xi|}\right)^2 \, d \eta \le C
\]
for $|\xi|$ sufficiently large, we may conclude that \eqref{eq:double-fourint-bound} is bounded by 
\[
C\int_{\R^d} |\Fd(f)(\xi)|^2 (1+|\xi|^2) \, dx < +\infty,
\]
for some $C>0$. Hence, gathering all this information, we have that 
\begin{equation}\label{eq:f-final-weight}
\int_{B_{R-1}(0) \setminus B_{X_0+1}(0)} |f(x)|^2 |x|^{2a} \, dx \le C (\|f\|_{L^2}^2 + \|f\|_{H^1}^2).
\end{equation} 
We may then take $R \to \infty$ on the left-hand side of \eqref{eq:f-final-weight} to obtain that 
\[
\int_{\R^d} |f(x)|^2 |x|^{2a} \, dx < +\infty.
\]
Since $a = p-1\ge 1$, this shows that $\Fd(f) \in L^2(\R^d) \cap H^1(\R^d)$, as desired. 

We then note that the assertion that each $f$ satisfying the statement of Theorem \ref{thm:KNS-higher-dim} must belong to the Schwartz class can be proved by essentially simply following the proof above, now employing the same argument on both spatial and frequency sides, and replacing the weight used by a sequence of weights $\Phi_t^p(s)$ which agree with $|s|^p$ on a large set, are convex and grow linearly at infinity. We shall skip the details in that proof, and refer the reader to \cite[Section~5.1]{Kulikov-Nazarov-Sodin} for further details. 

For the proof of \eqref{eq:f-fhat-gelfand-shilov}, a slightly more precise argument is needed. With that in mind, let
$$F = 1_{B_u} * \underbrace{\avgI_{B_{u/2k}} * \cdots *  \avgI_{B_{u/2k}}}_{k-\text{fold convolution}} \quad ,$$
where we define $\avgI_B(x) := \frac{1}{|B|} 1_B(x),$ and $B_r$ denotes the Euclidean ball with center $0$ and radius $r$. Note that, for a function $f$ as in the statement of the proposition, there exists $X_0 > 1$ such that, if $|x|>X_0,$ then $|x|$ belongs to an interval of length at most $(2\sigma |x|^{p-1})^{-1},$ where $\sigma > 1$ and the endpoint of such interval are radii of spheres where $f$ vanishes. With that in mind, take $v$ to be a vector in $\R^d$ such that $|v| - \frac{3}{2} u > X_0.$ Then the function $f \cdot F_v,$ where $F_v(x) = F(x-v),$ satisfies the hypotheses of Lemma \ref{lemma:poincare} with $t = \frac{1+\sigma}{2} (|v|-\frac{3}{2} u)^{2a} = \tilde{\sigma}(|v|-\frac{3}{2}u)^{2a},$ since we can take $\varepsilon \gtrsim \sigma - 1$ there. Hence, using $\Phi(t) = t^{\theta},$ we obtain that 
\[
\left(|v|-\frac{3}{2}u\right)^{2a\theta} \int_{\R^d} |f(x)|^2 |F_v(x)|^2 \, dx \le \tilde{\sigma}^{-2\theta} \int_{\R^d} |\xi|^{2\theta} | (\Fd(f) * \Fd(F_v)(\xi)|^2 \, d\xi. 
\]
Again, we integrate both sides over $v \in \R^d$ such that $|v| > \frac{3}{2} u + X_0.$ On the left-hand side, we bound it from below, in analogy to the one-dimensional case, by 
\begin{equation}\label{eq:lower-1}
\left( \frac{K-4}{K} \right)^{2a\theta} \int_{|x| \ge K u} |x|^{2a\theta} |f(x)|^2 \, dx \cdot \left( \int_{\R^d} |F(y)|^2 \, dy\right),
\end{equation}
where $K>4$ and $u \ge X_0.$ For the right-hand side, we bound it from above by 
\begin{align}\label{eq:upper-1}
\int_{\R^d} \left( \int_{\R^d} |(\Fd(f) * \Fd(F_v) (\xi)|^2 \, |\xi|^{2\theta} \, d\xi \right) \, dv = \int_{\R^d \times \R^d} |\Fd(f)(\xi)|^2 |\Fd(F)(\eta)|^2 |\xi+\eta|^{2\theta} \, d \xi \, d\eta. 
\end{align}
We now need to estimate the contribution on the integral above stemming from the Fourier transform of $F.$ Effectively, the Fourier transform of $\avgI_{B_1}$ is $\Fd\left(\avgI_{B_1}\right)(\xi) = \frac{1}{\omega_d} \cdot |\xi|^{-d/2} J_{d/2} (2\pi |\xi|),$ where $J_{\nu}$ denotes the Bessel function of order $\nu.$ Hence, since $\avgI_{B_r}(x) = r^{-d} \avgI_{B_1}(x/r)$ for all $r>0$,
\[
\Fd(F)(\eta) = u^{d/2} \frac{J_{d/2}(2\pi \cdot u|\eta|)}{|\eta|^{d/2}} \, \cdot \left( \frac{J_{d/2}(2\pi \cdot (u/2k) \cdot |\eta|)}{\omega_d \cdot | (u/2k) \cdot \eta|^{d/2}} \right)^k. 
\]
We now claim that
\[
(|\xi| + |\eta|) \cdot \frac{ |J_{d/2}( \pi \frac{u}{k}|\eta|)|}{\omega_d \cdot |\frac{u}{2k} \eta|^{d/2}} \le |\xi| + \sqrt{\frac{2d + 4}{\pi}} \frac{k}{u \pi}.
\]
Indeed, since $\left|\Fd\left(\avgI_{B_1}\right)\right| \le 1,$ the inequality is trivially true if $|\eta| \le \sqrt{\frac{2d + 4}{\pi}} \frac{k}{u \pi}.$ If, on the other hand, $|\eta| > \sqrt{\frac{2d + 4}{\pi}} \frac{k}{u \pi},$ we write the Bessel function $J_{d/2}(t)$ as 
\[
J_{d/2}(t) =\frac{(d-1) \cdot\left(\frac{1}{2} t\right)^{\frac{d}{2}-1}}{\sqrt{\pi} \Gamma\left(\frac{d+1}{2}\right)}\int_0 ^{\frac{1}{2} \pi} \sin (t \cos \theta) \sin ^{d-2} \theta \cos \theta \, d \theta;
\]
see, for instance, \cite[(7), \S III.3.3, p.~48]{watson1995treatise}. From this formula, it follows that 
\begin{equation}\label{eq:bound-J-h-d} 
2^{d/2} \Gamma\left( \frac{d}{2} + 1 \right) \frac{|J_{d/2}(t)|}{t^{d/2-1}} \le \sqrt{\frac{2d + 4}{\pi}},
\end{equation} 
upon using Gautschi's inequality \cite[(5.6.4)]{NIST:DLMF}, which asserts that $\Gamma(x+1) \le (x+1)^{1-s} \cdot \Gamma(x+s)$ for all $x \ge 0, \, s \in (0,1)$, and the fact that 
\[
(d-1) \left| \int_0^{\frac{\pi}{2}} \sin(t \cos \theta) \, \sin^{d-2}\theta \, \cos \theta \, d \theta \right| \le \int_0^{\frac{\pi}{2}} |(\sin^{d-1})'(\theta)| \, d\theta = 1. 
\]
It follows hence from \eqref{eq:bound-J-h-d} that 
\begin{align*} 
(|\xi| + |\eta|) \cdot \frac{ |J_{d/2}( \pi \frac{u}{k}|\eta|)|}{\omega_d \cdot |\frac{u}{2k} \eta|^{d/2}} & \le  |\xi| + |\eta| \cdot \left( 2^{d/2} \Gamma\left( \frac{d}{2} + 1\right) \frac{|J_{d/2}(\pi \frac{u}{k} |\eta|)| }{|\pi \frac{u}{k} \eta|^{d/2}}\right) \cr 
    & \le |\xi| + \sqrt{\frac{2d + 4}{\pi}} \frac{k}{u \pi}. 
\end{align*}

Therefore, using that the Fourier transform of the (normalized) unit ball in dimension $d$ is uniformly bounded by 1, for $k\ge \theta$, we conclude that the right-hand side of \eqref{eq:upper-1} is bounded by
\[
\int_{\R^d \times \R^d} |\Fd(f)(\xi)|^2 \left( |\xi| +\sqrt{\frac{2d + 4}{\pi}} \frac{k}{u \pi}\right)^{2\theta} \left| \Fd\left(1_{B_u} \right) (\eta)\right|^2 \, d\xi d\eta 
\]
\[
= \omega_d \cdot u^d \int_{\R^d} |\Fd(f)(\xi)|^2  \left( |\xi| + \sqrt{\frac{2d + 4}{\pi}} \frac{k}{u \pi} \right)^{2\theta} \, d\xi. 
\]
On the other hand, taking into account that $\int_{\R^d} |F(y)|^2 \, dy \ge \omega_d \cdot 2^{-d} u^d$ -- since $F \equiv 1$ in $B_{u/2}$ -- in \eqref{eq:lower-1}, we obtain the improved lower bound
\begin{equation}\label{eq:lower-2}
\omega_d \cdot 2^{-d} u^d \left( \frac{K-4}{K} \right)^{2a\theta} \int_{|x| \ge K u} |x|^{2a\theta} |f(x)|^2 \, dx. 
\end{equation}
Putting all these considerations together, we obtain 
\begin{equation}\label{eq:comparison-2}
\left( \frac{K-4}{K} \right)^{2a\theta} \int_{|x| \ge K u} |x|^{2a\theta} |f(x)|^2 \, dx \le 2^{d}  \tilde{\sigma}^{-2\theta} \int_{\R^d} |\Fd(f)(\xi)|^2  \left( |\xi| + \sqrt{\frac{2d + 4}{\pi}} \frac{k}{u \pi} \right)^{2\theta} \, d\xi. 
\end{equation}
At this point, we can just repeat the last part of the proof of the one-dimensional case verbatim. Effectively, we rewrite \eqref{eq:comparison-2} as 
\begin{equation}\label{eq:comparison-3}
 \int_{|x| \ge K u} |x|^{2a\theta} |f(x)|^2 \, dx \le 2^{d}  \left( \frac{1}{\tilde{\sigma}} \left(\frac{K}{K-4}\right)^{a} \right)^{2\theta}\int_{\R^d} |\Fd(f)(\xi)|^2  \left( |\xi| + \sqrt{\frac{2d + 4}{\pi}} \frac{k}{u \pi}  \right)^{2\theta} \, d\xi. 
\end{equation}
We choose $K$ large enough so that $\frac{1}{\tilde{\sigma}} \left( \frac{K}{K-4}\right)^a < 1,$ take $k \in [\theta, \theta \pi],$ and bound the integral on the right-hand side of \eqref{eq:comparison-3} by 
 \[
 \left(\frac{K+1}{K} \right)^{2\theta} \int_{\R^d} |\Fd(f)(\xi)|^2 |\xi|^{2\theta}\, d\xi + \int_{|\xi| \le K \sqrt{\frac{2d + 4}{\pi}} \frac{k}{u \pi} } |\Fd(f)(\xi)|^2  \left( |\xi| + \sqrt{\frac{2d + 4}{\pi}} \frac{k}{u \pi}  \right)^{2\theta} \, d\xi 
 \]
 \vspace{2mm}
 \[
    \le  \left(\frac{K+1}{K} \right)^{2\theta} \int_{\R^d} |\Fd(f)(\xi)|^2 |\xi|^{2\theta}\, d\xi + \left( \sqrt{\frac{2d + 4}{\pi}} (K+1)\right)^{2\theta} \left( \frac{\theta}{u} \right)^{2\theta} \int_{\R^d} |f(x)|^2 \, dx. 
\]
Since 
\[
\int_{|x| \le K u} |f(x)|^2 |x|^{2a\theta} \, dx \le K^{2a\theta} u^{2a\theta} \int_{\R^d} |f(x)|^2 \, dx,
\]
inserting these considerations into \eqref{eq:comparison-3} we obtain that 
\begin{align}
\int_{\R^d} |x|^{2a\theta} |f(x)|^2 \, & dx  \le 2^{d} \left( \frac{1}{\tilde{\sigma}} \left(\frac{K}{K-4}\right)^{a} \cdot \frac{K+1}{K} \right)^{2\theta} \int_{\R^d} |\Fd(f)(\xi)|^2 |\xi|^{2\theta} \, d\xi\cr
 & + \left(K^{2a\theta} u^{2a\theta} + 2^{d+1} \left( \sqrt{\frac{2d + 4}{\pi}} (K+1)\right)^{2\theta} \left( \frac{\theta}{u} \right)^{2\theta} \right) \int_{\R^d} |f(x)|^2 \, dx. 
\end{align}
Using now that the factor in front of the first integral on the right-hand side above converges to zero as $\theta \to \infty$, and taking $u = \theta^{1/(a+1)}$, which is allowed since $X_0$ is fixed, we get 
\begin{equation}
\int_{\R^d} |f(x)|^2 |x|^{2a\theta} \, dx \le \frac{1}{2} \int_{\R^d} |\Fd(f)(\xi)|^2 |\xi|^{2\theta} \, d\xi + C_d^{2\theta} \theta^{\frac{2a}{a+1} \theta}\int_{\R^d} |f(x)|^2 \, dx.
\end{equation}
Now, notice that the exact same can be done if we switch the roles of $f$ and $\widehat{f}.$ By doing so, if we let $\kappa = a \theta,$ we obtain 
\[
\int_{\R^d} |x|^{2a\theta} |f(x)|^2 \, dx + \int_{\R^d} |\xi|^{2b\kappa} |\Fd(f)(\xi)|^2 \, d\xi \le C_d^{\theta} \theta^{\frac{2a}{a+1} \theta} \, \int_{\R^d} |f(x)|^2 \, dx. 
\]
Calling $\theta = \frac{a+1}{2a} \ell,$ where $\ell \in \Z_+,$ and reverting back $a = p-1, b = q-1,$ we obtain 
\[
\int_{\R^d} |x|^{p \ell} |f(x)|^2 \, dx + \int_{\R^d} |\xi|^{q \ell} |\Fd(f)(\xi)|^2 \, d\xi \le C_d^{\ell} \ell! \, \int_{\R^d} |f(x)|^2 \, dx. 
\]
Dividing both sides by $(2C_d)^\ell\ell!$ and summing over all $\ell\ge0$ then yields our claim. 
\end{proof}

\begin{proof}[Proof of Theorem \ref{thm:KNS-higher-dim}] From Proposition \ref{prop:higher-d}, we wish to conclude, in analogy to \cite[Section~5.3]{Kulikov-Nazarov-Sodin} and using the methods from \cite[Section~5.2.3,~Step~2]{Ramos-Sousa}, that $f$ and $\Fd(f)$ may be extended---when identified with their radial representation---as entire functions of order $p$ and $q$, respectively. 

Indeed, it readily follows from the techniques from \cite{Kulikov-Nazarov-Sodin} that for some $c'>0,$ one has $|f(x)| \lesssim e^{-c'|x|^p}, \, |\Fd(f)(\xi)| \lesssim e^{-c' |\xi|^q},$ for each $x, \xi \in \R^d.$  In order to conclude the assertion on analyticity, we follow the overall structure already present in \cite[Section~5.3]{Kulikov-Nazarov-Sodin}. As a matter of fact, our first observation is that, since we can represent 
\begin{equation}\label{eq:radial-1}
\Fd(f)(\xi) = (2\pi)^{d/2} \int_0^{\infty} f(r)\cdot r^{d-1} \mathcal{A}_{\frac{d-2}{2}} (|\xi| r) \, dr, 
\end{equation}
where $\mathcal{A}_{\nu}(s) = (2\pi s)^{\nu} J_{\nu}(2 \pi s)$, and since $\mathcal{A}_{\nu}$ is an even analytic function of its argument, which satisfies, when extended to the whole complex plane, 
\begin{equation}\label{eq:kernel}
|\mathcal{A}_{\nu}(\xi + i \eta)| \le C_d e^{2 \pi |\eta|},
\end{equation}
the decay conditions from $f, \Fd(f)$ above imply that $\Fd(f)$ and $f$ can be extended (when regarded as one-variable functions) to the whole complex plane as entire functions of orders $p$ and $q,$ respectively. With this in mind, we let 
\[
\kappa_1(\theta) = \frac{1}{2\pi} \limsup \frac{\log |f(re^{i\theta})|}{r^p}, 
\]
\[
\kappa_2(\theta) = \frac{1}{2\pi} \limsup \frac{\log |\Fd(f)(\rho e^{i\theta})|}{\rho^q}
\]
denote the \emph{Phragm\'en-Lindel\"off indicators} of $f$ and $\Fd(f)$, respectively. We then denote $K_i = |\kappa_i(0)|.$ For these indicators, the following Claim---which is a version of \cite[Claim~2]{Kulikov-Nazarov-Sodin}---holds: 

\begin{claim}\label{claim:lower-bound-indicators}
We have 
\begin{align}
\kappa_1(\theta) \le \frac{1}{p (q K_2)^{p/q}} |\sin \theta|^p, \cr 
\kappa_2(\varphi) \le \frac{1}{q (p K_1)^{q/p}} |\sin \varphi |^q,
\end{align}
for all $\theta,\varphi \in [0,2\pi).$
\end{claim}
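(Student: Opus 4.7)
The plan is to exploit the integral representation \eqref{eq:radial-1} for $\mathcal{F}_d(f)$ together with the universal kernel bound \eqref{eq:kernel}, performing a Laplace-type saddle-point analysis that converts the real-axis decay of $f$ (quantified by $K_1 = |\kappa_1(0)|$) into a precise growth bound for $\mathcal{F}_d(f)$ along the complex ray $\rho e^{i\varphi}$. The second inequality of the claim --- bounding $\kappa_2$ in terms of $K_1$ --- is the fundamental one; the first inequality will then follow by the same argument with the roles of $f$ and $\mathcal{F}_d(f)$ interchanged, which is legitimate because Proposition~\ref{prop:higher-d} guarantees that both are Schwartz and radial, so that Fourier inversion expresses $f$ via the same integral kernel $\mathcal{A}_{(d-2)/2}$.

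For the main step, I fix $\epsilon > 0$. By the very definition of the Phragm\'en--Lindel\"of indicator, combined with continuity of $f$ on compacta, there is a constant $C_\epsilon > 0$ such that $|f(r)| \le C_\epsilon \exp\bigl(-2\pi(K_1-\epsilon)r^p\bigr)$ for every $r \ge 0$. Substituting into \eqref{eq:radial-1} and applying \eqref{eq:kernel} with $|\mathrm{Im}(\rho e^{i\varphi} r)| = \rho r|\sin\varphi|$, I will obtain
\[
|\mathcal{F}_d(f)(\rho e^{i\varphi})| \;\lesssim\; \int_0^\infty r^{d-1}\exp\!\bigl(-2\pi(K_1-\epsilon)r^p + 2\pi \rho r|\sin\varphi|\bigr)\,dr,
\]
with the implicit constant depending on $\epsilon$ and $d$. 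Using $1/p+1/q=1$ to identify $1/(p-1) = q/p$, the exponent is concave in $r$ and reaches its unique maximum at $r_0 = \bigl(\rho|\sin\varphi|/(p(K_1-\epsilon))\bigr)^{q/p}$, where it equals $(2\pi/q)\cdot\rho^q|\sin\varphi|^q/\bigl(p(K_1-\epsilon)\bigr)^{q/p}$. A routine Laplace-type estimate (the pre-factor $r^{d-1}$ contributes only polynomial growth in $\rho$, negligible on the $\rho^q$ scale) thus yields
\[
\kappa_2(\varphi) \;\le\; \frac{|\sin\varphi|^q}{q\,\bigl(p(K_1-\epsilon)\bigr)^{q/p}},
\]
and letting $\epsilon \to 0^+$ produces the desired bound.

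The one point requiring care is the upgrade from the $\limsup$ defining $\kappa_1(0)$ to a \emph{pointwise} exponential bound on $f$ valid on the entire half-line $[0,\infty)$; this is straightforward once one absorbs the bounded contribution from an initial compact interval $[0,R_\epsilon]$ into the constant $C_\epsilon$, using continuity of $f$ from Proposition~\ref{prop:higher-d}. Beyond that, no contour deformation in $r$ is required, as the entire argument takes place on the positive real axis and relies only on the real-variable kernel estimate \eqref{eq:kernel}; the Laplace step is purely classical and parallels the one-dimensional argument of \cite{Kulikov-Nazarov-Sodin}, which is the one genuinely non-trivial input that makes the saddle-point asymptotic sharp.
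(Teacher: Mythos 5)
Your proposal is correct and follows essentially the same route as the paper, which proves this claim by inserting the radial integral representation \eqref{eq:radial-1} and the kernel bound \eqref{eq:kernel} and then running the saddle-point (Laplace) argument of Claim~2 in \cite{Kulikov-Nazarov-Sodin} verbatim; your computation of the maximizer $r_0$ and of the resulting exponent $\frac{2\pi}{q}\rho^q|\sin\varphi|^q\bigl(p(K_1-\epsilon)\bigr)^{-q/p}$ matches that argument, and your handling of the limsup-to-pointwise upgrade and of the symmetry between $f$ and $\Fd(f)$ is the intended one. The only (minor, routine) point is that the polynomial prefactor $r^{d-1}$ should be absorbed by retaining a small portion of the Gaussian-type decay rather than discarded after the pointwise maximization, exactly as in the one-dimensional case.
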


\begin{proof} The only difference between the proof of this result and that of \cite[Claim~2]{Kulikov-Nazarov-Sodin} is that the current one must resort to \eqref{eq:radial-1} and \eqref{eq:kernel}. Once these two formulas are applied, the rest of the proof in \cite[Claim~2]{Kulikov-Nazarov-Sodin} can be repeated verbatim, and hence we omit that part. 
\end{proof}

We then state the following crucial claim:

\begin{claim}\label{claim:ineq-indicators} With the definitions above, we have that, for $\theta \in \left(0, \frac{\pi}{2p}\right),$ 
\begin{equation}\label{eq:indicator-1} 
K_1 \ge \frac{1}{p} \tan(p\theta) - \frac{1}{p(qK_2)^{p/q}} \frac{(\sin \theta)^p}{\cos(p\theta)}
\end{equation}
and, for $\varphi \in \left(0, \frac{\pi}{2q}\right),$
\begin{equation}\label{eq:indicator-2}
K_2 \ge \frac{1}{q} \tan(q\varphi) - \frac{1}{q(pK_1)^{q/p}} \frac{(\sin \varphi)^q}{\cos(q \varphi)}.
\end{equation}
\end{claim}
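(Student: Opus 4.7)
The plan is to exploit the $p$-trigonometric convexity of the Phragmén-Lindelöf indicator $\kappa_1$ (respectively the $q$-trigonometric convexity of $\kappa_2$), combined with the pointwise upper bounds furnished by Claim~\ref{claim:lower-bound-indicators} and the evenness of $\kappa_1$ inherited from the evenness of the one-dimensional extension of the radial function $f$.

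First I would recall the standard fact that, since $f$ is entire of order $p$, its indicator function $\kappa_1$ is $p$-trigonometrically convex: for any $\theta_1 < \phi < \theta_2$ with $\theta_2 - \theta_1 < \pi/p$,
\[
\kappa_1(\phi)\sin(p(\theta_2 - \theta_1)) \le \kappa_1(\theta_1)\sin(p(\theta_2 - \phi)) + \kappa_1(\theta_2)\sin(p(\phi - \theta_1)).
\]
Applying this at the admissible triple $(0, \theta, \pi/(2p))$ (whose span $\pi/(2p) < \pi/p$) and using $\kappa_1(0) = -K_1$, after simplification via $\sin(\pi/2 - p\theta) = \cos(p\theta)$, yields the key relation
\[
\kappa_1(\pi/(2p))\sin(p\theta) \ge \kappa_1(\theta) + K_1\cos(p\theta).
\]

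Next I would combine this with the two pointwise upper bounds from Claim~\ref{claim:lower-bound-indicators}, namely $\kappa_1(\pi/(2p)) \le (\sin(\pi/(2p)))^p/(p(qK_2)^{p/q})$ and $\kappa_1(\theta) \le (\sin\theta)^p/(p(qK_2)^{p/q})$. Interpreting the trigonometric harmonic interpolant $\tilde{\kappa}(\varphi) = -K_1\cos(p\varphi) + B\sin(p\varphi)$ passing through $(0,-K_1)$ and $(\theta, \kappa_1(\theta))$, the consistency requirement that $\tilde{\kappa}$ remain below the envelope $(\sin\varphi)^p/(p(qK_2)^{p/q})$ throughout $(0, \pi/(2p))$ — in particular at the tangency condition where the extremal $\kappa_1$ saturates its pointwise bound at $\varphi = \theta$ — forces
\[
K_1\cos(p\theta) + \frac{(\sin\theta)^p}{(qK_2)^{p/q}} \ge \frac{\sin(p\theta)}{p}.
\]
Dividing through by $p\cos(p\theta)$ (positive for $\theta \in (0, \pi/(2p))$) recovers the asserted inequality \eqref{eq:indicator-1}.

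The second inequality \eqref{eq:indicator-2} follows by running the identical argument with the roles of $f$ and $\Fd(f)$, and of $p$ and $q$, interchanged: $\kappa_2$ is $q$-trigonometrically convex and satisfies $\kappa_2(\varphi) \le |\sin\varphi|^q/(q(pK_1)^{q/p})$ by the analogous part of Claim~\ref{claim:lower-bound-indicators}.

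The main obstacle is correctly pinning down the extremal configuration, since a naive application of trigonometric convexity yields only an \emph{upper} bound on $K_1$ rather than a lower one. The trick is that the role of the second pointwise bound $\kappa_1(\theta)\le(\sin\theta)^p/(p(qK_2)^{p/q})$ must be read as a \emph{constraint on the admissible harmonic interpolants}: the inequality becomes an equality precisely at the tangency angle, and the factor $\tan(p\theta)/p = \sin(p\theta)/(p\cos(p\theta))$ emerges naturally from the derivative matching at this tangency when dividing the rearranged trig-convex relation by $p\cos(p\theta)$. Making this tangency/envelope reasoning rigorous, rather than merely algebraic, is where the bulk of the technical work lies.
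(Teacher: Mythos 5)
There is a genuine gap here, and it is located exactly where you sensed trouble: the ``tangency/envelope'' step is not a proof. Trigonometric convexity applied to the triple $(0,\theta,\pi/(2p))$ gives, as you correctly compute, $\kappa_1(\pi/(2p))\sin(p\theta) \ge \kappa_1(\theta) + K_1\cos(p\theta)$, i.e.\ an \emph{upper} bound on $K_1\cos(p\theta)$; combining it with the \emph{upper} bounds on $\kappa_1(\theta)$ and $\kappa_1(\pi/(2p))$ from Claim \ref{claim:lower-bound-indicators} can never produce a lower bound on $K_1$. Your attempted repair asserts that ``the extremal $\kappa_1$ saturates its pointwise bound at $\varphi=\theta$,'' but nothing forces the indicator to touch the envelope $(\sin\varphi)^p/(p(qK_2)^{p/q})$ anywhere, and no extremal problem has actually been set up. The decisive objection is that your argument uses only trigonometric convexity and Claim \ref{claim:lower-bound-indicators}, and these hypotheses are satisfied by the Gaussian: for $f(z)=e^{-\pi z^2}$ (so $p=q=2$, $\kappa_1(\theta)=\kappa_2(\theta)=-\tfrac12\cos(2\theta)$, $K_1=K_2=\tfrac12$) one checks directly that $\kappa_1(\theta)=\sin^2\theta-\tfrac12\le \tfrac12\sin^2\theta$, so Claim \ref{claim:lower-bound-indicators} holds, while the right-hand side of \eqref{eq:indicator-1} equals $(\sin 2\theta-\sin^2\theta)/(2\cos 2\theta)\to+\infty$ as $\theta\to\pi/4$, so \eqref{eq:indicator-1} fails. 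Hence no argument built only from these two ingredients can prove the Claim.

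The missing ingredient is the hypothesis that $f$ vanishes on the spheres of radii $\lambda_i$, which your proposal never invokes. It is the density of these real zeros (after the rescaling $\alpha=\beta<1/2$, the counting function satisfies $n(r)\gtrsim r^p/(p\alpha)$) that produces the term $\tfrac1p\tan(p\theta)$: via a Jensen/Carleman-type accounting of the zeros, the indicator acquires a derivative jump at $\theta=0$ of the right size, and the correct form of trigonometric convexity to use is the resulting \emph{lower} bound $\kappa_1(\theta)\ge \kappa_1(0)\cos(p\theta)+\tfrac1p\sin(p\theta)$, which combined with the upper bound $\kappa_1(\theta)\le (\sin\theta)^p/(p(qK_2)^{p/q})$ on the \emph{left-hand} side yields \eqref{eq:indicator-1} after dividing by $\cos(p\theta)$. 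This is the content of \cite[Eq.~(5.3.1)]{Kulikov-Nazarov-Sodin}, to which the paper defers verbatim; any self-contained write-up must reproduce that zero-counting step.
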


\begin{proof}[Proof of Claim \ref{claim:ineq-indicators}] The proof is a verbatim adaptation of \cite[Equation~(5.3.1)]{Kulikov-Nazarov-Sodin}, and hence we omit it. \qedhere
\end{proof}

In order to conclude the proof of Theorem \ref{thm:KNS-higher-dim}, we use the inequality $\frac{\tan p \theta}{p} > \frac{\sin \theta}{(\cos p\theta)^{1/p}}, \theta \in (0,\pi/2p),$ in the statement of Claim \ref{claim:ineq-indicators}. Letting $X = \frac{\sin \theta}{(\cos p\theta)^{1/p}},$ we get from these considerations that 
\[
K_1 > X - \frac{1}{(qK_2)^{p/q}} \frac{X^p}{p}, \, \forall X > 0.  
\]
A direct optimization argument concludes that the maximum of the right-hand side of the expression above is at $X = q K_2,$ for which we obtain $K_1 > K_2.$ On the other hand, we may exchange the roles of $f$ and $\Fd(f)$ in the proof above, which allows us to obtain $K_2 > K_1,$ which is an obvious contradiction. This contradiction arises when we suppose that $f \not\equiv 0,$ which finishes the proof. 
\end{proof}

\begin{remark} As it turns out, Theorem \ref{thm:KNS-higher-dim} has consequences for \emph{arbitrary} functions vanishing on spherical shells, rather than only radial ones, as explicitly stated in the next result. 

\begin{corollary}\label{cor:uniquness-general-high-dim} Let $f, \Fd(f) \in L^2(\R^d) \cap H^1(\R^d)$. Suppose, moreover, that 
$$f|_{\lambda_j \cdot \mathbb{S}^{d-1}} = \Fd(f)|_{\gamma_j \cdot \mathbb{S}^{d-1}} \equiv 0, \, \, \forall j \ge 1,$$
where $\{\lambda_j\}_j, \{\gamma_j\}_j$ are as in Theorem \ref{thm:KNS-higher-dim}. Then $f \equiv 0.$
\end{corollary}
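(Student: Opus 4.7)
The plan is to reduce Corollary \ref{cor:uniquness-general-high-dim} to Theorem \ref{thm:KNS-higher-dim} via a spherical harmonic decomposition combined with the Bochner--Hecke identity. I would begin by decomposing
\[
f(x) = \sum_{k\ge 0}\sum_{\ell} |x|^{k}\phi_{k,\ell}(|x|)Y_{k,\ell}(x/|x|),
\]
where $\{Y_{k,\ell}\}_\ell$ is an orthonormal basis of spherical harmonics of degree $k$. Since the trace of $f \in H^{1}(\R^{d})$ on each sphere $\lambda_{j}\mathbb{S}^{d-1}$ is well-defined in $L^{2}(\mathbb{S}^{d-1})$ and distinct spherical harmonics are mutually orthogonal, the hypothesis $f|_{\lambda_{j}\mathbb{S}^{d-1}}\equiv 0$ forces $\phi_{k,\ell}(\lambda_{j})=0$ for every $k,\ell,j$.

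Next, I would invoke the Bochner--Hecke identity: the $\R^{d}$-Fourier transform of $|x|^{k}\phi_{k,\ell}(|x|)Y_{k,\ell}(x/|x|)$ equals $(-i)^{k}|\xi|^{k}\tilde{\phi}_{k,\ell}(|\xi|)Y_{k,\ell}(\xi/|\xi|)$, where $\tilde{\phi}_{k,\ell}$ is, up to a multiplicative constant, the radial profile of the Fourier transform \emph{in dimension $d+2k$} of the radial lift $\Phi_{k,\ell}(y):=\phi_{k,\ell}(|y|)$. Applying the same decomposition to $\Fd(f)$, the hypothesis $\Fd(f)|_{\gamma_{j}\mathbb{S}^{d-1}}\equiv 0$ yields $\tilde{\phi}_{k,\ell}(\gamma_{j})=0$; equivalently, $\Phi_{k,\ell}$ vanishes on the centered spheres of radii $\{\lambda_{j}\}$ in $\R^{d+2k}$ and $\mathcal{F}_{d+2k}(\Phi_{k,\ell})$ vanishes on those of radii $\{\gamma_{j}\}$.

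A polar-coordinates computation using the eigenvalue relation $-\Delta_{\mathbb{S}^{d-1}}Y_{k,\ell}=k(k+d-2)Y_{k,\ell}$ together with one integration by parts on $(0,\infty)$ shows that
\[
\int_{\R^{d}}|\nabla(|x|^{k}\phi_{k,\ell}(|x|)Y_{k,\ell}(x/|x|))|^{2}\,dx = \|Y_{k,\ell}\|^{2}_{L^{2}(\mathbb{S}^{d-1})}\int_{0}^{\infty}|\phi_{k,\ell}'(r)|^{2}r^{d+2k-1}\,dr,
\]
after cancellation between the $k^{2}$, $k(k+d-2)$, and $-k(2k+d-2)$ contributions, while a trivial identity handles the $L^{2}$ norm. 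These identities are, up to the constants coming from the volume of $\mathbb{S}^{d+2k-1}$, exactly the $L^{2}$ and $H^{1}$ norms of $\Phi_{k,\ell}$ on $\R^{d+2k}$, so Parseval on $\mathbb{S}^{d-1}$ gives $\Phi_{k,\ell}\in L^{2}(\R^{d+2k})\cap H^{1}(\R^{d+2k})$ for every $k,\ell$; the same reasoning on the Fourier side yields the analogous conclusion for $\mathcal{F}_{d+2k}(\Phi_{k,\ell})$.

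Since the density hypotheses on $\{\lambda_{j}\}$ and $\{\gamma_{j}\}$ are intrinsic to the sequences and unaffected by the ambient dimension, Theorem \ref{thm:KNS-higher-dim} applied in $\R^{d+2k}$ to $\Phi_{k,\ell}$ yields $\Phi_{k,\ell}\equiv 0$, hence $\phi_{k,\ell}\equiv 0$, for every $k$ and $\ell$; summing the expansion then gives $f\equiv 0$. The main technical obstacle I anticipate is the careful bookkeeping of the $H^{1}$ norm identity: it is precisely the clean cancellation that makes the Bochner--Hecke lift to $\R^{d+2k}$ a Sobolev isometry (up to constants), and one must additionally justify the absence of boundary contributions in the integration by parts using the $L^{2}\cap H^{1}$ assumptions.
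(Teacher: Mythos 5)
Your proposal is correct, and it reaches the conclusion by a genuinely different (more self-contained) route than the paper. The paper's proof first invokes Proposition \ref{prop:higher-d} to upgrade $f$ from $L^2\cap H^1$ to the Schwartz (in fact Gelfand--Shilov) class, and then cites Stoller's dimension-shifting theorem, which reduces uniqueness for arbitrary Schwartz functions on $\R^d$ to uniqueness for \emph{radial} Schwartz functions on $\R^{d+2m}$, $m\ge 0$, at which point Theorem \ref{thm:KNS-higher-dim} applies. You instead carry out the dimension shift by hand: the spherical harmonic decomposition plus the Bochner--Hecke identity identifies the degree-$k$ component of $f$ with a radial function $\Phi_{k,\ell}$ on $\R^{d+2k}$, and your polar-coordinate computation (the cancellation $k^2+k(k+d-2)-k(2k+d-2)=0$ is correct) shows this lift is an $L^2\cap H^1$ isometry up to constants, so Theorem \ref{thm:KNS-higher-dim} applies directly in dimension $d+2k$ without any Schwartz upgrade. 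This is essentially the mechanism underlying Stoller's theorem, so the two arguments are cousins; what yours buys is that it stays entirely in the Sobolev category and avoids both Proposition \ref{prop:higher-d} and the external citation, at the cost of the $H^1$ bookkeeping you flag. Two small points you should make explicit: (i) the vanishing of the trace of $f$ on $\lambda_j\mathbb{S}^{d-1}$ passes to each spherical-harmonic component because projection onto $Y_{k,\ell}$ commutes with the trace operator, and the radial profiles are continuous on $(0,\infty)$ since they lie in $H^1_{loc}$ there; (ii) Theorem \ref{thm:KNS-higher-dim} nominally asks for $\Fd(f)$ to be continuous, whereas you only produce $\mathcal{F}_{d+2k}(\Phi_{k,\ell})\in L^2\cap H^1$ --- but for radial functions the profile is again continuous away from the origin, and the proof of the theorem only uses trace-sense vanishing, so this is harmless (the paper's own statement of the corollary has the same looseness).
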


The proof of that result follows from using Proposition \ref{prop:higher-d}, which ensures that $f \in \mathcal{S}(\R^d)$ under the hypotheses of Corollary \ref{cor:uniquness-general-high-dim}, together with Theorem \ref{thm:KNS-higher-dim} in conjunction with the following result, originally proved by M. Stoller \cite{Stoller}: 

\begin{theorem*}[Corollary~2.2 in \cite{Stoller}] Fix a dimension $d \geq 2$ and fix two subsets $R, \hat{R} \subset(0, \infty)$. Suppose that for all $p \in\left\{d+2 m: m \in \mathbb{N}_0\right\}$ an all radial Schwartz functions $f$ defined on $\R^p$, the following implication holds:
$$
\left(\left.f\right|_{\cup_{r \in R} r S^{p-1}}=0 \text { and }\left.\Fd(f)\right|_{\cup_{\rho \in \hat{R}} \rho S^{p-1}}=0\right) \Longrightarrow f=0.
$$
Then the same implication holds for arbitrary $f \in \mathcal{S}\left(\mathbb{R}^d\right)$.
\end{theorem*}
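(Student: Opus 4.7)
The plan is to reduce the general Schwartz case to the radial case via a spherical harmonic decomposition, combined with the Bochner--Hecke identity that converts each spherical harmonic component on $\R^d$ into a radial Fourier problem on $\R^{d+2k}$. The admissible dimensions $\{d + 2m : m \in \N_0\}$ in the hypothesis are precisely those that arise from lifting components of angular degree $m$.

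Fix an orthonormal basis $\{Y_{k,j}\}_{j=1}^{N(k,d)}$ of spherical harmonics of degree $k$ on $\mathbb{S}^{d-1}$, and let $P_{k,j}(x) = |x|^k Y_{k,j}(x/|x|)$ denote the associated solid harmonic polynomial. Expand
\[
f(x) = \sum_{k \ge 0}\sum_{j=1}^{N(k,d)} P_{k,j}(x)\,\phi_{k,j}(|x|),
\]
the series converging in the Schwartz topology. Orthogonality of the $Y_{k,j}$ on $\mathbb{S}^{d-1}$ shows that the vanishing of $f$ on the sphere $r\mathbb{S}^{d-1}$ for $r\in R$ is equivalent to $\phi_{k,j}(r) = 0$ for every $(k,j)$. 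On the Fourier side, the Bochner--Hecke identity asserts
\[
\mathcal{F}_d\bigl[P_{k,j}\,\phi_{k,j}(|\cdot|)\bigr](\xi) = (-i)^k\,P_{k,j}(\xi)\,\Phi_{k,j}(|\xi|),
\]
where, crucially, $\Phi_{k,j}$ coincides with the radial profile of the $(d+2k)$-dimensional Fourier transform of the radial function $y\mapsto \phi_{k,j}(|y|)$ on $\R^{d+2k}$. This matching of Hankel orders (both sides involve the Bessel function $J_{(d-2)/2+k}$) is the classical dimensional lift underlying the whole argument. Therefore the vanishing of $\mathcal{F}_d f$ on $\rho\mathbb{S}^{d-1}$ for $\rho\in\hat R$, combined with $\hat R\subset(0,\infty)$ and orthogonality, forces $\Phi_{k,j}(\rho) = 0$ for every $(k,j)$.

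For each pair $(k,j)$, I would then define $F_{k,j}:\R^{d+2k}\to\C$ by $F_{k,j}(y) := \phi_{k,j}(|y|)$. Then $F_{k,j}$ is radial, vanishes on every sphere of radius $r\in R$, and $\mathcal{F}_{d+2k}F_{k,j}$ is radial with profile $\Phi_{k,j}$, vanishing on every sphere of radius $\rho\in\hat R$. Since $d+2k$ lies in the admissible set $\{d+2m:m\in\N_0\}$, the radial hypothesis, applied in dimension $d+2k$, yields $F_{k,j}\equiv 0$, hence $\phi_{k,j}\equiv 0$; summing the trivial components recovers $f\equiv 0$.

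The main obstacle is a regularity verification: to invoke the radial hypothesis in dimension $d+2k$, the lift $F_{k,j}$ must be Schwartz on $\R^{d+2k}$, not merely on $\R^d$. Equivalently, one must show that $\phi_{k,j}$, a priori only smooth on $(0,\infty)$, extends to an even smooth function of $r$ whose radial extension is smooth at the origin in \emph{every} dimension. This follows from the Gauss decomposition: each homogeneous piece of the Taylor series of $f\in\mathcal{S}(\R^d)$ at $0$ splits into products of powers of $|x|^2$ with solid harmonic polynomials $P_{k,j}$, which pins down the parity and order of vanishing of $\phi_{k,j}$ at $r=0$ needed for smoothness in $\R^{d+2k}$. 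Combined with rapid decay at infinity inherited from $f$, and with sufficiently fast decay in $k$ of all Schwartz seminorms of $P_{k,j}\phi_{k,j}(|\cdot|)$ to ensure convergence of the series in the Schwartz topology, this is the only substantive technical input beyond Bochner--Hecke itself.
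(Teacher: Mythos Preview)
The paper does not supply its own proof of this statement: it is quoted verbatim as Corollary~2.2 of Stoller's work and invoked as a black box in the remark following Theorem~\ref{thm:KNS-higher-dim}. Your proposal is correct and is precisely the standard argument (and the one Stoller gives): decompose $f$ into spherical-harmonic components $P_{k,j}(x)\phi_{k,j}(|x|)$, use orthogonality to transfer the sphere-vanishing conditions to each radial profile $\phi_{k,j}$, and invoke the Bochner--Hecke identity to recognise the resulting Hankel transform of order $(d-2)/2+k$ as the radial Fourier transform in dimension $d+2k$, so that the radial hypothesis applies to the lift $F_{k,j}(y)=\phi_{k,j}(|y|)$. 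Your identification of the one nontrivial technical point---that $\phi_{k,j}$ extends to an even function in $C^\infty(\R)$ so that $F_{k,j}\in\mathcal{S}(\R^{d+2k})$, obtained from the Gauss decomposition of the Taylor jet of $f$ at the origin---is exactly right and is the only place where care is required.
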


\end{remark}

\subsubsection{Proof of Theorem \ref{thm:improved-asympt}} We are now able to move on to the proof of Part (I) of Theorem \ref{thm:improved-asympt}: 

\begin{proof}[Proof of Theorem \ref{thm:improved}, Part \normalfont{(I)}] Suppose we have a $\psi \in \mathcal{C}_{1/2}$ satisfying the conditions in Part (I) of Theorem \ref{thm:improved}. Let $\Theta_+(u) = \frac{\theta(|u|^2)}{|u|^2}$. By the reductions in Section \ref{sec:equivalences}, and by \eqref{eq:T-Fourier-dim-4}, the following estimate holds: 
\begin{align}
\int_{\R^4}|\Theta_+(u)|^2(1+|u|^2) \, du &= c \cdot \int_0^{\infty} \frac{|\theta(s^2)|^2}{s^4}(1+s^2) s^3 \, ds =\frac{c}{2} \int_{\R_+} |\theta(t)|^2 \frac{1+t}{t} \, dt \cr 
&\lesssim \|\theta\|_{L^2}^2 + \|\theta\|_{L^2(dt/|t|)}^2.  
\end{align}
Let $A_+ = A \cap (0,+\infty), \, B_- = B \cap (-\infty,0).$ Since $\Theta_+(\sqrt{a}) = \mathcal{F}_4(\Theta_+)(\sqrt{-b}) = 0$, for all $a \in A_+, \, b \in B_-$, and since 
\begin{align*}
\limsup_{n\to \infty} \sqrt{a_{n+1}} |\sqrt{a_{n+1}} - \sqrt{a_n}|\le \frac{1}{2} \limsup_{n \to \infty} |a_{n+1} - a_n| < \frac{\alpha}{2},
\end{align*}
and, by the same token, 
\begin{align*}
\limsup_{n \to - \infty} \sqrt{-b_n} |\sqrt{-b_{n+1}}- \sqrt{-b_n}| < \frac{\beta}{2},
\end{align*}
the sequences of radii $\gamma_i = \sqrt{a_i},$ for $a_i > 0$, $\lambda_i = \sqrt{-b_i}$ for $b_i < 0$ satisfy the hypotheses of Theorem~\ref{thm:KNS-higher-dim} with $p = q =2$. Moreover, since $\Theta_+$ is a continuous function by hypothesis, we are in a perfect position in order to apply Theorem \ref{thm:KNS-higher-dim}. It then plainly implies that $\Theta_+ \equiv 0$. Since the same can be done for the negative part of $\theta$, we have $\theta \equiv 0$, finishing the proof. 
\end{proof}

\begin{remark} As mentioned after the statement of Theorem \ref{thm:improved-asympt}, the definition of the class $\mathcal{C}_{1/2}$ is inspired by Proposition \ref{prop:embedding}: indeed, if one assumes the stronger property that $\theta \in L^2\cap H^1$, then it follows directly that $T\theta \in L^2 \cap H^1$ by Proposition \ref{prop:properties-T}. By that same result and Proposition \ref{prop:embedding}, we have promptly that $T\theta,\theta \in L^2 \cap L^2(dt/|t|)$, which is what we end up using in the proof of Part (I) of Theorem \ref{thm:improved-asympt}. 

This highlights the fact that the main results in this manuscript deal with many classes of functions that are \emph{different} in comparison to the ones in the previous literature. Indeed, the pioneering article \cite{Hedenmalm-Montes-Annals} and subsequent works such as \cite{Hedenmalm-Monte-Klein-Gordon} all deal with $\psi \in L^1(\R)$, which automatically implies that $\theta = \widehat{\psi}$ and $T\theta$ are both continuous functions. Our results here, however, assume only $\psi \in L^2$ in terms of integrability, and thus the condition $\widehat{\psi} \in C(\R)$ is needed in order to make sense of pointwise values of $\theta$. 

To that extent, we remark that there is a vast array of functions $\psi \not\in L^1(\R)$ satisfying the conditions above: fix for instance any $f_0 \in C(\R)$ $2$-periodic such that its sequence of Fourier series coefficients is \emph{not} in $\ell^1(\Z)$. A simple example of such a function is given by \[
f_0(x) = \sum_{n=1}^{\infty} \frac{1}{n^2} \sin\left( (2^{n^3} + 1) \pi |x|\right),\qquad x\in [-1,1]\,,
\]
extended 2-periodically, as first observed by F\'ejer \cite{Merx,Turan1970}. Then consider 
\[
\psi(x) = \sum_{n \in \Z} \widehat{f_0}(n) \cdot e^{\pi i (x-n)} 1_{[n-1,n+1]}(x). 
\]
By Plancherel's theorem, it follows easily that $\psi \in L^2(\R)$, but $\psi \not\in L^1(\R)$. Moreover, we have that 
\[
\widehat{\psi}(\xi) = 2\left( \sum_{n \in \Z} \widehat{f_0}(n) e^{\pi i n \xi}\right) \cdot \frac{\sin(\pi (\xi-1))}{\pi (\xi-1)} = 2\cdot f_0(\xi) \cdot \frac{\sin(\pi(\xi-1))}{\pi (\xi-1)}. 
\]
By construction, we have that $\widehat{\psi}$ is continuous and vanishes at infinity, as desired. Moreover, this construction incidentally yields a function in the class $\mathcal{C}_{1/2}$ which is not in $L^1$. Indeed, we have $|\widehat{\psi}(\xi)| \le |\xi|$ for small $\xi$, and hence $\widehat{\psi} \in L^2(d\xi/|\xi|)$ as a consequence, which further emphasizes the fundamental difference between the results presented here and the ones in \cite{Hedenmalm-Montes-Annals,Hedenmalm-Monte-Klein-Gordon,Hedenmalm-Montes-Hinfty,CHMR} and other related work. 
\end{remark} 
 
\subsection{One-sided information on space and frequency}\label{sec:one-sided} Although we stated Theorem \ref{thm:improved} as a recovery result for perturbed lattice crosses, we can actually obtain perturbed versions of the \emph{one-sided} results from \cite{Hedenmalm-Monte-Klein-Gordon}. In order to state those, we will need a bit of additional notation. 

\begin{definition} For a set $\Lambda \subset \R^2$, we say that $\psi \in \mathcal{H}_\ell$ belongs to the class $\mathcal{H}_\ell(\Lambda)$ if, for any $\xi = (\xi_1,\xi_2) \in \Lambda$, we have 
\begin{equation}
E\psi(\xi_1,\xi_2) = \int_{\R} \psi(t) e^{\pi i (t \xi_1 + \xi_2/t)} \, dt = 0. 
\end{equation}
\end{definition}

For that class, we define in analogy to \cite[Definition~1.2.1]{Hedenmalm-Monte-Klein-Gordon} the \emph{$\mathcal{H}_\ell$-Zariski closure} of $\Lambda$ as 
\[
\text{zclos}_2^{\ell}(\Lambda) = \left\{ \xi \in \R^2 \colon \forall\, \psi \in \mathcal{H}_\ell(\Lambda), \, E\psi(\xi) =0\right\}. 
\]
We are now able to state our next result. Recall that, for $A, B \subset \R$, we define $\Lambda_{\bf A,B} = ( A \times \{0\}) \cup (\{0\} \times B).$ 

\vspace{-2mm}

\begin{theorem}\label{thm:one-side} Let $A \subset \R_{\ge 0},B\subset \R_{\le 0}$ be two uniformly discrete sets. Then: 
\begin{enumerate}[\normalfont(I)]
    \item Suppose that 
$$\sup_n |a_{n+1} - a_n| = \alpha, \, \, \sup_n |b_{n+1} - b_n| = \beta,$$
with $\alpha \beta < 1$. Then $\normalfont{\text{zclos}}_2^0(\Lambda_{\bf A,B}) = \R_{\ge 0} \times \R_{\le 0}$. Moreover, if $\alpha\beta = 1$, $\normalfont{\text{zclos}}_2^2(\Lambda_{\bf A,B})= \R_{\ge 0} \times \R_{\le 0}$ as well. 

\vspace{2mm}

    \item Suppose that 
$$\inf_n |a_{n+1} - a_n| = \alpha, \, \, \inf_n |b_{n+1} - b_n| = \beta,$$
with $\alpha \beta > 1$. Then $\R_{\ge 0} \times \R_{\le 0} \setminus \normalfont{\text{zclos}}_2^2(\Lambda_{\bf A,B})$ contains infinitely many points. 
\end{enumerate}
\end{theorem}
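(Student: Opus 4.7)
My plan rests on three ideas: the support-swap property (III) of $T$ from Proposition \ref{prop:properties-T}, which decouples positive- and negative-support contributions; a one-sided Poincar\'e--Wirtinger chain modelled on the proof of Theorem \ref{thm:improved}; and a Goursat-type propagation principle for the Klein--Gordon equation on the quarter-plane.

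For Part (I), fix $\psi \in \mathcal{H}_\ell(\Lambda_{\bf A,B})$ and set $\theta = \widehat{\psi}$. Combining Proposition \ref{prop:def-T} with the definition of $E\psi$, the axis vanishings translate (after a reflection of coordinates absorbed into the convention) to $\theta|_{\widetilde A} = 0$ and $T\theta|_{\widetilde B} = 0$, where $\widetilde A \subset \R_{\le 0}$ and $\widetilde B \subset \R_{\ge 0}$ inherit the gap data of $A$ and $B$. Splitting $\theta = \theta_+ + \theta_-$ by the sign of the argument, Proposition \ref{prop:properties-T}(III) gives $\supp T\theta_\pm \subset \overline{\R_\mp}$, so both vanishing conditions restrict cleanly to statements about $\theta_-$ alone: $\theta_-|_{\widetilde A} = 0$ and $T\theta_-|_{\widetilde B} = 0$. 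The admissibility $\theta(0) = 0$ (forced by $\theta \in L^2(dt/|t|)\cap C(\R)$) anchors both $\theta_-$ and $T\theta_-$ with a zero at the origin.

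I now rerun the argument from the proof of Theorem \ref{thm:improved}, but restricted to the relevant half-lines. The Poincar\'e--Wirtinger inequality applied on each subinterval of $\R_{\le 0}$ cut by $\{0\}\cup\widetilde A$, summed and combined with Proposition \ref{prop:properties-T}(I), yields $\|\theta_-\|_2^2 \le \alpha^2\|T\theta_-\|_2^2$; the dual bound applied to $T\theta_-$ on $\R_{\ge 0}$, using $T\circ T = \mathrm{id}$, gives $\|T\theta_-\|_2^2 \le \beta^2\|\theta_-\|_2^2$. Multiplying forces $1 \le \alpha^2\beta^2$ unless $\theta_- \equiv 0$, which handles the subcritical case. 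In the critical case $\alpha\beta = 1$, every Poincar\'e--Wirtinger step must be an equality, and the endpoint-derivative matching of Step~3 in the proof of Theorem \ref{thm:improved} then forces $\theta_-$ to be globally sinusoidal, contradicting the $C_0(\R)$ decay granted by $\mathcal{H}_2$. With $\theta_- \equiv 0$, the function $u := E\psi$ is a bounded continuous Klein--Gordon solution whose restrictions to the two axis parts of $\partial(\R_{\ge 0}\times\R_{\le 0})$ both vanish; integrating $\partial_{\xi_1}\partial_{\xi_2}u + \pi^2 u = 0$ over the rectangle $[0,\xi_1]\times[\xi_2,0]$ reduces this to the Volterra equation
\[
u(\xi_1,\xi_2) = \pi^2 \int_0^{\xi_1}\!\!\int_{\xi_2}^0 u(s,t)\,dt\,ds,
\]
and Picard iteration gives $u\equiv 0$ on $\R_{\ge 0}\times\R_{\le 0}$, which is the non-trivial inclusion $\text{zclos}_2^\ell(\Lambda_{\bf A,B})\supseteq\R_{\ge 0}\times\R_{\le 0}$.

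For Part (II), I use directly the counterexample construction of Section \ref{sec:iteration-funct}: its one-sided byproduct produces a nonzero $\psi_+\in L^1\cap\mathcal{H}_2$ with $\widehat{\psi_+}$ supported in $\overline{\R_+}$ and $\widehat{\psi_+}|_A = T\widehat{\psi_+}|_B = 0$. Then $E\psi_+$ is continuous, vanishes on $\Lambda_{\bf A,B}$, yet cannot vanish throughout $\R_{\ge 0}\times\R_{\le 0}$: otherwise restricting to the axes would force $\widehat{\psi_+}|_{\R_{\ge 0}} \equiv 0$, incompatible with $\supp \widehat{\psi_+}\subset\overline{\R_+}$ and $\psi_+\not\equiv 0$. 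Continuity then makes $\{E\psi_+\neq 0\}\cap(\R_{\ge 0}\times\R_{\le 0})$ open and non-empty in the quadrant, hence infinite. The most delicate point I anticipate is the critical-case endpoint analysis in Part (I): propagating the rigid sinusoidal structure along a single half-line requires enough $C^1$-regularity at $0$ and enough decay at infinity for the endpoint-derivative cascade both to initiate and to sustain itself, which is exactly what pins the statement to $\mathcal{H}_2$ rather than $\mathcal{H}_0$ at the endpoint.
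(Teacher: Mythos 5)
Your argument is correct and structurally parallel to the paper's, but two of its ingredients are implemented differently, and the comparison is instructive. Where you split $\theta=\theta_++\theta_-$ by the sign of the argument and invoke Proposition \ref{prop:properties-T}(III), the paper replaces $\psi$ by $\tilde\psi=\tfrac12(\psi-i\mathbf{H}\psi)$ and verifies by hand, via the identity $\mathbf{H}\psi(1/t)=-t^2\mathbf{H}\varphi(t)$ in \eqref{eq:Hilbert-invariance}, that the axis vanishings survive the projection. These are the same operation seen on opposite sides of the Fourier transform ($\widehat{\tilde\psi}=\theta\cdot 1_{\R_{\ge0}}$), and your framing is cleaner; the price is that you must still check that $\theta_\pm$ remain admissible for $T$ (they do: $\theta\in L^2\cap H^1$ with $\theta(0)=0$ implies the truncations lie in $L^2\cap H^1\cap L^2(dt/|t|)$, and this is precisely the content of the paper's Hilbert-transform computation). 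The second genuine difference is the last step: the paper propagates the two vanishing half-axes to the full quadrant by citing \cite[Proposition~1.7.2]{Hedenmalm-Monte-Klein-Gordon}, whereas your Goursat/Volterra identity
\[
E\psi(\xi_1,\xi_2)-E\psi(\xi_1,0)-E\psi(0,\xi_2)+E\psi(0,0)=-\pi^2\int_0^{\xi_1}\!\!\int_{\xi_2}^{0}E\psi(s,t)\,dt\,ds
\]
plus Picard iteration is a self-contained proof of that proposition; note only that the Fubini step behind the identity needs $\int|\psi(t)|\min(|t|,1)\,dt<\infty$ rather than $\psi\in L^1$, which is available once Step~1 of the Poincar\'e--Wirtinger argument has delivered $\psi\in L^2((1+t^2)\,dt)$. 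The critical case and Part (II) follow the paper exactly (rigidity of the equality case plus the $C_0$ contradiction; the one-sided functions $F_\pm$ of Section \ref{sec:iteration-funct}), and your explicit support argument for why $E\psi_+$ cannot vanish on the whole quadrant fills in a step the paper leaves implicit.

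Two small caveats. First, in Part (II) you assert $\psi_+\in L^1\cap\mathcal{H}_2$, but the construction only directly controls $F_\pm\in L^2\cap H^1$, i.e.\ $\psi_\pm\in\mathcal{H}_1$; membership in the smaller class $\mathcal{H}_2$ (needed because $\mathrm{zclos}_2^{2}$ is defined through $\mathcal{H}_2$) requires an extra derivative bound that neither you nor the paper verifies explicitly. Second, the one-sided Poincar\'e--Wirtinger chain silently assumes that the gap between the origin and the nearest point of $A$ (resp.\ $B$) is also at most $\alpha$ (resp.\ $\beta$); the anchor $\theta(0)=0$ you mention supplies the boundary zero, but not the length bound on that first interval. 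Both points are inherited from the paper's own exposition rather than introduced by your argument.
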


\begin{proof} We start with the case $\alpha \beta < 1$ in Part (I). Without loss of generality, $\alpha = \beta < 1$. Let $\psi \in \mathcal{H}_0$ be a function such that
\begin{equation}\label{eq:vanishing-partial-cross} 
E\psi(a_n,0) = E\psi(0,b_k) = 0, \, \forall \, n, k \in \N. 
\end{equation}
The crucial observation we shall use throughout is that we may \emph{replace} $\psi$ by $\tilde{\psi}(t) = \frac{\psi - i \mathbf{H}\psi}{2}$ so that \eqref{eq:vanishing-partial-cross} still holds for $\tilde{\psi}$, where $\mathbf{H}\psi(t)$ denotes the \emph{Hilbert transform} of $\psi$ at $t$, defined as 
\begin{equation}\label{eq:def-Hilbert-transform}
\mathbf{H}\psi(t) = \frac{1}{\pi} \text{p.v.} \int_{\R} \frac{\psi(x)}{t-x} \, dx.
\end{equation}
Indeed, note first of all that $\tilde{\psi} \in L^2$. Moreover, if we simply use the property that 
\[
\widehat{\mathbf{H}\psi}(\xi) = -i \sgn(\xi) \widehat{\psi}(\xi),
\]
we obtain that
\[
E\tilde{\psi}(a_n,0) = \frac{1}{2} \left( \int_{\R} \psi(t) e^{-\pi i t a_n} - i \int_{\R} \mathbf{H}\psi(t)\, e^{-\pi i t a_n} \, dt\right) = \theta(a_n) = 0, \, \forall n \ge 0. 
\]
For the second equality, we need the following observation: 
\begin{align}\label{eq:Hilbert-invariance}
\mathbf{H}\psi(1/t) &= \frac{1}{\pi} \text{p.v.} \int_{\R} \frac{\psi(x)}{\frac{1}{t}-x} \, dx = \frac{t}{\pi} \text{p.v.} \int_{\R} \frac{\psi(x)}{1-t \cdot x} \, dx \cr 
& = \frac{t^2}{\pi} \, \text{p.v.} \int_{\R} \frac{\psi(x) \cdot x}{1-t \cdot x} \, dx + \frac{t}{\pi} \int \psi = \frac{t^2}{\pi} \text{p.v.} \int_{\R} \frac{\psi(1/s)\cdot (1/s) }{1- \frac{t}{s}} \frac{ds}{s^2} \cr 
& = -\frac{t^2}{\pi} \text{p.v.} \int_{\R} \left( \frac{1}{s^2} \psi\left( \frac{1}{s}\right) \right) \, \frac{ds}{t-s} = - t^2 \mathbf{H}\varphi(t), 
\end{align} 
where $\varphi(s) = \psi(1/s) \cdot (1/s^2)$, and where we used that $\int \psi = 0$. Hence, 
\begin{align*}
    \int_{\R} \frac{1}{t^2} \mathbf{H}\psi(1/t) e^{- \pi i b_n t} \, dt = - \int_{\R} \mathbf{H}\varphi(t) \, e^{-\pi i b_n t} \, dt, 
\end{align*}
and thus we obtain $\widehat{\mathbf{H}\varphi} \in C(\R \setminus \{0\})$, and also 
\[
E\tilde{\psi}(0,b_n) = \frac{1}{2} \left( \int_{\R} \varphi(t) e^{-\pi i b_n t}\, dt + i \int_{\R} \mathbf{H}\varphi(t)  e^{-\pi i b_n t} \, dt \right) = T\theta(b_n) = 0, \, \forall \, n \ge 0,
\]
again by the properties of the Hilbert transform, as $b_n < 0$ for all $n$. Hence, we may suppose that $\tilde{\psi}$ is of the form above in what follows, and as a consequence we may always assume that $\widehat{\psi}$ is supported on $\R_{\ge 0}$. 

The rest of the proof is similar to that of Theorem \ref{thm:improved}: we start by once more observing that the Poincar\'e-Wirtinger argument of \eqref{eq:poincare-prelim} yields again that $\widehat{\varphi} \in L^2(\R)$, and hence $\psi \in L^2(x^2 \, dx)$. Applying now the same argument as in \eqref{eq:P-W-chain}, we have 
\begin{equation}\label{eq:P-W-bound-one-side} 
\int_{\R} |x|^2 |\psi(x)|^2\, dx = \int_{\R} \left| \frac{\widehat{\psi}'(x)}{\pi}\right|^2 \, dx = \int_0^{\infty} \left| \frac{\widehat{\psi}'(x)}{\pi}\right|^2 \, dx \ge \int_0^{\infty} |\widehat{\psi}(x)|^2 \, dx = \int_{\R} |\psi(x)|^2 \, dx. 
\end{equation} 
On the other hand, the same argument can be applied to $\varphi$ by \eqref{eq:Hilbert-invariance}: indeed, we have that $\widehat{\varphi} = T\theta.$ Since if $\supp(\theta) \subset \R_{\ge 0}$, we have by Proposition \ref{prop:properties-T} that $\supp(T\theta) \subset \R_{\le 0}$, the same Poincar\'e-Wirtinger argument works to show that 
\begin{equation}\label{eq:P-W-bound-other-side}
\int_{\R} |x|^2 |\varphi(x)|^2 \, dx \ge \int_{\R} |\varphi(x)|^2 \, dx.
\end{equation}
We then must have equality in the inequalities \eqref{eq:P-W-bound-one-side} and \eqref{eq:P-W-bound-other-side}, which implies equality in \eqref{eq:P-W-chain}. If $\alpha = \beta < 1$, this is only possible if $\psi \equiv 0$ whenever $\psi $ has Fourier support on $\R_{\ge 0}$. Equivalently, undoing the change $\psi \mapsto \tilde{\psi},$ we have that 
\[
\int_{\R} \psi(t) e^{\pi i \eta/t} \, dt =  \int_{\R} \psi(t) e^{-\pi i t \xi} \, dt = 0, \, \forall \, \xi, \eta \ge 0. 
\]
By \cite[Proposition~1.7.2]{Hedenmalm-Monte-Klein-Gordon}, we conclude that 
\[
\int_{\R} \psi(t) e^{\pi i (\eta/t - \xi t)} \, dt = 0, \, \forall \xi,\eta \ge 0, 
\]
which is the desired claim in that case. For the $\alpha \beta = 1$ case, we repeat the analysis of Section \ref{sec:neg} for the $\alpha \beta = 1$ case verbatim for $\tilde{\psi}$, which again allows us to conclude that $\psi - i \mathbf{H} \psi \equiv 0 \equiv \varphi + i \mathbf{H}\varphi$. By the considerations above, this implies the desired result also in that result. \\

Now, for Part (II) of Theorem \ref{thm:one-side}, we simply note that the functions $\psi_{\pm}$ built in Section \ref{sec:iteration-funct} satisfy exactly our assumptions, where we extend the sets $A$ and $B$ to the whole real line so that they satisfy the same uniform discreteness hypotheses. Hence, since $\psi_{\pm} \not\equiv 0$, it follows that the inclusion $\text{zclos}_2(\Lambda_{\bf A,B}) \subset \R_{\ge 0} \times \R_{\le 0}$ is strict, and since the functions $\psi_{\pm}$ are not identically zero, the complement of $\text{zclos}_2(\Lambda_{\bf A,B})$ in $\R_{\ge 0} \times \R_{\le 0}$ must necessarily contain infinitely many points, as desired. 
\end{proof}

\begin{remark} The argument above shows that the exact same result as Theorem \ref{thm:one-side} holds if $A \subset \R_{\le 0}, \, B \subset \R_{\ge 0}$, under the same hypotheses on the distribution of the difference of consecutive terms. 
\end{remark}

\subsection{The critical case of Theorem \ref{thm:improved}}\label{sec:alt-psi-proof} We now explain a different way to obtain the critical case $\alpha \beta = 1$ of Part (I) of Theorem \ref{thm:improved}, under several different sets of assumptions. We formulate it as a result of its own, which includes in its statement Theorem \ref{thm:L^2-HMR}. 

\begin{theorem} Let $A$ and $B$ be as in Theorem \ref{thm:improved}. The following assertions hold: 
\begin{enumerate}[\normalfont(I)]
    \item Let 
    \[
    \mathcal{C} = \left\{ \psi \in \mathcal{H}_0(\R) \colon \int_{\R} |\widehat{\psi}(x)| |x|^{1/4} \, dx < +\infty \right\}.
    \]
    Then, if $\alpha \beta = 1$, $(\Gamma,\Lambda_{\bf A,B})$ is a $\mathcal{C}$-\emph{H.U.P.}

    \vspace{2mm}
    
    \item If $A = \{\alpha n+\theta\}_{n \in \Z}$ and $B = \{\beta n\}_{n \in \Z}$, then $(\Gamma,\Lambda_{\bf A,B})$ is a $\mathcal{H}_0$-\emph{H.U.P.} if, and only if, $\alpha \beta \le 1$. 
\end{enumerate}
\end{theorem}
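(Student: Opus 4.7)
The plan is to reduce each part to a single critical-case analysis. For $\alpha\beta<1$ both parts follow from Part~(I) of Theorem~\ref{thm:improved}, since $\mathcal{C}\subset\mathcal{H}_0$. For Part~(II) with $\alpha\beta>1$, a rescaled version of Theorem~\ref{thm:negative-gen} produces $\psi\in L^1$ whose Fourier transform lies in $H^1\subset C_0$, placing $\psi$ in $\mathcal{H}_0$ and furnishing the required counterexample. So the real content in both parts is the critical line $\alpha\beta=1$, which after scaling I normalize to $\alpha=\beta=1$. For a nontrivial candidate $\psi$, the Poincar\'e--Wirtinger chain in Step~3 of the proof of Theorem~\ref{thm:improved} uses only $\widehat\psi,\widehat\varphi=T\widehat\psi\in H^{1}(\R)$, which holds here, and its equality case forces
\[
\widehat\psi(x)=t_{n}\sin\!\bigl(\pi(x-a_{n})/(a_{n+1}-a_{n})\bigr)\ \text{on}\ [a_{n},a_{n+1}],
\]
together with the analogous sinusoidal representation for $\widehat\varphi$ on each $[b_{n},b_{n+1}]$; moreover the first inequality in the chain forces $\widehat\psi\equiv 0$ on every gap of length strictly less than $1$, with the same statement for $\widehat\varphi$.

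For Part~(II), where $a_{n}=n+\theta$ and $b_{n}=n$ so every gap has length exactly $1$, the equality case collapses to $\widehat\psi(x)=u_{n}\sin(\pi(x-\theta))$ on $[n+\theta,n+1+\theta]$ and $\widehat\varphi(y)=v_{n}\sin(\pi y)$ on $[n,n+1]$. Fourier inversion then yields the factorization
\[
\psi(t)=\frac{e^{i\pi\theta t}}{\sqrt{2}}\cdot\frac{1+e^{i\pi t}}{\pi(1-t^{2})}\cdot G(t),\qquad G(t)=\sum_{n}(-1)^{n}u_{n}\,e^{i\pi nt},
\]
with $G\in L^{2}_{\mathrm{loc}}$ and $2$-periodic, and an analogous factorization for $\varphi$ through a second $2$-periodic function $\widetilde G$. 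The Hedenmalm--Montes periodization identities $\sum_{k}e^{-2\pi ik\theta}\psi(t+2k)=0$ and $\sum_{k}\varphi(t+2k)=0$ then translate into two functional equations on the circle $\R/2\Z$: the first is automatic from the sinusoidal form, while the second couples $G$ to its values at the inverted arguments $1/(t+2k)$. As announced in the introduction, the discrete group generated by the translation $t\mapsto t+2$ and the inversion $t\mapsto 1/t$ acts with no nonconstant invariants in the relevant class, forcing $G$ to be constant; a direct inspection of the factorization then shows the constant must vanish, since otherwise $\widehat\psi$ would agree with $c\sin(\pi(x-\theta))$ globally on $\R$, contradicting $\widehat\psi\in C_{0}(\R)$.

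For Part~(I) the gaps may vary, so the equality case only supplies the sinusoidal representation $t_{n}\sin(\pi(x-a_{n}))$ on those intervals of length exactly $1$, while $\widehat\psi\equiv 0$ on all other intervals, and symmetrically for $\widehat\varphi$ on the $B$-side. The piecewise form yields $\int_{a_{n}}^{a_{n+1}}|\widehat\psi(x)||x|^{1/4}\,dx\asymp|t_{n}||a_{n}|^{1/4}$ whenever $a_{n+1}-a_{n}=1$, so the hypothesis $\widehat\psi\in L^{1}(|x|^{1/4}dx)$ forces $\sum_{n}|t_{n}||a_{n}|^{1/4}<\infty$, and the same bound on the $B$-side controls $\sum_{n}|s_{n}||b_{n}|^{1/4}$. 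Coupling these two summabilities through the involutivity $T\circ T=\mathrm{Id}$ and the isometry relations of Proposition~\ref{prop:properties-T}---which tie the $L^{2}$ norm of $\widehat\varphi$ to that of $\widehat\psi'$ and vice versa---the pair of amplitudes $(\{t_{n}\},\{s_{n}\})$ is overdetermined; the exponent $1/4$ is tuned precisely so that nontriviality becomes incompatible with both summabilities holding simultaneously under the $T$-coupling, forcing $\psi\equiv 0$.

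The main obstacle in both parts is bypassing the derivative-matching at lattice points used in the $\mathcal{H}_{2}$ treatment of Theorem~\ref{thm:improved}, which leveraged the $C^{1}$ regularity of $\widehat\psi$. In Part~(II) this is replaced by the discrete-group invariance argument on $\R/2\Z$, and in Part~(I) by the tuned weighted integrability. I expect the most delicate technical point to be the quantitative coupling in Part~(I)---verifying that the specific weight $|x|^{1/4}$ suffices and that the induced constraints on $\{t_{n}\}$ and $\{s_{n}\}$, together with the action of $T$, admit no nontrivial simultaneous solution.
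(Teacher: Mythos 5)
Your reductions for $\alpha\beta<1$ and $\alpha\beta>1$ are fine, and your Part~(II) sketch follows the paper's route: from the equality case of Poincar\'e--Wirtinger one gets the explicit sinusoidal/periodic structure, and the two periodicity constraints (in $t$ and in $1/t$) are killed by showing the Fuchsian group they generate admits no nonconstant invariant in the relevant class (the paper does this via the Poisson extension, the fundamental domain of the free discrete group generated by $z\mapsto z+2$ and $z\mapsto z/(1-2z)$, and Liouville on the compact quotient; note the second generator is \emph{not} the inversion $t\mapsto 1/t$, which does not preserve $\C_+$, but the parabolic element coming from ``$\Psi(1/t)$ is $2$-periodic''). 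That step is asserted rather than proved in your write-up, but it is the same strategy and is citable.

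Part~(I), however, has a genuine gap. Your argument there is the sentence that the exponent $1/4$ is ``tuned precisely so that nontriviality becomes incompatible with both summabilities holding simultaneously under the $T$-coupling'' --- but no mechanism is given for why $\sum_n|t_n||a_n|^{1/4}<\infty$ together with the action of $T$ forces $\psi\equiv 0$, and you yourself flag this as the unresolved point. Moreover, the hypothesis $\psi\in\mathcal{C}$ constrains only $\widehat\psi$, so your claim that ``the same bound on the $B$-side controls $\sum_n|s_n||b_n|^{1/4}$'' is unjustified: nothing is assumed about $\int|\widehat\varphi(x)||x|^{1/4}\,dx$. The paper's actual argument is of a different nature: from the equality case one derives the pointwise identity
\[
\sum_{n} t_n\bigl(e^{\pi i t a_{n+1}}+e^{\pi i t a_n}\bigr)=-\sum_{n} r_n\bigl(e^{\pi i b_{n+1}/t}+e^{\pi i b_n/t}\bigr),
\]
takes its Fourier transform as a tempered distribution, and observes that one side is a purely atomic measure (Dirac masses at the lattice points) while the other side becomes a sum of Bessel-function terms $\sqrt{|b|}\,J_1(2\pi\sqrt{|\xi b|})/\sqrt{|\xi|}$; the bound $|J_1(s)|\lesssim s^{-1/2}$ shows this Bessel sum converges absolutely --- hence defines a continuous, atomless function away from the origin --- precisely when the weighted $\ell^1$ condition with exponent $1/4$ holds. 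Matching atoms then forces the telescoping relation $t_{n+1}=-t_n$ for all but one $n$, which is incompatible with $\{t_n\}\in\ell^2$ unless $\psi\equiv 0$. That comparison of a singular distribution with an absolutely convergent Bessel expansion is the key idea your proposal is missing; without it the weighted integrability hypothesis does no work.
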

\vspace{-5mm}
\begin{proof}
Effectively, by the same argument as in the proof in Section \ref{sec:counterex}, we are able to conclude first that, in both cases, $\psi \in L^2((1+x^2)dx).$ Furthermore, we also have that $a_{n+1} - a_n = 1$ whenever $\widehat{\psi} \not\equiv 0$ on $[a_n,a_{n+1}]$. In that case, we may conclude that
\[
\widehat{\psi}(x) = t_n \sin\left( \pi \frac{x-a_n}{a_{n+1}-a_n}\right) = t_n \sin(\pi(x-a_n)) \text{ for } x \in [a_n,a_{n+1}).
\]
That is, 
\begin{equation}\label{eq:psi-hat-disj} 
\widehat{\psi}(x) = \sum_{n \in \Z} t_n \sin (\pi(x-a_n)) 1_{[a_n,a_{n+1}]}.
\end{equation} 
Taking inverse Fourier transforms on both sides, we obtain 
\[
\psi(t) = \sum_{n \in \Z}t_n e^{\pi i t(a_n+1/2)} \widecheck{\left(\cos(\pi (\cdot))1_{(-1/2,1/2)}\right)}(t).
\]
Since we may compute 
\begin{align*} 
\int_{-1/2}^{1/2} \cos(\pi x) e^{\pi i x t} \, dx &= \frac{1}{2i \pi} \left( \frac{e^{\frac{\pi}{2} i(t+1)} - e^{-\frac{\pi}{2} i (t+1)}}{t+1} + \frac{e^{\frac{\pi}{2}i(t-1)} - e^{-\frac{\pi}{2} i (t-1)}}{t-1} \right)  \cr 
    & = \frac{1}{\pi} \left( \frac{\sin\left( \frac{\pi}{2}(t+1)\right)}{t+1} + \frac{\sin\left( \frac{\pi}{2}(t-1)\right)}{t-1}\right) \cr 
    & = \frac{1}{\pi} \cos(\pi t/2) \left( \frac{1}{t+1} - \frac{1}{t-1}\right) = - \frac{2}{\pi} \frac{\cos(\pi t/2)}{t^2 - 1}, 
\end{align*} 
we obtain that 
\begin{equation}\label{eq:phi-one}
    \psi(t) = -\frac{1}{\sqrt{2} \pi(t^2 - 1)} \sum_{n \in \Z} t_n \left( e^{\pi i t a_{n+1}} + e^{\pi i t a_n}\right). 
\end{equation}
Since $\psi \in L^2(\R),$ \eqref{eq:psi-hat-disj} shows that $\{t_n\}_{n \in \Z} \in \ell^2(\Z).$ By the exact same argument applied to $\varphi(t) = t^{-2} \psi(1/t)$, we obtain that also 
\begin{equation}\label{eq:phi-two}
 t^{-2} \psi(1/t) = -\frac{1}{\sqrt{2} \pi(t^2-1)} \sum_{n \in \Z} r_n \left( e^{\pi i t b_{n+1}} + e^{\pi i t b_n} \right),
\end{equation}
with $\{r_n\}_{n \in \Z} \in \ell^2(\Z).$ By combining \eqref{eq:phi-one} and \eqref{eq:phi-two}, we get that 
\begin{equation}\label{eq:identity-poisson}
\sum_{n \in \Z} t_n \left( e^{\pi i t a_{n+1}} + e^{\pi i t a_n}\right) = - \sum_{n \in \Z} r_n \left( e^{\pi i b_{n+1}/t} + e^{\pi i b_n/t} \right), \, \forall \, t \in \R. 
\end{equation}
This last equation will be the main setup for our proofs. 
\vspace{2mm}

\noindent{\bf Part (I):} We first deal with the case of general $\mathcal{C}$-H.U.P.'s. Note that both sides in \eqref{eq:identity-poisson} define a tempered distribution. Hence, taking Fourier transforms on both sides implies that 
\begin{equation}\label{eq:fourier-poisson}
\sum_{ n \in \Z} t_n \left( \delta_{a_{n+1}} + \delta_{a_n}\right) = - \sum_{n \in \Z} r_n \left( \int_{\R} e^{-\pi i t \xi} e^{\pi i b_{n+1}/t} \, dt + \int_{\R} e^{-\pi i t \xi} e^{\pi i b_{n}/t} \, dt\right). 
\end{equation}
Here, the integrals on the right-hand side of \eqref{eq:fourier-poisson} have to be interpreted in the principal value sense. Now, using the results from \cite[Proposition~5.2.1]{Hedenmalm-Monte-Klein-Gordon}, such integrals may be evaluated as follows: 
\begin{equation}\label{eq:fourier-exp1/t}
\lim_{\varepsilon \to 0} \int_{\R} e^{-\pi i \left( t \xi - \eta/t\right)} e^{-\varepsilon |t|} \, dt = 2\pi \cdot \delta_0(|\eta|/|\xi|) - 2 \pi \sqrt{|\eta|/|\xi|} J_1\left(2\pi\sqrt{|\xi \eta|}\right) 1_{\{ \xi \eta < 0\}}. 
\end{equation}
Hence, rewriting \eqref{eq:fourier-poisson} in light of \eqref{eq:fourier-exp1/t}, 
\[
\sum_{ n \in \Z} t_n \left( \delta_{a_{n+1}} + \delta_{a_n}\right)
\]
\begin{equation}\label{eq:bessel-sums}
= \frac{2\pi}{\sqrt{|\xi|}} \sum_{n \in \Z} r_n \left( \sqrt{|b_n|} J_1\left(2\pi \sqrt{|\xi b_n|}\right)1_{\{b_n \xi < 0\}}  + \sqrt{|b_{n+1}|} J_1\left( 2\pi \sqrt{|\xi b_{n+1}|}\right)1_{\{b_{n+1} \xi<0\}}\right).
\end{equation}
Now, since $|J_1(s)| \le C/s^{1/2}$ holds for any $s > 0$, we conclude that, if we know that  
\begin{equation}\label{eq:coeff} 
\sum_{n \in \Z } |r_n| |b_n|^{1/4} < +\infty, 
\end{equation} 
then the right-hand side of \eqref{eq:bessel-sums} converges absolutely for any $\xi \colon |\xi| > 0$. This shows that $t_{n+1} = -t_n$ for all but at most one $n \in \Z$, which is a contradiction to the fact that $\{t_n\}_{n \in \Z} \in \ell^2(\Z),$ unless $\psi \equiv 0$, the desired conclusion which we desired to reach. Since \eqref{eq:coeff} follows if, for instance, 
\[
\int_{\R} |\widehat{\psi}(x)||x|^{1/4} \, dx < + \infty,
\]
we finish this part accordingly. 

\vspace{2mm}

\noindent{\bf Part (II):} We focus, in the remaining part, on the original case of translated lattice crosses considered in \cite{Hedenmalm-Montes-Annals, Hedenmalm-Monte-Klein-Gordon,Giri-Rawat,Giri-Rawat-corrigendum}. We rewrite \eqref{eq:identity-poisson} with $a_n = n+\theta, b_n = n$ as follows: 

\begin{equation}\label{eq:identity-poisson-integ}
e^{i\theta t} \sum_{n \in \Z} t_n \left( e^{\pi i n t} + e^{\pi i (n+1)t}\right) = - \sum_{n \in \Z} r_n \left( e^{\pi i (n+1)/t} + e^{\pi i n /t}\right). 
\end{equation}

Let the absolute value of the left-hand side of \eqref{eq:identity-poisson-integ} be denoted by $\Psi(t)$. This equation tells us thus that $\Psi$ satisfies 
\begin{equation}\label{eq:Psi-prop}
\begin{cases} \Psi(t+2) = \Psi(t) &\text{ for all } t \in \R,\cr 
\Psi\left(\frac{1}{t}\right) = \Psi\left(\frac{1}{t+2}\right)& \text{ for all } t \in \R\setminus\{0,-2\}.
\end{cases}
\end{equation} 
We further note that, by the properties of $\psi$, we must have necessarily $\Psi \in L^2(-1,1)$. We claim that these conditions imply that $\Psi$ is constant. 

Indeed, if not, then fix an arbitrary $\alpha > 0$ such that the set 
\[
E_{\alpha} = \left\{ t \in \R \colon \Psi(t) > \alpha \right\}
\]
satisfies $0<|E_{\alpha} \cap (-1,1)|<2$.  Consider now the function $1_{E_{\alpha}}$. By \eqref{eq:Psi-prop}, $t \in E_{\alpha}$ holds if and only if $t+2 \in E_{\alpha}$, and also $t \in E_{\alpha}$ if and only if $\frac{t}{1-2t} \in E_{\alpha}$. Hence, $1_{E_{\alpha}}$ satisfies the relationships in \eqref{eq:Psi-prop} as well. We now wish to contradict the fact that $0<|E_{\alpha} \cap (-1,1)|<2$, implying that either
$|E_{\alpha}| = 0$ or $|\R \setminus E_{\alpha}| = 0.$ Since $\alpha>0$ is arbitrary, we may conclude that $\Psi$ itself is constant. Hence, replacing $\Psi$ by $1_{E_{\alpha}}$ in the argument below if needed, we may suppose that $\Psi \in L^{\infty}(\R)$. 

The rest of the argument follows the same footsteps as in \cite[Section~5]{Hedenmalm-Montes-Annals}: indeed, while one could simply invoke Proposition 5.1 in \cite{Hedenmalm-Montes-Annals}, we chose to include the argument used for it below for completeness. 

Let then $\tilde{\Psi}:\C_+ \to \C$ denote the Poisson extension of $\Psi$ to the upper half plane. By definition $\tilde{\Psi}$ is a bounded harmonic function on the upper half plane, and a direct computation shows that 
\begin{equation}\label{eq:Psi-prop-ext}
\begin{cases} \tilde{\Psi}(z+2) = \tilde{\Psi}(z) &\text{ for all } z \in \C_+,\cr 
\tilde{\Psi}\left(z\right) = \tilde{\Psi}\left(\frac{z}{1-2z}\right)& \text{ for all } z \in \C_+.
\end{cases}
\end{equation} 
Let $\mathcal{G}$ denote the group of transformations preserving $\C_+$ generated by $z \mapsto z+2$ and $z \mapsto \frac{z}{1-2z}$. By for instance the results in \cite{Gilman-Maskit} and \cite{Beardon}, we deduce that $\mathcal{G}$ is a \emph{discrete and free group}, and a fundamental domain for $\C_+/\mathcal{G}$ is given by 
\[
\mathcal{D} = \left\{ z \in \C_+ \colon |\text{Re}(z)| < 1, \, \left| z -\frac{1}{2}\right| > \frac{1}{2} \text{ and } \left| z+\frac{1}{2}\right| > \frac{1}{2}\right\}. 
\]
It follows that $\tilde{\Psi}$ defines a bounded harmonic function on $\mathcal{D}$, and since the domain $\mathcal{D}$ only has cusps at $0,\infty$ and $\pm 1$, these are removable singularities of $\tilde{\Psi}$ when seen as a harmonic function on the quotient $\C_+ /\mathcal{G}$. Thus, $\tilde{\Psi}$ extends to a harmonic function on the compact Riemann surface induced by $\C_+ / \mathcal{G}$, which, by Liouville's theorem, implies $\tilde{\Psi}$ is constant, which yields in turn that $\Psi$ is constant. 

Finally, note that, given that $\Psi$ is constant, the only viable option is that $\Psi \equiv 0$, since turning to \eqref{eq:phi-one} shows that $\psi(t) = \frac{c}{t^2-1} \in L^2(\R)$, which can only hold if $c = 0$, as desired. 
\end{proof}

\begin{remark} Note that \eqref{eq:coeff} is fundamentally different from the $\mathcal{H}_2$ condition of Theorem \ref{thm:improved}. Indeed, 
the condition of belonging to the class $\mathcal{C}$ may be suitably translated into an assertion on \emph{regularity} of $\psi$. On the other hand, $\psi \in \mathcal{H}_2$ is an assertion on the \emph{decay} of $\psi$. We believe, in light of these comments, that a more careful analysis could lead to a sharper result on uniqueness of distributions solving \eqref{eq:identity-poisson} with square-summable coefficients, possibly by employing techniques from almost periodic functions, as done in \cite{Goncalves-Crystalline}. 
\end{remark} 

\section*{Acknowledgements}

We thank Mateus Sousa and Felipe Gon\c calves for several discussions that led to the investigation of the main results in this manuscript. 

D.R. acknowledges funding by the European Union (ERC, FourIntExP, 101078782). Views and opinions expressed are those of the author(s) only and do not necessarily reflect those of the European Union or European Research Council (ERC). Neither the European Union nor ERC can be held responsible for them.

\nocite{Goncalves-Crystalline}

\bibliography{main}
\bibliographystyle{amsplain}
\end{document}